
\documentclass[titlepage,12pt]{article} 
\usepackage{hyperref}
\usepackage{amssymb,amsthm,amsmath} 
\usepackage[a4paper]{geometry}
\usepackage{datetime2}
\usepackage[utf8]{inputenc}
\usepackage[italian,english]{babel}
	
\selectlanguage{english}


\geometry{text={15.7 cm, 22.7 cm},centering,includefoot}



\date{}


%
%
%


%


\newcommand{\ep}{\varepsilon}
\newcommand{\re}{\mathbb{R}}

\newcommand{\n}{\mathbb{N}}
\newcommand{\z}{\mathbb{Z}}

\newcommand{\holder}{H\"older}
\newcommand{\hatU}{\widehat{U}}
\newcommand{\hatf}{f_{*}}
\newcommand{\hatphi}{\widehat{\varphi}}
\newcommand{\meas}{\operatorname{meas}}
\renewcommand{\div}{\operatorname{div}}
\newcommand{\omegalim}{\operatorname*{\omega\,--\,lim}}
\newcommand{\FF}{\mathcal{F}}
\newcommand{\CC}{\mathcal{C}}
\newcommand{\GG}{\mathcal{G}}
\newcommand{\VV}{\mathcal{V}}
\newcommand{\XX}{\mathbb{X}}
\newcommand{\distF}{\operatorname{dist}_{\FF}}
\newcommand{\dist}{\operatorname{dist}}


\newtheorem{thm}{Theorem}[section]
\newtheorem{thmbibl}{Theorem}

\newtheorem{rmk}[thm]{Remark}

\newtheorem{defn}[thm]{Definition}

\newtheorem{lemma}[thm]{Lemma}
\newtheorem{open}{Open problem}

 
\title{Residual pathologies}

\author{Marina Ghisi\vspace{1ex}\\ 
{\normalsize Universit\`a degli Studi di Pisa} \\
{\normalsize Dipartimento di Matematica}\\ 
{\normalsize PISA (Italy)}\\
{\normalsize e-mail: \texttt{marina.ghisi@unipi.it}}
\and
Massimo Gobbino\vspace{1ex}\\ 
{\normalsize Universit\`a degli Studi di Pisa} \\
{\normalsize Dipartimento di Ingegneria Civile e Industriale}\\ 
{\normalsize PISA (Italy)}\\  
{\normalsize e-mail: \texttt{massimo.gobbino@unipi.it}}
}


\begin{document}
\maketitle

\begin{abstract}

Several counterexamples in analysis show the existence of some special object with some sort of pathological behavior. We present three different examples where the pathological behavior is not an isolated exception, but it is the ``typical'' behavior of the ``generic'' object in a suitable class, where here generic means residual in the sense of Baire category.

The first example is the revisitation of a classical result concerning approximate differentiation. The second example is the derivative loss for solutions to linear wave equations with time-dependent \holder\ continuous propagation speed. The third result is the derivative loss for solutions to transport equations with non-Lipschitz velocity field.

\vspace{6ex}

\noindent{\bf Mathematics Subject Classification 2010 (MSC2010):} 
26A24, 35L15, 35Q35, 76F25.


\vspace{6ex}

\noindent{\bf Key words:} Baire category, residual set, approximately differentiable function, \holder\ continuity, Gevrey spaces, ultradistributions, derivative loss, wave equation, transport equation.

\end{abstract}

 
\section{Introduction}

In this paper we consider three different pathologies in analysis (we refer to the following sections for more precise statements and references).

\begin{enumerate}

\item  There exists a nonincreasing function $f:\re\to\re$ of class $C^{1}$, whose derivative $f'$ is \holder\ continuous and approximately differentiable almost everywhere, and such that for every $g:\re\to\re$ of class $C^{2}$ the set $\{x\in\re:f(x)=g(x)\}$ has Lebesgue measure equal to zero.

\item  There exists a \holder\ continuous function $c:[0,+\infty)\to[1,2]$ such that the wave equation $u_{tt}=c(t)u_{xx}$ (for example with Dirichlet boundary conditions in an interval) is not well-posed in $C^{\infty}$, in the sense that there do exist initial data of class $C^{\infty}$ (and actually also in suitable Gevrey spaces) for which the solution (which always exists in a very weak sense) is not even a distribution for all positive times.

\item  There exists a divergence-free vector field $u$ with $W^{1,p}$ regularity (but not Lipschitz continuous) such that the transport equation $\partial_{t}\rho+u\cdot\nabla\rho=0$ admits a solution that is of class $C^{\infty}$ at time $t=0$, but immediately loses all its derivatives for all positive times.

\end{enumerate}

Despite concerning three different topics, the classical constructions of these counterexamples follow a similar strategy that involves two main steps, which we can roughly describe as follows.

\begin{itemize}

\item  In the first step one introduces some sort of basic ingredient. This is in general a smooth object, not a pathological one, but its behavior is border-line in the class of smooth objects, for example in the sense that it saturates some growth or decay estimate. This basic object can be rescaled in order to reproduce the same behavior at different scales that are relevant to the problem.

\item  In the second step one plays with the rescalings of the basic ingredient, and with some sort of iterative procedure (for example a series, a nested construction, a piecewise definition) one produces the required counterexample.

\end{itemize}

Finding the right basic ingredient in the first step usually requires clever ideas. Also devising the right iterative procedure for the second step is sometimes tricky, but after that a lot of dirty work is needed in order to settle the parameters, with cumbersome computations and fine estimates (as in the case of the counterexample for the wave equation).

At the end of the day one has produced \emph{just one} pathological object. Moreover, the very clever choice of the basic ingredient, and the extreme care needed in the iteration process, could lead to suspect that the pathological object is the result of some sort of ``perfect storm'', where everything that potentially could go wrong actually went wrong.

In this paper we show that, at least in the three examples quoted above, the pathological behavior is not uncommon, but on the contrary it is the typical behavior of the generic object. Roughly speaking, instead of being clever one could just pick an object at random, and this random object is very likely to exhibit the pathological behavior!

\paragraph{\textmd{\textit{Baire category theorem}}}

The key ingredient in our approach is Baire category theorem, a tool that has been applied in several areas of analysis and geometry in order to produce different classes of objects, either well behaved or pathological. Any list of examples or references is for sure largely incomplete. The first and perhaps best known example is the construction of continuous functions that are nowhere differentiable. In the same spirit, Baire categorical arguments can be used to prove that residually many functions of class $C^{0,\alpha}$ do not have the Lusin property with functions of class $C^{0,\beta}$ for $\beta>\alpha$, or that residually many Lipschitz functions are not differentiable on a given compact set with measure zero (see~\cite{2001-TAMS-BorMooWan} for more recent applications to differentiability theory).

In a different context, Baire category arguments have been used in the construction of both Besicovich sets~\cite{2003-StMa-Korner}, and of approximate isometries in the spirit of the Nash-Kuiper Theorem (see for example~\cite{2015-CMHe-KirSpaSze}). Moreover, they have been largely exploited in the theory of differential inclusions, starting from the pioneering papers~\cite{1980-Lincei-Cellina,1997-ACTA-DacMar}.

Nevertheless, some part of the PDE community is not quite used to the advantages of Baire category theorem for the constructions of examples, and we hope that this paper could contribute to filling this gap. From the technical point of view, the classical approach to the three examples we discuss has the same structure, consisting on an iteration procedure starting from a basic ingredient. Here we exploit again the same basic ingredients and their scaling properties, but now the dirty work of the iteration process is done in the background by the Baire category theorem, leading in some cases to shorter proofs (as in the case of the wave equation). We hope that getting rid of the technicalities could help in the construction of even more pathological counterexamples that we mention as open problems in each section. 

\paragraph{\textmd{\textit{Overview of the technique -- Solution to a simple exercise}}}

Instead of an abstract description, we present our strategy by applying it to a standard exercise, where we can show in a few lines the main path (functional setting, quantitative formulation, regularization of the center, and rescaled use of some basic ingredient) that we are going to pursue throughout the paper.

Let us prove that there exists a function $f:\re\to\re$ that is \holder\ continuous of order $1/2$ in $\re$, but it is not Lipschitz continuous in any interval $(a,b)\subseteq\re$.

\begin{itemize}

\item \emph{Functional setting.} We fix a real number $H>0$, and we consider the space $\XX$ of all bounded functions $f:\re\to\re$ such that
\begin{equation}
|f(y)-f(x)|\leq H|y-x|^{1/2}
\qquad
\forall(x,y)\in\re^{2}.
\label{hp:basic-holder}
\end{equation}

One can show that $\XX$ is a complete metric space with respect to the usual distance (the $L^{\infty}$ norm of the difference).

\item \emph{Qualitative vs quantitative ``non-pathological'' behavior.} Let $\CC$ denote the set of all $f\in\XX$ that are ``non-counterexamples'', meaning that $f$ is Lipschitz continuous in some interval $(a,b)\subseteq\re$. This is a qualitative property, because we do not know where $f$ is Lipschitz continuous, and with which constant. Let us make it quantitative. For every positive integer $k$, we consider the set $\CC_{k}$ of all $f\in\XX$ for which there exists $z\in[-k,k]$ (depending on $f$) such that
\begin{equation}
|f(y)-f(x)|\leq k|y-x|
\qquad
\forall(x,y)\in[z,z+1/k]^{2}.
\nonumber
\end{equation}

Now we have a more quantitative information on the location and the length of the interval where $f$ is Lipschitz continuous, and on the Lipschitz constant. 

One can show that, for every positive integer $k$, the set $\CC_{k}$ is closed in $\XX$, and the union of all $\CC_{k}$'s is $\CC$.

\item  \emph{Empty interior.} We need to check that $C_{k}$ has empty interior. If this is not the case, then there exist a positive integer $k_{0}$, a function $f_{0}\in\XX$, and a real number $\ep_{0}>0$ such that $B_{\XX}(f_{0},\ep_{0})\subseteq\CC_{k_{0}}$, where $B_{\XX}(f_{0},\ep_{0})$ denotes the ball in $\XX$ with center in $f_{0}$ and radius $\ep_{0}$. Now we show in three steps that this is absurd.

\begin{itemize}

\item  \emph{Basic ingredient.} Let $\varphi:\re\to\re$ be the function with period~1 such that $\varphi(x)=|x|$ for every $x\in[-1/2,1/2]$. This function is globally Lipschitz continuous with constant~1, and globally \holder\ continuous of order $1/2$ with some constant $H_{\varphi}$. The function $\varphi$ is border-line in the sense that it has the same Lipschitz constant in every interval $(a,b)\subseteq\re$. More important, for every positive integer $n$ the rescaled function $n^{-1}\varphi(n^{2}x)$ is Lipschitz continuous with constant $n$ in every interval, and it is globally \holder\ continuous of order 1/2 with the same constant $H_{\varphi}$, independent of $n$.

\item  \emph{Regularization of the center.} Up to modifying $f_{0}$ and restricting $\ep_{0}$, we can always assume that $f_{0}$ is Lipschitz continuous in $\re$ with some constant $L_{0}$ (and also of class $C^{\infty}$, but we do not need it). We can also assume that $f_{0}$ does not saturate inequality (\ref{hp:basic-holder}), in the sense that there exists $\ep_{1}\in(0,1)$ such that
\begin{equation}
|f_{0}(y)-f_{0}(x)|\leq (1-\ep_{1})H|y-x|^{1/2}
\qquad
\forall(x,y)\in\re^{2}.
\nonumber
\end{equation}

\item  \emph{Final contradiction.} Now that $f_{0}$ has been modified as above, we can set
\begin{equation}
f_{n}(x):=f_{0}(x)+\frac{H\ep_{1}}{H_{\varphi}}\cdot\frac{1}{n}\varphi(n^{2}x)
\qquad
\forall x\in\re,
\nonumber
\end{equation}
and check that, for $n$ large enough, $f_{n}\in B_{\XX}(f_{0},\ep_{0})$ (just note that $f_{n}$ is \holder\ continuous of order $1/2$ with constant $H$, and $f_{n}\to f_{0}$ uniformly), but $f_{n}\not\in\CC_{k_{0}}$ (the Lipschitz constant of $f_{n}$ in any fixed interval blows up with $n$). This provides the required contradiction.

\end{itemize}

\end{itemize}

This strategy provides a residual set of counterexamples. It suggests also that an explicit example can be cooked up by considering the function
\begin{equation}
f(x):=\sum_{n=1}^{\infty}\frac{1}{a_{n}}\varphi(b_{n}x)
\qquad
\forall x\in\re,
\nonumber
\end{equation}
where $\left\{a_{n}\right\}$ and $\left\{b_{n}\right\}$ are two sequences of positive real numbers with fast enough growth.

\paragraph{\textmd{\textit{Structure of the paper}}}

In the following sections of this paper we examine the three examples, according to the chronological order of the original papers. The three sections can be read independently. We tried to keep them as self-contained as possible, but of course some familiarity with the original paper is useful in each case. In section~\ref{sec:kohn} we consider the problem concerning approximate differentiation. In section~\ref{sec:dgcs} we consider the derivative loss for the wave equation with \holder\ continuous propagation speed. In section~\ref{sec:alberti} we consider the derivative loss for the transport equation with non-Lipschitz velocity field. 


\setcounter{equation}{0}
\section{Approximate differentiation}\label{sec:kohn}

\subsection{Statement of the problem and previous result}

\paragraph{\textmd{\textit{General setting}}}

For every pair of functions $f:\re\to\re$ and $g:\re\to\re$, let
\begin{equation}
Z(f,g):=\{x\in\re: f(x)=g(x)\}
\label{defn:Z}
\end{equation}
denote the set where they coincide. Let us assume that $f$ is of class $C^{k}$, and that its derivative $f^{(k)}$ of order $k$ is differentiable at almost every $x\in\re$, or at least it satisfies the weaker condition
\begin{equation}
\limsup_{h\to 0}\frac{|f^{(k)}(x+h)-f^{(k)}(x)|}{|h|}<+\infty
\qquad
\mbox{for almost every $x\in\re$.}
\label{hp:limsup}
\end{equation}

Then it is possible to show (see~\cite[3.1.15]{federer:GMT}) that $f$ coincides with a function of class $C^{k+1}$ up to an arbitrarily small set. More precisely, for every $\ep>0$ there exists a function $\varphi_{\ep}:\re\to\re$ of class $C^{k+1}$ such that the set $\re\setminus Z(f,\varphi_{\ep})$ has Lebesgue measure less than $\ep$.

\paragraph{\textmd{\textit{Previous result}}}

In~\cite[3.1.17]{federer:GMT} it was asked whether the same conclusion remains true when differentiability is replaced by approximate differentiability, or the $\limsup$ in (\ref{hp:limsup}) is replaced by the approximate $\limsup$. The answer is positive when $k=0$ (see~\cite[3.1.16]{federer:GMT}), but negative for $k=1$ (and hence also for $k\geq 1$), as shown by the following result.

\begin{thmbibl}[main result in~\cite{kohn}]\label{thmbibl:kohn}

For every real number $\alpha\in(0,1)$ there exists a function $f:\re\to\re$ of class $C^{1}$ satisfying the following four properties:
\begin{enumerate}

\item[\emph{(A1)}] $f'(x)\geq 0$ for every $x\in\re$,

\item[\emph{(A2)}] $f'$ is \holder\ continuous of order $\alpha$ in $\re$,

\item[\emph{(A3)}] $f'$ is approximately differentiable at almost every $x\in\re$,

\item[\emph{(A4)}] for every $g:\re\to\re$ of class $C^{2}$, the coincidence set $Z(f,g)$ defined by (\ref{defn:Z}) has Lebesgue measure equal to~0.

\end{enumerate}

\end{thmbibl}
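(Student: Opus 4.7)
The plan is to apply the Baire category template of the introduction. First I set up the ambient space: let $\mathbb{X}$ be the complete metric space of convex nondecreasing $C^1$ functions $f\colon[0,1]\to\mathbb{R}$ with $f(0)=f'(0)=0$ and $[f']_{C^{0,\alpha}}\leq H$ (for a fixed $H>0$), equipped with the uniform topology restricted to this Hölder-bounded set; here convexity yields $f'\geq 0$ automatically, the Hölder bound is imposed, and the monotonicity of $f'$ forces its classical—hence approximate—differentiability almost everywhere. Each defining condition is closed, so $\mathbb{X}$ is complete, and by construction every $f\in\mathbb{X}$ already satisfies (A1), (A2) and (A3). It remains to show that a residual subset of $\mathbb{X}$ enjoys (A4).

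If $f$ fails (A4), some $g\in C^2$ satisfies $|Z(f,g)|>0$, and Taylor's formula applied to $g$ on $E=Z(f,g)$ gives $|f(x)-f(y)-f'(y)(x-y)|\leq\tfrac12\|g''\|_\infty(x-y)^2$ there. Hence the non-counterexamples are contained in $\bigcup_{k}\mathcal{C}_k$, where
\begin{equation*}
\mathcal{C}_k=\bigl\{f\in\mathbb{X}:\exists E\subseteq[0,1]\text{ measurable},\ |E|\geq 1/k,\ |f(x)-f(y)-f'(y)(x-y)|\leq k(x-y)^2\ \forall x,y\in E\bigr\}.
\end{equation*}
A standard compactness argument (extracting a weak-$*$ accumulation point in $L^{\infty}$ of the characteristic functions of the witnessing sets $E$) shows each $\mathcal{C}_k$ is closed in $\mathbb{X}$, so by Baire it suffices to prove each $\mathcal{C}_k$ has empty interior.

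The basic ingredient is $\varphi(x)=x_+^{1+\alpha}/(1+\alpha)$: it is convex, its derivative $\varphi'(x)=x_+^{\alpha}$ is nondecreasing and Hölder of order exactly $\alpha$ (saturating the Hölder inequality at the origin), and consequently the second-order incremental ratio of $\varphi$ near $0$ is of order $y^{\alpha-1}$ and blows up. The rescaled copy $\varphi_n(x)=n^{-(1+\alpha)}\varphi(nx)$ has the same Hölder seminorm on its derivative as $\varphi$ itself, while the $L^{\infty}$ norm of $\varphi_n'$ is $O(n^{-\alpha})$. Given $f_0\in\mathcal{C}_k$ and $\ep>0$, I first regularize $f_0$ as in the exercise of the introduction so that $f_0$ is smooth and its Hölder seminorm is strictly less than $H$, and then add a convex perturbation $\Psi_{\nu}$ obtained by packing rescaled copies of $\varphi$ at many scales and locations in $[0,1]$ via a dyadic Cantor-type arrangement, tuned so that $f_{\nu}=f_0+\Psi_{\nu}$ lies in $B_{\mathbb{X}}(f_0,\ep)$ and so that every measurable $E\subseteq[0,1]$ with $|E|\geq 1/k$ contains a two-sided density point arbitrarily close to some packed spike at a sufficiently fine scale, at which the second-order ratio of $f_{\nu}$ with a nearby $x\in E$ exceeds $k$—contradicting $f_{\nu}\in\mathcal{C}_k$.

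The main obstacle is the multi-scale packing underlying $\Psi_{\nu}$: the requirement that $\Psi_{\nu}$ be convex (needed so that $f_{\nu}\in\mathbb{X}$ and hence (A3) is preserved) caps the total mass of the Radon measure $\Psi_{\nu}''$ by $\|\Psi_{\nu}'\|_{\infty}$, which must stay small so that $f_{\nu}$ remains close to $f_0$, while at the same time the spikes must cover enough of $[0,1]$ that every $E$ of measure at least $1/k$ meets one of them. The border-line scaling of the basic ingredient—the $L^{\infty}$ norm of the derivative decaying as $n^{-\alpha}$ while its Hölder seminorm is preserved—is precisely what allows the competing geometric series of spike heights and of Hölder contributions across dyadic scales to close up. This is the same combinatorial packing that Kohn carries out inductively in the classical proof; Baire's theorem here relieves us of that iteration, requiring only a single layer of perturbation for each pair $(k,\ep)$, and assembling the residual counterexample automatically.
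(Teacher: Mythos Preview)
Your approach has a fatal structural flaw: the ambient space you choose cannot contain \emph{any} function satisfying (A4). You work in a space of convex functions, so that $f'$ is nondecreasing; by Lebesgue's theorem every monotone function is classically differentiable almost everywhere, and hence $f'$ satisfies the limsup condition~(\ref{hp:limsup}) at almost every point. But the paper recalls (just above Theorem~\ref{thmbibl:kohn}, citing~\cite[3.1.15]{federer:GMT}) that whenever $f\in C^{1}$ and $f'$ satisfies~(\ref{hp:limsup}), for every $\varepsilon>0$ there exists $g\in C^{2}$ with $\meas(\re\setminus Z(f,g))<\varepsilon$. Thus every $f$ in your space \emph{automatically} coincides with some $C^{2}$ function on a set of positive measure, and (A4) fails throughout $\mathbb{X}$. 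Your residual set is empty, and the Baire argument cannot even begin: in fact every $\mathcal{C}_{k}$, for $k$ large, is the whole space.

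The entire subtlety of Kohn's example is that one must secure (A3) (approximate differentiability of $f'$) \emph{without} securing the stronger limsup condition~(\ref{hp:limsup}). Convexity gives the latter for free, which is too much. The paper's solution is to encode (A3) quantitatively in a way that is weaker than~(\ref{hp:limsup}) yet still closed under uniform convergence: it fixes an increasing family of closed sets $K_{n}$ of asymptotically full measure and requires $f'$ to be Lipschitz on each $K_{n}$ with a prescribed constant $\Lambda_{n}$ (condition~(\ref{defn:F-lipschitz})). This yields approximate differentiability of $f'$ on $\bigcup K_{n}$ via a Lusin-type argument, while still leaving room---on the small complements $\re\setminus K_{n}$---for the multi-bump perturbations~(\ref{defn:mb}) whose second-order increments blow up. Your convex ``spikes'' based on $x_{+}^{1+\alpha}$ cannot play this role: once $f'$ is monotone the second-order behavior is tame on a set of full measure, which is exactly what (A4) forbids.
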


The previous result was recently quoted (see~\cite[Example~9]{menne:survey}) in order to show that an integral varifold, for which first order quantities such as tangent planes are well defined, in general does not carry any second order information such as curvatures.

\paragraph{\textmd{\textit{The basic ingredient}}}

Let us fix once for all a single-bump function, namely a nonnegative function $\varphi:\re\to[0,+\infty)$ of class $C^{1}$ such that $\varphi(x)=0$ if $x\not\in(-1,1)$, and
\begin{equation}
\int_{-1}^{1}\varphi(x)\,dx=1.
\label{hp:sb-int}
\end{equation}

Then we choose a real number $\beta>1$ (that necessarily exists) such that
\begin{equation}
(\alpha+1)\beta<2,
\label{hp:alpha-beta}
\end{equation}
and for every pair of positive integers $n$ and $k$ we consider the multi-bump function
\begin{equation}
\varphi_{n,k}(x):=\frac{1}{2^{\alpha\beta n}}\sum_{|j|\leq 2^{n}k}\varphi\left(2^{\beta n}\left(x-\frac{j}{2^{n}}\right)\right)
\qquad
\forall x\in\re,
\label{defn:mb}
\end{equation}
and its primitive function
\begin{equation}
\psi_{n,k}(x):=\int_{0}^{x}\varphi_{n,k}(s)\,ds
\qquad
\forall x\in\re.
\label{defn:MB}
\end{equation}

In~\cite{kohn} it was shown that there exist two (rapidly increasing) sequences $\{n_{i}\}$ and $\{a_{i}\}$ of positive integers such that the function
\begin{equation}
\sum_{i=1}^{\infty}\frac{1}{a_{i}}\psi_{n_{i},k}(x)
\nonumber
\end{equation}
provides the required counterexample (the value of $k$ in this case is not relevant, and one could also take $k=+\infty$). In this section we exploit again the basic ingredients (\ref{defn:mb}) and (\ref{defn:MB}), but within our Baire category approach.


\subsection{Choice of the functional space and our result}

The main difficulty in this example consists in giving a structure of complete metric space to a large enough set of approximately differentiable functions. The key point is finding some form of quantitative bound on the approximate differential that is stable with respect to uniform convergence. To this end, we fix first a family of intervals.

\paragraph{\textmd{\textit{A family of intervals}}}

Let $\alpha\in(0,1)$ and $\beta>1$ be two real numbers. For every pair of integers $n$ and $j$, with $n\geq 1$, we consider the interval
\begin{equation}
I_{n,j}:=\left(\frac{j}{2^{n}}-\frac{1}{2^{\beta n}},\frac{j}{2^{n}}+\frac{1}{2^{\beta n}}\right),
\label{def:I-ij}
\end{equation}
and we set
\begin{equation}
U_{n}:=\bigcup_{j\in\z}I_{n,j}.
\label{defn:U-i}
\end{equation}

Since $\beta>1$, when $n$ is large enough the set $U_{n}$ is the disjoint union of countably many open intervals centered in the points of the form $j/2^{n}$. For every positive integer $n$, we consider the open set
\begin{equation}
\hatU_{n}:=\bigcup_{i=n}^{+\infty}U_{i},
\nonumber
\end{equation}
and the complement set
\begin{equation}
K_{n}:=\re\setminus\hatU_{n}.
\nonumber
\end{equation}

It turns out that $\hatU_{n}$ is a dense open set, but the measure of its intersection with any fixed interval tends to~0 as $n\to+\infty$. Indeed, let $[a,b]\subseteq\re$ be a fixed interval. For every positive integer $n$, the interval $[a,b]$ intersects at most $\lfloor(b-a)2^{n}\rfloor+3$ intervals of the form $I_{n,j}$. It follows that
\begin{equation}
\meas\left([a,b]\cap U_{n}\strut\right)\leq \left((b-a)2^{n}+3\strut\right)\frac{2}{2^{\beta n}}
\qquad
\forall n\geq 1,
\nonumber
\end{equation}
where $\meas$ denotes the Lebesgue measure, and as a consequence
\begin{equation}
\meas\left([a,b]\cap\hatU_{n}\right)\leq\sum_{i=n}^{\infty}\left((b-a)2^{i}+3\right)\frac{2}{2^{\beta i}}
\qquad
\forall n\geq 1.
\nonumber
\end{equation}

Since $\beta>1$, the right-hand side tends to 0 as $n\to +\infty$, and therefore 
\begin{equation}
\meas\left([a,b]\setminus\bigcup_{n=1}^{\infty}K_{n}\right)=\lim_{n\to+\infty}\meas\left([a,b]\setminus K_{n}\strut\right)=0.
\label{th:Kn-full}
\end{equation}

\paragraph{\textmd{\textit{Our functional space}}}

Let $C^{1}_{b}(\re)$ denote the set of functions $f:\re\to\re$ such that both $f$ and $f'$ are bounded in $\re$. It is well-known that $C^{1}_{b}(\re)$ is a Banach space with respect to the norm
\begin{equation}
\|f\|_{C^{1}_{b}(\re)}:=\sup_{x\in\re}|f(x)|+\sup_{x\in\re}|f'(x)|.
\nonumber
\end{equation}

Let $\alpha$, $\beta$, $H$ be three real numbers with
\begin{equation}
\alpha\in(0,1),
\qquad\qquad
\beta>1,
\qquad\qquad
H>0.
\label{hp:abc}
\end{equation}

Let $\{K_{n}\}$ be the sequence of closed sets defined above, and let $\{\Lambda_{n}\}$ be an increasing sequence of real numbers such that
\begin{equation}
\Lambda_{n}\geq 2^{(1-\alpha)\beta n}
\qquad
\forall n\in\n.
\label{hp:Ln}
\end{equation}

We consider the space $\XX$ of all functions $f\in C^{1}_{b}(\re)$ whose derivative $f'$ satisfies the following three inequalities:
\begin{gather}
f'(x)\geq 0
\qquad
\forall x\in\re,
\label{defn:F-sign} \\[1ex]
|f'(x)-f'(y)|\leq H|x-y|^{\alpha}
\qquad
\forall (x,y)\in\re^{2},
\label{defn:F-holder}  \\[1ex]
|f'(x)-f'(y)|\leq \Lambda_{n}|x-y|
\qquad
\forall n\geq 1,\ \forall (x,y)\in K_{n}^{2}.
\label{defn:F-lipschitz}
\end{gather}

We point out that in (\ref{defn:F-holder}) the \holder\ continuity of $f'$ is required to be global in $\re$, while in (\ref{defn:F-lipschitz}) the Lipschitz continuity of $f'$ is required only in $K_{n}$, and with a constant that blows up with $n$.

\paragraph{\textmd{\textit{Our result}}}

Since the inequalities in (\ref{defn:F-sign}), (\ref{defn:F-holder}), (\ref{defn:F-lipschitz}) are stable under uniform convergence, the set $\XX$ is closed in $C^{1}_{b}(\re)$, and therefore it is a complete metric space with respect to the distance inherited from the ambient space. Moreover, from (\ref{defn:F-sign}) and (\ref{defn:F-holder}) it follows that every $f\in\XX$ satisfies (A1) and (A2) of Theorem~\ref{thmbibl:kohn}. 

Our result is that all the elements of $\XX$ satisfy also (A3), and that a residual subset satisfies (A4), and also the following (slightly stronger) condition
\begin{enumerate}

\item[(A4-s)]  for every $g:\re\to\re$ of class $C^{1,1}$ (namely of class $C^{1}$ with $g'$ Lipschitz continuous), the coincidence set $Z(f,g)$ has Lebesgue measure equal to~0.

\end{enumerate}

\begin{thm}\label{thm:kohn-residual}

Let $\alpha$, $\beta$, $H$ be real numbers satisfying (\ref{hp:abc}). Let us define as above the sequence of closed sets $\{K_{n}\}$, the sequence or real numbers $\{\Lambda_{n}\}$, and the space $\XX$.

Then the following statements hold true.
\begin{itemize}						

\item  Every $f\in\XX$ satisfies condition (A3) of Theorem~\ref{thmbibl:kohn}.

\item  If in addition (\ref{hp:alpha-beta}) holds true, then the set of all functions $f\in\XX$ that satisfy condition (A4-s) above is residual in~$\XX$.

\end{itemize}

\end{thm}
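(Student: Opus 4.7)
Part~(A3) is deterministic and does not require Baire. For each $n$, since $f'$ is Lipschitz on $K_n$ with constant $\Lambda_n$, I extend $f'|_{K_n}$ to a globally Lipschitz function $F_n:\re\to\re$ via McShane's formula. By Rademacher's theorem $F_n$ is differentiable almost everywhere, and at a point $p$ that is both a Lebesgue density point of $K_n$ and a point of differentiability of $F_n$, the classical derivative $F_n'(p)$ serves as an approximate derivative of $f'$ at $p$, since $f'$ agrees with $F_n$ on the density-one set $K_n$. Almost every point of $K_n$ is of this form, and~(\ref{th:Kn-full}) gives that $\bigcup_n K_n$ has full measure in $\re$, so $f'$ is approximately differentiable almost everywhere.

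For the Baire argument giving (A4-s), I quantify the failure of (A4-s) as $\bigcup_k\CC_k$, where
\[
\CC_k:=\{f\in\XX:\exists\,g\in C^{1,1}(\re)\text{ with }\|g\|_\infty,\|g'\|_\infty,\operatorname{Lip}(g')\leq k\text{ and }|\{x\in[-k,k]:f(x)=g(x)\}|\geq 1/k\}.
\]
Closedness of $\CC_k$ in $\XX$ follows from compactness: given $f_j\to f$ in $\XX$ with witnesses $g_j$, Arzel\`a--Ascoli yields a subsequential $C^1$ limit $g$ of the $g_j$ in the same class, and the measure of the coincidence set is lower semicontinuous under uniform convergence, since $|f-g|\leq\ep$ on $\{f_j=g_j\}$ for all $j$ sufficiently large. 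By Baire's theorem, it suffices to show that each $\CC_{k_0}$ has empty interior in $\XX$.

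Given $f_0\in\XX$ and $\ep_0>0$, I first replace $f_0$ by $\tilde f_0\in\XX$ that is close to $f_0$ in $C^1_b(\re)$ and satisfies (\ref{defn:F-sign})--(\ref{defn:F-lipschitz}) with definite slack (for instance, the damping $\tilde f_0:=(1-\eta)f_0$ works, optionally combined with a mollification to improve the regularity of $\tilde f_0'$). I then set
\[
f_n:=\tilde f_0+c\,\psi_{n,k_0},
\]
with a fixed small $c>0$ depending only on the slack and on the $n$-independent H\"older constant of $\varphi_{n,k_0}$, and let $n$ be large. Membership $f_n\in\XX$ uses the basic invariances of the multi-bump: $\varphi_{n,k_0}\geq 0$ preserves (\ref{defn:F-sign}); the uniform H\"older estimate of $\varphi_{n,k_0}$ together with the slack in $\tilde f_0'$ yields (\ref{defn:F-holder}); and the key geometric identity $\operatorname{supp}\varphi_{n,k_0}\subseteq U_n\subseteq\re\setminus K_n$ forces $f_n'\equiv\tilde f_0'$ on $K_n$, so (\ref{defn:F-lipschitz}) for $f_n$ reduces to the same property of $\tilde f_0$. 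Since $\|\psi_{n,k_0}\|_\infty$ and $\|\varphi_{n,k_0}\|_\infty$ both vanish as $n\to\infty$ (the former thanks to $(\alpha+1)\beta>1$), $f_n\in B_\XX(f_0,\ep_0)$ for all sufficiently large $n$.

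It remains to show $f_n\notin\CC_{k_0}$ for $n$ large, and here the hypothesis $(\alpha+1)\beta<2$ is essential. Suppose by contradiction that $g\in C^{1,1}$ coincides with $f_n$ on a set $E\subseteq[-k_0,k_0]$ of measure at least $1/k_0$; by~(\ref{th:Kn-full}), $E\cap K_n$ still has positive measure for $n$ large. At a Lebesgue density point $p$ of $E\cap K_n$ one has $g'(p)=f_n'(p)=\tilde f_0'(p)$, because $\varphi_{n,k_0}$ vanishes on $K_n$; using the density at $p$ together with the spacing $1/2^n$ of the bumps, I select $q\in E\cap K_n$ at distance of order $1/2^n$ from $p$ such that $(p,q)$ contains a full bump, so that
\[
\int_p^q\varphi_{n,k_0}(s)\,ds\;\geq\;\frac{c_\varphi}{2^{(\alpha+1)\beta n}}.
\]
Subtracting the second-order Taylor expansions of $g$ (quadratic remainder bounded by $k_0|q-p|^2/2$ from $g\in C^{1,1}$) and of $\tilde f_0$ (remainder to be controlled via the restricted Lipschitz bound of $\tilde f_0'$ on $K_n$ on the scale $|q-p|\sim 1/2^n$) from the identity $f_n(q)-f_n(p)=g(q)-g(p)$ should produce an estimate of the form
\[
\frac{c\,c_\varphi}{2^{(\alpha+1)\beta n}}\;\leq\;C\,|q-p|^2\;\leq\;\frac{C'}{2^{2n}},
\]
which contradicts $(\alpha+1)\beta<2$ for $n$ large enough. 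The principal obstacle I foresee is precisely this Taylor estimate for $\tilde f_0$: the global H\"older regularity of $\tilde f_0'$ alone gives only a remainder of order $|q-p|^{1+\alpha}$, which is of the same scale as the left-hand side, so exploiting the restricted Lipschitz bound of $\tilde f_0'$ on $K_n$ (or supplementing the construction with a mollification of $\tilde f_0$ at a carefully chosen scale) to bring the remainder down to the quadratic scale is where the bulk of the technical work must lie.
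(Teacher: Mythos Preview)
Your overall strategy matches the paper's, but there are two genuine gaps, one of which you do not flag.

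\textbf{The verification that $f_n\in\XX$ is incomplete.} Condition~(\ref{defn:F-lipschitz}) must hold on $K_m$ for \emph{every} $m\geq 1$, not just for the single index $n$ used in the perturbation. Your argument ``$\operatorname{supp}\varphi_{n,k_0}\subseteq U_n\subseteq\re\setminus K_n$, so $f_n'\equiv\tilde f_0'$ on $K_n$'' covers only the indices $m\leq n$ (for which $K_m\subseteq K_n$). For $m>n$ the set $K_m$ is strictly larger and meets the support of $\varphi_{n,k_0}$, so you must use the global Lipschitz bound $\operatorname{Lip}(\varphi_{n,k_0})\leq L_\varphi\,2^{(1-\alpha)\beta n}$ together with the growth hypothesis $\Lambda_m\geq 2^{(1-\alpha)\beta m}$ from~(\ref{hp:Ln}) to absorb the perturbation into the slack. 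This is exactly where~(\ref{hp:Ln}) is used in the paper, and without it the construction fails.

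\textbf{The regularization of the center is the second gap, and mollification is not the fix.} You correctly identify that the H\"older bound on $\tilde f_0'$ gives only a remainder of order $|q-p|^{1+\alpha}$, which is fatal. But your proposed cure---mollifying $\tilde f_0$---does not obviously preserve membership in $\XX$: convolution need not respect the family of Lipschitz constraints on the irregular sets $K_m$. The paper instead proves a dedicated approximation lemma: given $f_0\in\XX$ and an interval $[a,b]$, one replaces $f_0'$ on $[a,b]$ by the piecewise-affine extension of $f_0'|_{K_{n_0}\cap[a,b]}$ for a large $n_0$. This extension keeps the H\"older constant, keeps the Lipschitz constant on each $K_m$ (since it agrees with $f_0'$ on $K_{n_0}\supseteq K_m$ for $m\leq n_0$, and is globally $\Lambda_{n_0}$-Lipschitz on $[a,b]$ for $m\geq n_0$), and makes $\tilde f_0'$ honestly Lipschitz on $[-k_0,k_0]$ with some constant $L_0$. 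With that in hand the Taylor remainder is genuinely $O(|q-p|^2)$ and your final inequality goes through.

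A smaller point: your simpler definition of $\CC_k$ (measure of coincidence in $[-k_0,k_0]$ at least $1/k_0$) does not by itself let you ``select $q\in E\cap K_n$ at distance of order $1/2^n$ from $p$''---density-one at $p$ controls only scales $r\to 0$, not the fixed scale $1/2^n$. The paper builds a uniform density bound (measure $\geq(19/10)r$ for all $r<1/k$) into the definition of $\CC_k$ precisely so that this selection works at the scale of the bump spacing.
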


We conclude by mentioning a possible extension and an open problem.

\begin{rmk}
\begin{em}

It should not be difficult to strengthen the results of Theorem~\ref{thmbibl:kohn} and Theorem~\ref{thm:kohn-residual} by asking that $f'$ is \holder\ continuous of \emph{any} order $\alpha\in(0,1)$, and not just with a fixed $\alpha$, or by asking that $f'$ has a given non-Lipschitz continuity modulus, for example of log-Lipschitz type. Such an extension probably requires only a new definition of the intervals $I_{n,j}$, with a radius that decays faster.

\end{em}
\end{rmk}

\begin{open}
\begin{em}

Is it possible to find a counterexample as in Theorem~\ref{thmbibl:kohn}, with the further requirement that $f'$ is approximately differentiable at every $x\in\re$ (and not just almost every)?

\end{em}
\end{open}

We point out that the stronger form stated in the open problem is actually the original question posed in~\cite[3.1.15]{federer:GMT}.


\subsection{Technical preliminaries}

In this subsection we prove some technical results that we need in the proof of Theorem~\ref{thm:kohn-residual}. The first one concerns the $\omega$-limit of a sequence of sets. Let us recall the definition.
Let $\{M_{n}\}$ be a sequence of subsets of $\re$. The $\omega$-limit of the sequence is the set $M_{\infty}$ defined as
\begin{equation}
M_{\infty}:=\bigcap_{n\geq 0}\:\overline{\bigcup_{i\geq n}M_{i}}.
\nonumber
\end{equation}

It is well-know that $M_{\infty}$ can be characterized as the set of points $x\in\re$ for which there exist an increasing sequence $n_{k}$ of positive integers, and a sequence $x_{k}\to x$ such that $x_{k}\in M_{n_{k}}$ for every positive integer $k$.

In particular, if $f_{n}:\re\to\re$ and  $g_{n}:\re\to\re$ are continuous functions, and if $f_{n}\to f_{\infty}$ and $g_{n}\to g_{\infty}$ uniformly in $\re$, then one can check that
\begin{equation}
Z(f_{\infty},g_{\infty})\supseteq\omegalim_{n\to +\infty}Z(f_{n},g_{n}).
\nonumber
\end{equation}

If all the sets $M_{n}$ are measurable and contained in a common compact set, then their $\omega$-limit is not smaller, in the measure theoretic sense, than the elements of the sequence, as shown in the following result.

\begin{lemma}[Measure of the $\omega$-limit set]\label{lemma:omega-lim}

Let $\{M_{n}\}$ be a sequence of measurable subsets of $\re$ contained in a common interval $[a,b]$.

Then it turns out that
\begin{equation}
\meas\left(\omegalim_{n\to +\infty}M_{n}\right)\geq\limsup_{n\to+\infty}\meas(M_{n}).
\nonumber
\end{equation} 

\end{lemma}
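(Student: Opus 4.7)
The plan is to reduce the statement to the standard continuity of Lebesgue measure on a decreasing sequence of measurable sets of finite measure. For every positive integer $n$ I set
\begin{equation}
A_{n}:=\overline{\bigcup_{i\geq n}M_{i}},
\nonumber
\end{equation}
so that by definition the $\omega$-limit set is exactly $M_{\infty}=\bigcap_{n\geq 0}A_{n}$. Since each $M_{i}$ is contained in $[a,b]$, and $[a,b]$ is closed, each $A_{n}$ is a closed (hence measurable) subset of $[a,b]$, and in particular $\meas(A_{n})\leq b-a<+\infty$. Moreover $\{A_{n}\}$ is decreasing by construction.

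The first step is to apply continuity of measure from above for decreasing sequences of measurable sets of finite measure, which gives
\begin{equation}
\meas(M_{\infty})=\meas\left(\bigcap_{n\geq 0}A_{n}\right)=\lim_{n\to+\infty}\meas(A_{n}).
\nonumber
\end{equation}
The second step is to estimate $\meas(A_{n})$ from below by $\meas(M_{k})$ for every $k\geq n$. This is immediate, because $M_{k}\subseteq\bigcup_{i\geq n}M_{i}\subseteq A_{n}$, so monotonicity of the measure yields $\meas(A_{n})\geq\sup_{k\geq n}\meas(M_{k})$.

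Combining the two steps, I obtain
\begin{equation}
\meas(M_{\infty})=\lim_{n\to+\infty}\meas(A_{n})\geq\lim_{n\to+\infty}\sup_{k\geq n}\meas(M_{k})=\limsup_{k\to+\infty}\meas(M_{k}),
\nonumber
\end{equation}
which is the desired inequality. I do not expect any genuine obstacle here: the only subtlety is making sure the measure of $A_{n}$ is finite (this is where the hypothesis that all $M_{n}$ lie in a common interval enters) so that continuity from above is legitimate; without that hypothesis the conclusion would still hold by inner approximation, but the short argument above would require adjustment.
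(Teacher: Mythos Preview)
Your proof is correct and follows essentially the same route as the paper: define the decreasing closed sets $A_{n}$, use continuity of measure from above (legitimate since all $A_{n}$ lie in $[a,b]$), and compare with $\meas(M_{k})$ via the inclusion $M_{k}\subseteq A_{n}$ for $k\geq n$. The only cosmetic difference is that you write out $\sup_{k\geq n}\meas(M_{k})$ explicitly, whereas the paper passes directly from $A_{n}\supseteq M_{n}$ to the $\limsup$.
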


\begin{proof}

Let us set
\begin{equation}
A_{n}:=\overline{\bigcup_{i\geq n}M_{i}}.
\qquad
\forall n\in\n.
\nonumber
\end{equation}

It turns out that $\{A_{n}\}$ is a decreasing sequence of measurable sets with finite measure. Since $A_{n}\supseteq M_{n}$ for every $n\in\n$, and since the $\omega$-limit is the intersection of all $A_{n}$'s, it follows that
\begin{equation}
\meas\left(\omegalim_{n\to +\infty}M_{n}\right)=\lim_{n\to +\infty}\meas(A_{n})\geq\limsup_{n\to+\infty}\meas(M_{n}),
\nonumber
\end{equation}
which completes the proof.
\end{proof}


The second result contains the key properties of the multi-bump functions. Similar estimates are crucial also in~\cite{kohn}.

\begin{lemma}[Properties of the multi-bump function]\label{lemma:multi-bump}

For every pair of positive integers $n$ and $k$, let us consider the multi-bump function $\varphi_{n,k}$ defined in (\ref{defn:mb}), and its antiderivative $\psi_{n,k}$ defined in (\ref{defn:MB}). Let us assume that $n$ is sufficiently large so that
\begin{equation}
\beta n\geq n+2.
\label{hp:mb-large}
\end{equation}

Then the following estimates hold true.

\begin{itemize}

\item (Support) The function $\varphi_{n,k}(x)$ vanishes when $|x|\geq k+1$ and outside the set $U_{n}$ defined in (\ref{defn:U-i}), and in particular
\begin{equation}
\varphi_{n,k}(x)=0
\qquad
\forall x\in K_{n}.
\label{th:mb-support}
\end{equation}

\item  (Pointwise estimates on $\varphi_{n,k}$ and $\psi_{n,k}$) There exists a constant $M_{\varphi}$ such that
\begin{equation}
|\varphi_{n,k}(x)|\leq\frac{M_{\varphi}}{2^{\alpha\beta n}}
\qquad
\forall x\in\re
\label{th:mb-phi-infty}
\end{equation}
and
\begin{equation}
|\psi_{n,k}(x)|\leq\frac{(k+1)M_{\varphi}}{2^{\alpha\beta n}}
\qquad
\forall x\in\re.
\label{th:mb-psi-infty}
\end{equation}

\item  (Lipschitz constant) There exists a constant $L_{\varphi}$ such that
\begin{equation}
|\varphi_{n,k}(y)-\varphi_{n,k}(x)|\leq 2^{(1-\alpha)\beta n}L_{\varphi}|y-x|
\qquad
\forall(x,y)\in\re^{2}.
\label{th:mb-lip}
\end{equation}

\item  (\holder\ constant)  There exists a constant $H_{\varphi}$ such that
\begin{equation}
|\varphi_{n,k}(y)-\varphi_{n,k}(x)|\leq H_{\varphi}|y-x|^{\alpha}
\qquad
\forall(x,y)\in\re^{2}.
\label{th:mb-holder}
\end{equation}

\item  (Gap estimate)
If $x$ and $y$ are real numbers in $[-k,k]$ such that $y-x\geq 3/2^{n}$, then
\begin{equation}
\psi_{n,k}(y)-\psi_{n,k}(x)\geq\frac{1}{2^{(\alpha+1)\beta n}}.
\label{th:mb-gap}
\end{equation}

\end{itemize}

\end{lemma}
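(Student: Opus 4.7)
The plan is to exploit two structural consequences of hypothesis (\ref{hp:mb-large}): the radius $2^{-\beta n}$ of each $I_{n,j}$ is at most a quarter of the spacing $2^{-n}$ between centers, so the supports of the bumps summed in (\ref{defn:mb}) are pairwise disjoint; and because $\varphi\in C^{1}(\re)$ vanishes outside $(-1,1)$, both $\varphi$ and $\varphi'$ are zero at $\pm 1$, so $\varphi_{n,k}$ is globally of class $C^{1}$. Each of the five estimates then reduces to a single-bump calculation.

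For the support (\ref{th:mb-support}), the $j$-th term is nonzero only on $I_{n,j}$, and for $|j|\leq 2^{n}k$ these intervals lie inside $[-(k+1),k+1]$, so $\varphi_{n,k}$ vanishes outside $U_{n}\cap[-(k+1),k+1]$, in particular on $K_{n}$. Disjointness of supports and (\ref{defn:mb}) immediately give (\ref{th:mb-phi-infty}) with $M_{\varphi}:=2\|\varphi\|_{\infty}$; integrating this pointwise bound over the support (whose length is at most $2(k+1)$) yields (\ref{th:mb-psi-infty}).

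For the Lipschitz estimate (\ref{th:mb-lip}), at each point only one summand in $\varphi_{n,k}'$ is nonzero and is bounded in absolute value by $2^{(1-\alpha)\beta n}\|\varphi'\|_{\infty}$, so the mean value theorem gives the claim with $L_{\varphi}:=\|\varphi'\|_{\infty}$. The \holder\ estimate (\ref{th:mb-holder}) follows by interpolating at the threshold $|y-x|=2^{-\beta n}$: the Lipschitz bound handles $|y-x|\leq 2^{-\beta n}$, where $|y-x|^{1-\alpha}\leq 2^{-(1-\alpha)\beta n}$; the sup bound (\ref{th:mb-phi-infty}) handles $|y-x|\geq 2^{-\beta n}$, where $2^{-\alpha\beta n}\leq|y-x|^{\alpha}$. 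One can take $H_{\varphi}:=\max\{2M_{\varphi},L_{\varphi}\}$.

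The only step that needs real care is the gap estimate (\ref{th:mb-gap}). The plan is to produce a single integer $j_{0}$ with $|j_{0}|\leq 2^{n}k$ and $I_{n,j_{0}}\subseteq[x,y]$; then nonnegativity of $\varphi_{n,k}$ and the normalization (\ref{hp:sb-int}) yield $\psi_{n,k}(y)-\psi_{n,k}(x)\geq 2^{-\alpha\beta n}\cdot 2^{-\beta n}=2^{-(\alpha+1)\beta n}$. To produce $j_{0}$, I would take the smallest integer with $j_{0}/2^{n}\geq x+2^{-\beta n}$: minimality gives $(j_{0}-1)/2^{n}<x+2^{-\beta n}$, hence $j_{0}/2^{n}+2^{-\beta n}<x+2^{-n}+2\cdot 2^{-\beta n}\leq x+3\cdot 2^{-n}\leq y$, where the penultimate inequality uses $\beta n\geq n$. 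The bound $|j_{0}|\leq 2^{n}k$ is immediate from $x,y\in[-k,k]$. The main obstacle is really just careful bookkeeping with exponents to keep all disjointness and containment inequalities consistent with the regime $\beta n\geq n+2$.
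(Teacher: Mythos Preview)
Your argument is correct and, for the support, pointwise, Lipschitz, and gap estimates, proceeds essentially as the paper does (your choice of $j_{0}$ in the gap estimate is a slightly different but equivalent way of locating a bump fully inside $[x,y]$; the paper instead observes that $[x,y]$ contains an interval $[(j_{0}-1)/2^{n},(j_{0}+1)/2^{n}]$).

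The one genuine methodological difference is the \holder\ estimate. You obtain (\ref{th:mb-holder}) by the standard $L^{\infty}$/Lipschitz interpolation at the scale $2^{-\beta n}$, which yields a constant of the form $\max\{2M_{\varphi},L_{\varphi}\}$. The paper instead takes $H_{\varphi}$ to be the $\alpha$-\holder\ constant of the single bump $\varphi$, notes that each rescaled bump has the same \holder\ constant, and then argues geometrically: for any $x,y$ one can replace them by points $x_{*},y_{*}$ lying in the \emph{same} bump with the same function values and $|x_{*}-y_{*}|\leq|x-y|$. This reduction works precisely because (\ref{hp:mb-large}) forces any two points in distinct bumps to be farther apart than any two points in a single bump. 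The paper's route gives the sharp constant (that of one bump) and explains why the specific threshold $\beta n\geq n+2$ appears; your interpolation argument is more elementary, needs only the disjointness of supports, and would go through under the weaker condition $\beta n>n+1$, at the price of a possibly larger constant. Either is perfectly adequate here since only the existence of some $H_{\varphi}$ is claimed.

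A minor bookkeeping remark: in (\ref{th:mb-psi-infty}) the integral defining $\psi_{n,k}$ runs from $0$ to $x$, so the relevant length is $k+1$ rather than the full support length $2(k+1)$; with $M_{\varphi}=\|\varphi\|_{\infty}$ (no factor~$2$ needed) this gives the stated bound exactly.
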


\begin{proof}

To begin with, we observe that, under condition (\ref{hp:mb-large}), the multi-bump function $\varphi_{n,k}$ is the sum of $2\cdot 2^{n}k+1$ single-bump functions with disjoint supports. The support of each term is contained in the interval $I_{n,j}$ as defined in (\ref{def:I-ij}), and this implies (\ref{th:mb-support}).

As for the pointwise and the Lipschitz estimates, let $M_{\varphi}$ denote the maximum of $\varphi$, and let $L_{\varphi}$ denote the maximum of $\varphi'$, which is also the Lipschitz constant of $\varphi$. At this point (\ref{th:mb-phi-infty}) and (\ref{th:mb-lip}) follow from the disjoint supports, while (\ref{th:mb-psi-infty}) follows from (\ref{th:mb-phi-infty}) by integration (we recall that $\varphi_{n,k}(x)=0$ when $|x|\geq k+1$). 

As for the \holder\ estimates, let $H_{\varphi}$ denote the $\alpha$-\holder\ constant of $\varphi$ (it is finite because $\varphi$ has compact support). By a scaling argument, the $\alpha$-\holder\ constant of each term in the sum (\ref{defn:mb}) is again $H_{\varphi}$. We claim that also the $\alpha$-\holder\ constant of the sum is $H_{\varphi}$. Indeed, due to the multi-bump structure of $\varphi_{n,k}$, for every pair $(x,y)\in\re^{2}$ there exist points $x_{*}$ and $y_{*}$ in $I_{n,0}$ (namely ``in the same bump'') such that
\begin{equation}
\varphi_{n,k}(x)=\varphi_{n,k}(x_{*}),
\qquad
\varphi_{n,k}(y)=\varphi_{n,k}(y_{*}),
\qquad
|y_{*}-x_{*}|\leq|y-x|.
\nonumber
\end{equation}

The existence of $x_{*}$ and $y_{*}$ follows from condition (\ref{hp:mb-large}), which guarantees that any two points in different bumps are more distant than any two points in the same bump.

Finally, let us prove the gap estimate. From the two conditions on $x$ and $y$ we deduce that there exists $j_{0}\in\z$ such that $|j_{0}|\leq 2^{n}k$ and
\begin{equation}
x\leq\frac{j_{0}-1}{2^{n}}<\frac{j_{0}+1}{2^{n}}\leq y,
\nonumber
\end{equation}
and in particular $I_{n,j_{0}}\subseteq[x,y]$. Since $\varphi_{n,k}$ is nonnegative, it follows that
\begin{equation}
\psi_{n,k}(y)-\psi_{n,k}(x)=\int_{x}^{y}\varphi_{n,k}(s)\,ds\geq\int_{I_{n,j_{0}}}\varphi_{n,k}(s)\,ds.
\nonumber
\end{equation}

The last integral involves a single bump, and can be computed with a variable change. Recalling (\ref{hp:sb-int}), we obtain (\ref{th:mb-gap}).
\end{proof}


The third result shows that one can extend a function from a closed set to its convex hull without modifying its \holder\ or Lipschitz constants. We state it in a slightly different way that is more suited for the applications to our space $\XX$.

\begin{lemma}[Extension]\label{lemma:extension}

Let $\varphi:\re\to[0,+\infty)$ be a nonnegative function. Let  us assume that there exists a closed set $K\subseteq\re$, and three real numbers $\alpha\in(0,1)$, $H\geq 0$, and $L\geq 0$ such that
\begin{gather}
|\varphi(x)-\varphi(y)|\leq H|x-y|^{\alpha}
\qquad
\forall(x,y)\in\re^{2},
\label{hp:approx-holder}  \\[1ex]
|\varphi(x)-\varphi(y)|\leq L|x-y|
\qquad
\forall(x,y)\in K^{2}.
\nonumber
\end{gather}

Let $C$ and $D$ be elements of $K$, with $C<D$.

Then there exists a nonnegative function $\hatphi:\re\to[0,+\infty)$ such that
\begin{gather}
\hatphi(x)=\varphi(x)
\qquad
\forall x\in(-\infty,C]\cup K\cup[D,+\infty),  
\label{th:approx=}   \\[1ex]
|\hatphi(x)-\hatphi(y)|\leq H|x-y|^{\alpha}
\qquad
\forall(x,y)\in\re^{2},
\label{th:approx-holder}  \\[1ex]
|\hatphi(x)-\hatphi(y)|\leq L|x-y|
\qquad
\forall(x,y)\in[C,D]^{2},
\label{th:approx-lip}
\end{gather}
and in addition
\begin{equation}
\left|\varphi(x)-\hatphi(x)\right|\leq 2H\meas\left([C,D]\setminus K\strut\right)^{\alpha}
\qquad
\forall x\in\re.	
\label{th:approx-est}
\end{equation}

\end{lemma}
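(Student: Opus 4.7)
My plan is to build $\hatphi$ by affine interpolation on the connected components of $[C,D]\setminus K$. I would first decompose this set as an at most countable disjoint union of open intervals $(a_i, b_i)$, each with endpoints in $K$ since $K$ is closed and $C, D \in K$. Then I would set $\hatphi = \varphi$ on the closed set $(-\infty, C] \cup K \cup [D, \infty)$, and on each gap $[a_i, b_i]$ define $\hatphi$ to be the affine interpolation between $\varphi(a_i)$ and $\varphi(b_i)$. Nonnegativity of $\hatphi$ is automatic, since it is a convex combination of nonnegative values, and property~(\ref{th:approx=}) holds by construction.

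The pointwise bound (\ref{th:approx-est}) and the Lipschitz estimate (\ref{th:approx-lip}) are fairly routine. For (\ref{th:approx-est}), if $x\in(a_i,b_i)$, then both $\varphi(x)$ and $\hatphi(x)$ lie within distance $H(b_i-a_i)^\alpha$ of $\varphi(a_i)$: the first by (\ref{hp:approx-holder}), the second because $\hatphi(x)$ is a convex combination of $\varphi(a_i)$ and $\varphi(b_i)$. Since $b_i-a_i\leq\meas([C,D]\setminus K)$ the claim follows. For (\ref{th:approx-lip}), the slope of $\hatphi$ on each gap has modulus at most $L$ because $a_i,b_i\in K$; for generic $x<y$ in $[C,D]$ that do not lie in a common gap I would insert two intermediate points $x_+,y_-\in K\cap[x,y]$ and telescope, exploiting the fact that the Lipschitz inequality is additive along concatenations so the three contributions sum to $L(y-x)$.

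The main obstacle is the global \holder\ bound (\ref{th:approx-holder}), since $t\mapsto t^\alpha$ is strictly subadditive and a naive triangle inequality would lose the constant $H$. I would split into two cases. If $x, y$ lie in the same gap $(a_i,b_i)$, then $\hatphi$ is affine there with slope of modulus at most $H(b_i-a_i)^{\alpha-1}$, so $|\hatphi(y)-\hatphi(x)|\leq H(b_i-a_i)^{\alpha-1}|y-x|$; since $|y-x|\leq b_i-a_i$ and $\alpha-1<0$, this is at most $H|y-x|^\alpha$. Otherwise $x$ and $y$ lie in distinct (possibly degenerate) gaps $[a_i,b_i]$ and $[a_j,b_j]$ with $b_i\leq a_j$, and I would write $\hatphi(x)$ and $\hatphi(y)$ as convex combinations of $\varphi$ at the four endpoints with weights $\mu_1,\mu_2$ and $\nu_1,\nu_2$ respectively. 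Expanding $\hatphi(y)-\hatphi(x)$ as a convex combination $\sum_{r,s}\nu_r\mu_s[\varphi(c_r^y)-\varphi(c_s^x)]$ of four differences, applying (\ref{hp:approx-holder}) termwise, and invoking concavity of $t\mapsto t^\alpha$ (Jensen's inequality) bounds the whole expression by $H$ times the $\alpha$-power of the convex combination $\sum_{r,s}\nu_r\mu_s(c_r^y-c_s^x)$. Because $b_i\leq a_j$ all four differences are nonnegative, and the combination collapses algebraically via $\sum_r\nu_r c_r^y - \sum_s\mu_s c_s^x = y - x$, yielding exactly $H(y-x)^\alpha$. The crucial point, and the reason Jensen does not squander the constant, is that the same-sign structure in the distinct-gap case turns an \emph{a priori} weak subadditive estimate into a sharp equality.
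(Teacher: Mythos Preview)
Your construction and all the verifications are correct; the argument is complete. The paper uses the same piecewise-affine extension and handles the pointwise and Lipschitz estimates exactly as you sketch.

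For the global \holder\ bound (\ref{th:approx-holder}), your route differs from the paper's. The paper proceeds by a four-case analysis (both points in $K$; both in the same gap; one in $K$ and one in a gap; two distinct gaps), and in the mixed case it writes $y=\lambda a_i+(1-\lambda)b_i$ and bounds $\hatphi(y)-\hatphi(x)$ via the concavity of $z\mapsto|z-x|^{\alpha}$ on the half-line $z\geq x$ (or $z\leq x$), using crucially that $a_i$ and $b_i$ lie on the same side of $x$; the two-gap case is then reduced to the mixed one. Your approach collapses all of this into a single Jensen step on the bilinear convex combination, with the ``same-sign'' observation $c_r^y-c_s^x\geq 0$ (equivalent to $b_i\leq a_j$) playing the same role as the paper's ``same half-line'' observation. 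Your packaging is more uniform and avoids the nested case reduction; the paper's argument is slightly more hands-on but makes the geometric reason for sharpness (concavity on a half-line) more visible. Both are sharp for exactly the same structural reason.
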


\begin{proof}

The idea is to define $\hatphi$ by extending $\varphi$ from $[C,D]\cap K$ to $[C,D]$ in a piecewise affine way. The main point is showing that this extension does not change the $\alpha$-\holder\ constant or the Lipschitz constant.

To begin with, we observe that $[C,D]\setminus K$ is an open set, and therefore all its connected components are intervals. Let $\{(a_{i},b_{i})\}_{i\in I}$ denote the set of these connected components, where the index set $I$ is finite or countable. At this point we can set
\begin{equation}
\hatphi(x):=\left\{
\begin{array}{ll}
\varphi(x) & \mbox{if }x\in(-\infty,C]\cup K\cup[D,+\infty), \\[1ex]
\varphi(a_{i})+\dfrac{\varphi(b_{i})-\varphi(a_{i})}{b_{i}-a_{i}}(x-a_{i}) & \mbox{if $x\in(a_{i},b_{i})$ for some $i\in I$.}
\end{array}\right.
\label{defn:hatphi}
\end{equation}

We observe that every $x\in(a_{i},b_{i})$ can be written in the form $x=\lambda a_{i}+(1-\lambda)b_{i}$ for some $\lambda\in(0,1)$, and in this case it turns out that $\hatphi(x)=\lambda \varphi(a_{i})+(1-\lambda)\varphi(b_{i})$. This shows that $\hatphi$ is nonnegative if $\varphi$ is nonnegative. From the definition, if follows also that $\varphi$ and $\hatphi$ coincide in $K$ and outside $[C,D]$.

Let us show that $\hatphi$ has the same \holder\ constant of $\varphi$ (the argument for the Lipschitz constant is analogous, just with exponent~1 instead of $\alpha$, constant $L$ instead of $H$, and $x$ and $y$ in $[C,D]$ instead of $\re$). Let us consider real numbers $x$ and $y$, and let us distinguish some cases according to the position of $x$ and $y$ with respect to $K$.
\begin{itemize}

\item If $x$ and $y$ are both in $K$, then (\ref{th:approx-holder}) follows from (\ref{hp:approx-holder}) because $\varphi$ and $\hatphi$ coincide.

\item If $x$ and $y$ are both in the same interval $(a_{i},b_{i})$, then from (\ref{defn:hatphi}) we obtain that
\begin{equation}
|\hatphi(x)-\hatphi(y)|=\frac{|\varphi(b_{i})-\varphi(a_{i})|}{b_{i}-a_{i}}|x-y|\leq H\frac{|x-y|}{(b_{i}-a_{i})^{1-\alpha}}\leq H|x-y|^{\alpha},
\nonumber
\end{equation}
where the first inequality follows from (\ref{hp:approx-holder}), and the second one from the fact that $|x-y|<b_{i}-a_{i}$.

\item  If $x\in K$ and $y\in(a_{i},b_{i})$ for some $i\in I$, then from (\ref{hp:approx-holder}) we obtain that
\begin{equation}
\varphi(a_{i})\leq\varphi(x)+H|a_{i}-x|^{\alpha}=\hatphi(x)+H|a_{i}-x|^{\alpha}
\nonumber
\end{equation}
and
\begin{equation}
\varphi(b_{i})\leq\varphi(x)+H|b_{i}-x|^{\alpha}=\hatphi(x)+H|b_{i}-x|^{\alpha}.
\nonumber
\end{equation}
If $y=\lambda a_{i}+(1-\lambda)b_{i}$ for some $\lambda\in(0,1)$, we deduce that
\begin{eqnarray}
\hatphi(y) & = & \lambda\varphi(a_{i})+(1-\lambda)\varphi(b_{i})  
\nonumber  \\[0.5ex]
& \leq & \hatphi(x)+H\left(\lambda|a_{i}-x|^{\alpha}+(1-\lambda)|b_{i}-x|^{\alpha}\strut\right)  
\nonumber  \\[0.5ex]
& \leq & \hatphi(x)+H|y-x|^{\alpha},
\nonumber
\end{eqnarray}
where the last inequality follows from the concavity of the function $z\to|z-x|^{\alpha}$ in the half-lines $z\leq x$ and $z\geq x$ (here we need that $a_{i}$ and $b_{i}$ lie on the same side with respect to $x$). In an analogous way we can show that
$$\hatphi(y)\geq\hatphi(x)-H|x-y|^{\alpha},$$
and this completes the proof of (\ref{th:approx-holder}) in this case. In a symmetric way we can deal with the case where $y\in K$ and $x\in(a_{i},b_{i})$.

\item  It remains to consider the case where $x\in(a_{j},b_{j})$ and $y\in(a_{i},b_{i})$ for some indices $i\neq j$. Since $a_{j}\in K$ and $b_{j}\in K$, from the result of the previous step we obtain the inequalities
\begin{equation}
\hatphi(a_{j})\leq\hatphi(y)+H|a_{j}-y|^{\alpha}
\qquad\mbox{and}\qquad
\hatphi(b_{j})\leq\hatphi(y)+H|b_{j}-y|^{\alpha}.
\nonumber
\end{equation}

Then we write $x$ in the form $\mu a_{j}+(1-\mu)b_{j}$ for some $\mu\in(0,1)$, and from the previous inequalities we deduce that
\begin{eqnarray}
\hatphi(x) & = & \mu\hatphi(a_{j})+(1-\mu)\hatphi(b_{j})  \nonumber \\[0.5ex]
& \leq & \hatphi(y)+H\left(\mu|a_{j}-y|^{\alpha}+(1-\mu)|b_{j}-y|^{\alpha}\strut\right)  \nonumber \\[0.5ex]
& \leq & \hatphi(y)+H|x-y|^{\alpha},
\nonumber
\end{eqnarray}
where again in the last step we exploited the concavity of the function $z\to|z-y|^{\alpha}$, and the fact that $a_{j}$ and $b_{j}$ lie on the same side with respect to $y$. In an analogous way we can show that $\hatphi(x)\geq\hatphi(y)-H|x-y|^{\alpha}$, and this completes the proof of (\ref{th:approx-holder}) also in the last case.

\end{itemize}

It remains to prove (\ref{th:approx-est}). To this end, we observe that the left-hand side is different from zero only when $x\in(a_{i},b_{i})$ for some $i\in I$. Since $\varphi(a_{i})=\hatphi(a_{i})$, from (\ref{hp:approx-holder}) and (\ref{th:approx-holder}) it follows that
\begin{equation}
|\hatphi(x)-\varphi(x)|\leq|\hatphi(x)-\hatphi(a_{i})|+|\varphi(a_{i})-\varphi(x)|\leq 2H|x-a_{i}|^{\alpha},
\nonumber
\end{equation}
and we conclude by observing that
\begin{equation}
x-a_{i}<b_{i}-a_{i}\leq\meas\left([C,D]\setminus K\strut\right).
\nonumber
\end{equation}

This completes the proof.
\end{proof}


In the last result we show the density in $\XX$ of functions that are of class $C^{1,1}$ in a given interval. This density comes into play in the approximation step of the proof of our main result. Since the definition of $\XX$ involves a different control of the Lipschitz constant in each set $K_{n}$, and these sets are not so nice, it is not clear whether standard approximation tools, such as convolution, can be applied. Our proof relies on~Lemma~\ref{lemma:extension}.

\begin{lemma}[Approximation]\label{lemma:approx}

Let $f\in\XX$ be a function, let $\ep>0$ be a real number, and let $b>a$ be two real numbers.

Then there exists $\hatf\in\XX$ with $\hatf'$ Lipschitz continuous in $[a,b]$, and $\dist_{\XX}(f,\hatf)\leq\ep$.

\end{lemma}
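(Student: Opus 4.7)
The plan is to apply Lemma~\ref{lemma:extension} to $\varphi:=f'$ with closed set $K:=K_n$ for an integer $n$ chosen sufficiently large at the end. Using (\ref{th:Kn-full}), I would first fix $n$ so that $\meas([a-1,b+1]\setminus K_n)$ is as small as desired, and then select points $C\in K_n\cap[a-1,a]$ and $D\in K_n\cap[b,b+1]$; both of these intersections have positive measure when $n$ is large, so the points exist. Lemma~\ref{lemma:extension}, applied with the \holder\ constant $H$ and Lipschitz constant $L:=\Lambda_n$, then produces a nonnegative function $\hatphi$ that coincides with $f'$ on $K_n\cup(-\infty,C]\cup[D,+\infty)$, is $\alpha$-\holder\ on $\re$ with the same constant $H$, is Lipschitz continuous on $[C,D]$ with constant $\Lambda_n$, and satisfies the pointwise bound (\ref{th:approx-est}), whose right-hand side tends to zero as $n\to+\infty$.

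Next I would define
\[
\hatf(x):=f(x)+\int_{C}^{x}\bigl(\hatphi(s)-f'(s)\bigr)\,ds
\qquad\forall x\in\re,
\]
so that $\hatf'=\hatphi$ pointwise. Consequently $\hatf'\geq 0$, the function $\hatf'$ is globally $\alpha$-\holder\ with constant $H$, and $\hatf'$ is Lipschitz continuous on $[a,b]\subseteq[C,D]$ with constant $\Lambda_n$, which is the required regularity. Since the integrand vanishes outside $[C,D]$, $\hatf$ coincides with $f$ on $(-\infty,C]$ and differs from it by a single constant on $[D,+\infty)$; in all cases one checks that both $\|\hatf-f\|_{\infty}$ and $\|\hatf'-f'\|_{\infty}$ are bounded by a constant depending only on $b-a$ times $\meas([C,D]\setminus K_n)^{\alpha}$. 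Choosing $n$ large enough at the outset therefore yields $\dist_{\XX}(f,\hatf)\leq\ep$.

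The delicate point is the verification of condition (\ref{defn:F-lipschitz}) for $\hatf'=\hatphi$ on each of the sets $K_m$. Since these sets form an increasing family $K_m\subseteq K_{m+1}$, there are two cases to treat. For $m\leq n$ one has $K_m\subseteq K_n$, so $\hatphi=f'$ on $K_m$ and the Lipschitz estimate with constant $\Lambda_m$ is inherited directly from $f\in\XX$. For $m>n$ the set $K_m$ may meet the ``gaps'' $[C,D]\setminus K_n$ in which $\hatphi$ has been modified, but the endpoints $C$ and $D$ belong to $K_n\subseteq K_m$; given $x,y\in K_m$ I would split $[x,y]$ at whichever of $C,D$ the segment contains, estimate each piece either by the Lipschitz constant $\Lambda_n$ inside $[C,D]$ (from Lemma~\ref{lemma:extension}) or by $\Lambda_m$ outside $[C,D]$ (from $f\in\XX$), and recombine using $\Lambda_n\leq\Lambda_m$ to produce a single bound with constant $\Lambda_m$. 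This combinatorial bookkeeping across the two scales of Lipschitz control, rather than any deeper analytic issue, is the main obstacle in the argument.
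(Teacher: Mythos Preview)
Your proposal is correct and follows essentially the same route as the paper: choose $n$ large via (\ref{th:Kn-full}), pick endpoints $C\in K_n\cap[a-1,a]$ and $D\in K_n\cap[b,b+1]$, apply Lemma~\ref{lemma:extension} with $L=\Lambda_n$, take the antiderivative, and verify (\ref{defn:F-lipschitz}) by the two-case split $m\le n$ versus $m>n$ with the segment broken at $C$ and/or $D$. The only cosmetic difference is that the paper writes $\hatf(x)=f(0)+\int_0^x\hatphi$, which coincides with your formula since $\hatphi=f'$ outside $[C,D]$.
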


\begin{proof}

Thanks to (\ref{th:Kn-full}), there exists a positive integer $n_{0}$ such that
\begin{equation}
2H\meas\left([a-1,b+1]\setminus K_{n_{0}}\strut\right)^{\alpha}\leq\frac{\ep}{2(b-a+2)}.
\nonumber
\end{equation}

Let us choose two points $a_{0}\in[a-1,a]\cap K_{n_{0}}$ and $b_{0}\in[b,b+1]\cap K_{n_{0}}$, and let us apply Lemma~\ref{lemma:extension} with 
$$\varphi:=f',
\qquad
K:=K_{n_{0}}, 
\qquad
L:=\Lambda_{n_{0}}, 
\qquad
[C,D]:=[a_{0},b_{0}].$$ 

We obtain a function $\hatphi$ satisfying (\ref{th:approx=}) through (\ref{th:approx-est}). We claim that the primitive function
\begin{equation}
\hatf(x):=f(0)+\int_{0}^{x}\hatphi(s)\,ds
\qquad
\forall x\in\re
\nonumber
\end{equation}
has the required properties.

First of all, $\hatf'=\hatphi$ is Lipschitz continuous in $[a,b]$ with constant $\Lambda_{n_{0}}$. 

Let us prove that $\hatf\in\XX$. To this end,  we observe that $\hatf$ satisfies (\ref{defn:F-sign}) because $\hatphi$ is nonnegative, and it satisfies (\ref{defn:F-holder}) because of (\ref{th:approx-holder}). It remains to check (\ref{defn:F-lipschitz}). Let $n$ be a positive integer, and let $x$ and $y$ be in $K_{n}$. If $n\leq n_{0}$, then $K_{n}\subseteq K_{n_{0}}$, and therefore 
\begin{equation}
\hatf'(x)=\hatphi(x)=f'(x)
\qquad\mbox{and}\qquad
\hatf'(y)=\hatphi(y)=f'(y),
\label{eqn:all=}
\end{equation}
so that, for this value of $n$, inequality (\ref{defn:F-lipschitz}) for $\hatf$ follows from the analogous inequality for $f$. If $n\geq n_{0}$ we assume, without loss of generality, that $x<y$, and we distinguish some cases according to the position of $x$ and $y$. 
\begin{itemize}

\item  If both $x$ and $y$ are outside the interval $[a_{0},b_{0}]$, then again (\ref{eqn:all=}) holds true, and therefore there is nothing to prove.

\item  If both $x$ and $y$ lie in the interval $[a_{0},b_{0}]$, then from (\ref{th:approx-lip}) (that holds true with $L=\Lambda_{n_{0}}$, namely the Lipschitz constant of $f'$ in $K_{n_{0}}$) we deduce that
\begin{equation}
\left|\hatf'(x)-\hatf'(y)\right|=\left|\hatphi(x)-\hatphi(y)\right|\leq \Lambda_{n_{0}}|x-y|,
\nonumber
\end{equation}
and we conclude by observing that $\Lambda_{n_{0}}\leq \Lambda_{n}$.

\item  If $x<a_{0}$ and $y\in[a_{0},b_{0}]$, then we observe that $a_{0}\in K_{n_{0}}\subseteq K_{n}$, and therefore
\begin{eqnarray}
\left|\hatf'(x)-\hatf'(y)\right| & \leq & \left|\hatphi(x)-\hatphi(a_{0})\right|+\left|\hatphi(a_{0})-\hatphi(y)\right| 
\nonumber   \\[0.5ex]
& \leq & \Lambda_{n}(a_{0}-x)+\Lambda_{n_{0}}(y-a_{0}) 
\nonumber  \\[0.5ex]
& \leq & \Lambda_{n}(y-x).
\nonumber
\end{eqnarray}
A symmetric argument works if $x$ lies in the interval $[a_{0},b_{0}]$ and $y>b_{0}$.

\end{itemize}

Finally, from (\ref{th:approx-est}) we deduce that
\begin{equation}
\left|f'(x)-\hatf'(x)\right|=\left|f'(x)-\hatphi(x)\right|\leq 2H\meas\left([a_{0},b_{0}]\setminus K_{n_{0}}\strut\right)^{\alpha}\leq\frac{\ep}{2}
\nonumber
\end{equation}
for every $x\in\re$, and since $f'$ and $\hatphi$ coincide outside $[a-1,b+1]$, we deduce also that
\begin{equation}
\left|f(x)-\hatf(x)\right|\leq\int_{a-1}^{b+1}\left|f'(s)-\hatphi(s)\right|\,ds\leq\int_{a-1}^{b+1}\frac{\ep}{2(b-a+2)}\,ds\leq\frac{\ep}{2}
\nonumber
\end{equation}
for every $x\in\re$. Adding these two inequalities we obtain that $\dist_{\XX}(f,\hatf)\leq\ep$, and this completes the proof.
\end{proof}


\subsection{Proof of Theorem~\ref{thm:kohn-residual}}

\subsubsection{Approximate differentiability}

The main idea is the following. Let $\varphi_{1}:\re\to\re$ and $\varphi_{2}:\re\to\re$ be two functions. If $x$ is a point of density~1 for the set $Z(\varphi_{1},\varphi_{2})$, and $\varphi_{2}$ is differentiable at $x$, then $\varphi_{1}$ is approximately differentiable at $x$, and its approximate differential is $\varphi_{2}'(x)$. Our plan is to apply this idea with $\varphi_{1}:=f'$, and $\varphi_{2}$ equal to the Lipschitz extension of $f'$ outside a suitable $K_{n}$ provided by Lemma~\ref{lemma:extension}.

Let $[a,b]$ be a fixed interval. For every positive integer $n$, let us choose real numbers $a_{n}\in[a-1,a]\cap K_{n}$ and $b_{n}\in[b,b+1]\cap K_{n}$, and let us apply Lemma~\ref{lemma:extension} with 
$$\varphi:=f',
\qquad
K:=K_{n},
\qquad
L:=\Lambda_{n},
\qquad
[C,D]:=[a_{n},b_{n}].$$ 

We obtain a Lipschitz function $\widehat{\varphi}_{n}$ that coincides with $f'$ in $[a,b]\cap K_{n}$. Let $E_{n}$ denote the set of points in $[a,b]$ where $\widehat{\varphi}_{n}$ is not differentiable, so that $\meas(E_{n})=0$. Let $K_{n}^{(1)}$ denote the set of points in $[a,b]$ with density~1 with respect to $K_{n}$, and let us set
\begin{equation}
E_{\infty}:=[a,b]\setminus\bigcup_{n=1}^{\infty}K_{n}^{(1)}.
\nonumber
\end{equation} 

Since $\meas(K_{n})=\meas(K_{n}^{(1)})$, from (\ref{th:Kn-full}) it follows that $\meas(E_{\infty})=0$, and therefore also the set
\begin{equation}
E:=E_{\infty}\cup\bigcup_{n=1}^{\infty}E_{n}
\nonumber
\end{equation}
has Lebesgue measure equal to~0.

We claim that $f'$ is approximately differentiable at every point $x_{0}\in[a,b]\setminus E$. Indeed, for any such point $x_{0}$ there exists an integer $n_{0}\geq 1$ such that $x_{0}\in K_{n_{0}}^{(1)}$. Since $f'$ coincides with $\widehat{\varphi}_{n_{0}}$ in $K_{n_{0}}^{(1)}$, and $\widehat{\varphi}_{n_{0}}$ is differentiable in $x_{0}$, it follows that the approximate differential of $f'$ in $x_{0}$ exists. 

Since the interval $[a,b]$ is arbitrary, we have proved that $f'$ is approximately differentiable almost everywhere.


\subsubsection{Residual set}

\paragraph{\textmd{\textit{Quantitative non-pathological behavior}}}

Let $\CC$ denote the set of ``non-counterexamples'', namely the set of all functions $f\in\XX$ for which there exists $g:\re\to\re$ of class $C^{1,1}$ such that the coincidence set $Z(f,g)$ has positive Lebesgue measure. We have to show that $\CC$ is a countable union of closed subsets of $\XX$ with empty interior. 

To this end, we state in a quantitative way the definition of $\CC$. For every positive integer $k$, we consider the set $\CC_{k}$ of all functions $f\in\XX$ for which there exists $g:\re\to\re$ of class $C^{1,1}$ with the following properties:
\begin{gather}
|g(x)|+|g'(x)|\leq k
\qquad
\forall x\in\re,
\label{defn:g1}   \\[1ex]
|g'(x)-g'(y)|\leq k|x-y|
\qquad
\forall (x,y)\in\re^{2},
\label{defn:g2}   \\[1ex]
\exists z\in[-k,k]\quad\forall r\in\left(0,\frac{1}{k}\right)
\quad
\meas\left(Z(f,g)\cap[z-r,z+r]\strut\right)\geq\frac{19}{10}r.
\label{defn:g3}
\end{gather}

In words, (\ref{defn:g1}) is a quantitative boundedness of $g$ and $g'$, (\ref{defn:g2}) is a quantitative Lipschitz continuity of $g'$, and (\ref{defn:g3}) is a sort of quantitative estimate for the position and size of the coincidence set.

The proof is complete if we show that $\CC$ is the union of all $\CC_{k}$'s, and that $\CC_{k}$ is a closed set with empty interior for every positive integer $k$.

\paragraph{\textmd{\textit{The set $\CC$ is the union of all $\CC_{k}$'s}}}

Let $f\in\CC$, and let $g:\re\to\re$ be a function of class $C^{1,1}$ such that $Z(f,g)$ has positive Lebesgue measure. Up to modifying $g$ outside a large enough ball, we can assume that also $g\in C^{1}_{b}(\re)$. At this point $g$ satisfies (\ref{defn:g1}) and (\ref{defn:g2}) provided that $k$ is large enough. Moreover, if we consider any point $z$ of density~1 for the coincidence set $Z(f,g)$, then also (\ref{defn:g3}) is satisfied provided that $k$ is large enough.

This proves that every $f\in\CC$ belongs to $\CC_{k}$ if $k$ is sufficiently large.

\paragraph{\textmd{\textit{The set $\CC_{k}$ is closed}}}

Let $k$ be a \emph{fixed} positive integer. Let $\{f_{n}\}\subseteq\CC_{k}$ be any sequence, and let us assume that $f_{n}\to f_{\infty}$ and $f_{n}'\to f_{\infty}'$ uniformly in $\re$. Let $\{g_{n}\}\subseteq C^{1,1}(\re)$ and $\{z_{n}\}\subseteq[-k,k]$ be the corresponding sequences of functions and points as in the definition of $\CC_{k}$. Due to the uniform bounds on $g_{n}$ and $z_{n}$ provided by (\ref{defn:g1}) through (\ref{defn:g3}), we deduce that (up to subsequences, not relabelled)
\begin{equation}
g_{n}\to g_{\infty}
\quad\mbox{in }C^{1}_{b}(\re)
\qquad\qquad\mbox{and}\qquad\qquad
z_{n}\to z_{\infty}.
\nonumber
\end{equation}

Moreover, the inequalities pass to the limit, and hence $g_{\infty}$ satisfied (\ref{defn:g1}) and (\ref{defn:g2}), and $z_{\infty}$ satisfies $|z_{\infty}|\leq k$.
It remains to show that
\begin{equation}
\meas\left(Z(f_{\infty},g_{\infty})\cap[z_{\infty}-r,z_{\infty}+r]\strut\right)\geq\frac{19}{10}r
\qquad
\forall r\in\left(0,\frac{1}{k}\right).
\nonumber
\end{equation}

To this end, we observe that for every positive real number $r$ it turns out that
\begin{equation}
Z(f_{\infty},g_{\infty})\cap[z_{\infty}-r,z_{\infty}+r]\supseteq\omegalim_{n\to+\infty}Z(f_{n},g_{n})\cap[z_{n}-r,z_{n}+r],
\nonumber
\end{equation}
so that the conclusion follows from Lemma~\ref{lemma:omega-lim}. 

\paragraph{\textmd{\textit{The set $\CC_{k}$ has empty interior}}}

Let us assume by contradiction that there exists an integer $k_{0}\geq 1$, a function $f_{0}\in\XX$, and a real number $\ep_{0}>0$ such that $B_{\XX}(f_{0},\ep_{0})\subseteq\CC_{k_{0}}$, where $B_{\XX}(f_{0},\ep_{0})$ denotes the open ball in $\XX$ with center in $f_{0}$ and radius $\ep_{0}$. 

\subparagraph{\textmd{\textit{Regularization of the center}}}

Due to the approximation Lemma~\ref{lemma:approx}, we can assume that $f_{0}'$ is Lipschitz continuous in $[-k_{0},k_{0}]$ with some constant $L_{0}$. Up to multiplying $f_{0}$ by $1-\ep$ for a small enough $\ep$, we can also assume that $f_{0}'$ does not saturate the inequalities in (\ref{defn:F-holder}) and (\ref{defn:F-lipschitz}), namely that there exists $\ep_{1}\in(0,1)$ such that
\begin{equation}
|f_{0}'(x)-f_{0}'(y)|\leq (1-\ep_{1})H|x-y|^{\alpha}
\qquad
\forall (x,y)\in\re^{2}
\label{est:sat-holder}
\end{equation}
and
\begin{equation}
|f_{0}'(x)-f_{0}'(y)|\leq (1-\ep_{1})\Lambda_{n}|x-y|
\qquad
\forall n\geq 1,\ \forall (x,y)\in K_{n}^{2}.
\label{est:sat-lip}
\end{equation}

\subparagraph{\textmd{\textit{Use of rescaled basic ingredient}}}

Let us choose a real number $\ep_{2}>0$ such that
\begin{equation}
\ep_{2}H_{\varphi}\leq\ep_{1}H,
\qquad
\ep_{2}L_{\varphi}\leq\ep_{1},
\qquad
\ep_{2}(k_{0}+2)M_{\varphi}<\ep_{0},
\label{defn:ep}
\end{equation}
where $H_{\varphi}$, $L_{\varphi}$ and $M_{\varphi}$ are the constants of Lemma~\ref{lemma:multi-bump}, and then let us choose an integer $n_{0}$ large enough such that (note that we can fulfill the last condition because of assumption (\ref{hp:alpha-beta}))
\begin{equation}
\frac{10}{2^{n_{0}}}<\frac{1}{k_{0}},
\qquad\quad
\frac{1}{2^{n_{0}}}>\frac{6}{2^{\beta n_{0}}},
\qquad\quad
\frac{\ep_{2}}{2^{(\alpha+1)\beta n_{0}}}>(k_{0}+L_{0})\frac{400}{2^{2n_{0}}}.
\label{defn:n}
\end{equation}

Let us consider the multi-bump functions $\varphi_{n,k}$ defined in (\ref{defn:mb}) and their antiderivatives $\psi_{n,k}$ defined in (\ref{defn:MB}), and let us set
\begin{equation}
\psi(x):=f_{0}(x)+\ep_{2}\psi_{n_{0},k_{0}}(x)
\qquad
\forall x\in\re.
\label{defn:kohn-psi}
\end{equation}

In the next paragraphs we show that $\psi\in B_{\XX}(f_{0},\ep_{0})$ and $\psi\not\in\CC_{k_{0}}$, which provides the required contradiction.

\subparagraph{\textmd{\textit{Final contradiction -- Proof that $\psi\in B_{\XX}(f_{0},\ep_{0})$}}}

Let us check first that $\psi\in\XX$. Inequality (\ref{defn:F-sign}) is trivial because $f_{0}'$ and $\varphi_{n_{0},k_{0}}$ are nonnegative. Inequality (\ref{defn:F-holder}) follows from (\ref{est:sat-holder}), (\ref{th:mb-holder}), and the first condition in (\ref{defn:ep}). It remains to check (\ref{defn:F-lipschitz}). To this end, let $n$ be a positive integer, and let $x$ and $y$ be in $K_{n}$.

If $n\leq n_{0}$, then $K_{n}\subseteq K_{n_{0}}$, and therefore from (\ref{th:mb-support}) we deduce that $\psi_{n_{0},k_{0}}$ vanishes in $K_{n}$. Thus from (\ref{est:sat-lip}) it follows that
\begin{equation}
|\psi'(y)-\psi'(x)|=|f_{0}'(y)-f_{0}'(x)|\leq (1-\ep_{1})\Lambda_{n}|x-y|\leq \Lambda_{n}|x-y|.
\nonumber
\end{equation}

If $n\geq n_{0}$, then from (\ref{est:sat-lip}), (\ref{th:mb-lip}), and the second condition in (\ref{defn:ep}) it follows that
\begin{eqnarray}
|\psi'(y)-\psi'(x)| & \leq & |f_{0}'(y)-f_{0}'(x)|+\ep_{2}|\varphi_{n_{0},k_{0}}(y)-\varphi_{n_{0},k_{0}}(x)|  
\nonumber  \\[0.5ex]
& \leq & (1-\ep_{1})\Lambda_{n}|y-x|+\ep_{2}2^{(1-\alpha)\beta n_{0}}L_{\varphi}|y-x|  
\nonumber  \\[0.5ex]
& \leq & (1-\ep_{1})\Lambda_{n}|y-x|+\ep_{1}2^{(1-\alpha)\beta n}|y-x|  
\nonumber  \\[0.5ex]
& = & \Lambda_{n}|y-x|-\ep_{1}\left(\Lambda_{n}-2^{(1-\alpha)\beta n}\right)|y-x|  
\nonumber  \\[0.5ex]
& \leq & \Lambda_{n}|y-x|, 
\nonumber
\end{eqnarray}
where in the last inequality we exploited our condition  (\ref{hp:Ln}) on the sequence $\{\Lambda_{n}\}$ (this is the exact point where we need it). This completes the proof that $\psi\in\XX$.

In order to show that the distance between $f_{0}$ and $\psi$ is less than $\ep_{0}$, we exploit (\ref{th:mb-phi-infty}) and (\ref{th:mb-psi-infty}), from which we deduce that
\begin{equation}
|\psi'(x)-f_{0}'(x)|=\ep_{2}|\varphi_{n_{0},k_{0}}(x)|\leq\ep_{2}M_{\varphi}
\qquad
\forall x\in\re
\nonumber
\end{equation}
and
\begin{equation}
|\psi(x)-f_{0}(x)|=\ep_{2}|\psi_{n_{0},k_{0}}(x)|\leq\ep_{2}(k_{0}+1)M_{\varphi}
\qquad
\forall x\in\re.
\nonumber
\end{equation}

At this point the conclusion follows from the last condition in (\ref{defn:ep}).

\subparagraph{\textmd{\textit{Final contradiction -- Proof that $\psi\not\in\CC_{k_{0}}$}}}

Let us assume that $\psi\in\CC_{k_{0}}$, and let $g$ and $z$ be the function and the point corresponding to $\psi$ in the definition of $\CC_{k_{0}}$. Let us set
\begin{equation}
J_{-}:=Z(\psi,g)\cap\left[z-\frac{10}{2^{n_{0}}},z-\frac{8}{2^{n_{0}}}\right]\cap(\re\setminus U_{n_{0}}),
\nonumber
\end{equation}
\begin{equation}
J_{+}:=Z(\psi,g)\cap\left[z+\frac{8}{2^{n_{0}}},z+\frac{10}{2^{n_{0}}}\right]\cap(\re\setminus U_{n_{0}}).
\nonumber
\end{equation}

We claim that both $J_{-}$ and $J_{+}$ have positive Lebesgue measure. Indeed, let us consider the disjoint union
\begin{equation}
J_{3}=J_{-}\cup J_{1}\cup J_{2},
\nonumber
\end{equation}
where we set
\begin{eqnarray}
J_{3} & := & Z(\psi,g)\cap\left[z-\frac{10}{2^{n_{0}}},z+\frac{10}{2^{n_{0}}}\right], 
\nonumber  \\
J_{1} & := & Z(\psi,g)\cap\left[z-\frac{10}{2^{n_{0}}},z-\frac{8}{2^{n_{0}}}\right]\cap U_{n_{0}}, \nonumber  \\
J_{2} & := & Z(\psi,g)\cap\left[z-\frac{8}{2^{n_{0}}},z+\frac{10}{2^{n_{0}}}\right]. 
\nonumber
\end{eqnarray}

Let us estimate the measure of these sets. Due to the first condition  in (\ref{defn:n}), from (\ref{defn:g3}) we know that
\begin{equation}
\meas(J_{3})\geq\frac{19}{2^{n_{0}}}.
\nonumber
\end{equation}

Moreover, $J_{1}\subseteq U_{n_{0}}\cap[z-10/2^{n_{0}},z-8/2^{n_{0}}]$, and since the interval intersects at most three intervals of the form $I_{n_{0},j}$ we deduce that
\begin{equation}
\meas(J_{1})\leq\frac{6}{2^{\beta n_{0}}}.
\nonumber
\end{equation}

Finally, we can estimate the measure of $J_{2}$ with the measure of the interval in its definition, and we deduce that
\begin{equation}
\meas(J_{2})\leq\frac{18}{2^{n_{0}}}.
\nonumber
\end{equation}

From all these inequalities it follows that
\begin{equation}
\meas(J_{-})\geq\frac{19}{2^{n_{0}}}-\frac{18}{2^{n_{0}}}-\frac{6}{2^{\beta n_{0}}},
\nonumber
\end{equation}
which is positive because of the second condition in (\ref{defn:n}). A symmetric argument proves that also the measure of $J_{+}$ is positive.

Now we can argue as in step (VII) of~\cite{kohn}. Let us choose $x\in J_{-}$ and $y\in J_{+}$, with $x$ of density~1 with respect to $J_{-}$, and let us observe that by definition $\psi(x)=g(x)$ and $\psi(y)=g(y)$, but also $\psi'(x)=g'(x)$ because $x$ is of density~1 also for $Z(\psi,g)$. It follows that
\begin{equation}
\psi(y)-\psi(x)-\psi'(x)(y-x)=g(y)-g(x)-g'(x)(y-x).
\label{eqn:psi-g}
\end{equation}

Since $x\not\in U_{n_{0}}$, we deduce also that $x$ does not belong to the support of $\varphi_{n_{0},k_{0}}$, and therefore $\psi'(x)=f_{0}'(x)$. Recalling (\ref{defn:kohn-psi}), from (\ref{eqn:psi-g}) we deduce that
\begin{equation}
\ep_{2}\left(\psi_{n_{0},k_{0}}(y)-\psi_{n_{0},k_{0}}(x)\right)=[g(y)-g(x)-g'(x)(y-x)]-[f(y)-f(x)-f'(x)(y-x)].
\nonumber
\end{equation}

Now we estimate in two different ways the left-hand side and the right-hand side. On the one hand, since $x$ and $y$ are in $[-k_{0},k_{0}]$, and $y-x>3/2^{n_{0}}$, from (\ref{th:mb-gap}) we know that
\begin{equation}
\ep_{2}\left(\psi_{n_{0},k_{0}}(y)-\psi_{n_{0},k_{0}}(x)\strut\right)\geq\frac{\ep_{2}}{2^{(\alpha+1)\beta n_{0}}}.
\nonumber
\end{equation}

On the other hand, since in the interval $[-k_{0},k_{0}]$ the function $g'$ is Lipschitz continuous with constant $k_{0}$, and the function $f_{0}'$ is Lipschitz continuous with constant $L_{0}$, from the mean value theorem we obtain that
\begin{equation}
|g(y)-g(x)-g'(x)(y-x)|\leq k_{0}|y-x|^{2},
\nonumber
\end{equation}
and
\begin{equation}
|f(y)-f(x)-f'(x)(y-x)|\leq L_{0}|y-x|^{2}.
\nonumber
\end{equation}

Recalling that $y-x\leq 20/2^{n_{0}}$, we have proved that
\begin{equation}
\frac{\ep_{2}}{2^{(\alpha+1)\beta n_{0}}}\leq (k_{0}+L_{0})\frac{400}{2^{2n_{0}}},
\nonumber
\end{equation}
which contradicts the last condition in (\ref{defn:n}). This completes the proof.


\setcounter{equation}{0}
\section{Time dependent propagation speed}\label{sec:dgcs}

\subsection{Statement of the problem and previous result}

\paragraph{\textmd{\textit{General functional setting}}}

Let $\mathcal{H}$ be an infinite dimensional Hilbert space, and let $A$ be a nonnegative self-adjoint linear operator on $\mathcal{H}$ with dense domain $D(A)$. We assume that there exist an orthonormal basis $\{e_{i}\}$ of $\mathcal{H}$, and a nondecreasing sequence $\{\lambda_{i}\}$ of nonnegative real numbers such that $\lambda_{i}\to+\infty$ as $i\to +\infty$ and
\begin{equation}
Ae_{i}=\lambda_{i}^{2}e_{i}
\qquad
\forall i\in\n.
\nonumber
\end{equation}

Thanks to the orthonormal basis $\{e_{i}\}$, we can identify elements $u\in \mathcal{H}$ with sequences $\{u_{i}\}\in\ell^{2}$, where $u_{i}:=\langle u,e_{i}\rangle$ is usually called the $i$-th Fourier component of $u$ (here angle brackets denote the scalar product in $\mathcal{H}$).

Given a continuous function $c:[0,+\infty)\to\re$, we consider the abstract evolution equation
\begin{equation}
u''(t)+c(t)Au(t)=0
\qquad
\forall t\geq 0,
\label{eqn:wave}
\end{equation}
with initial data
\begin{equation}
u(0)=u_{0},
\qquad
u'(0)=u_{1}.
\label{eqn:wave-data}
\end{equation}

This equation is an abstract model, for example, of the wave equation $u_{tt}-c(t)\Delta u=0$ with homogeneous Dirichlet or Neumann boundary conditions in a bounded domain $\Omega\subseteq\re^{d}$ with smooth enough boundary. Keeping in mind this concrete model, we often refer to the coefficient $c(t)$ as the ``propagation speed''.

A \emph{very weak solution} to (\ref{eqn:wave})--(\ref{eqn:wave-data}) is a sequence $\{u_{i}(t)\}$ of functions of class $C^{2}$ satisfying the (uncoupled) system of ordinary differential equations
\begin{equation}
u_{i}''(t)+c(t)\lambda_{i}^{2}u_{i}(t)=0
\qquad
\forall i\in\n,\quad\forall t\geq 0,
\label{eqn:wave-ode}
\end{equation}
with initial data
\begin{equation}
u_{i}(0)=u_{0,i},
\qquad
u_{i}'(0)=u_{1,i}.
\label{eqn:ode-data}
\end{equation}

Existence and uniqueness of these very weak solutions is an elementary fact concerning linear ordinary differential equations. In some sense, at this level of generality we are forgetting about the Hilbert space $\mathcal{H}$, the operator $A$, and the abstract equation (\ref{eqn:wave}) in $\mathcal{H}$, and we are just considering the infinite system (\ref{eqn:wave-ode}) of ordinary differential equations depending on a parameter $\lambda_{i}$. Nevertheless, the final goal is interpreting $u_{i}(t)$ as the $i$-th Fourier component of a ``true solution'' to (\ref{eqn:wave})--(\ref{eqn:wave-data}) to be defined as
\begin{equation}
u(t):=\sum_{i=0}^{\infty}u_{i}(t)e_{i}
\qquad
\forall t\geq 0,
\nonumber
\end{equation} 
provided that the series converges in some sense. 

As an extension of the identification of $\mathcal{H}$ with $\ell^{2}$, one can give an abstract definition of Sobolev spaces, Gevrey spaces, distributions, Gevrey ultradistributions with respect to the operator $A$. These spaces are defined as the set of sequences $\{u_{i}\}$ of real numbers such that
\begin{equation}
\sum_{i=0}^{\infty}\varphi(\lambda_{i})u_{i}^{2}<+\infty,
\nonumber
\end{equation}
where $\varphi:[0,+\infty)\to[0,+\infty)$ is a suitable function (a positive/negative power in the case of Sobolev spaces/distributions, and a positive/negative exponential in the case of Gevrey spaces/ultradistributions). In the concrete case where $A$ is the Laplacian in a smooth bounded set with reasonable boundary conditions, these abstract spaces with respect to $A$ coincide with the usual spaces. We refer to~\cite{gg:sns,gg:dgcs-strong,gg:cjs-strong} for a complete set of definitions. Here we just recall the two notions that we need in the sequel.

\begin{defn}[Gevrey spaces/ultradistributions]
\begin{em}

Let $u$ be a sequence $\{u_{i}\}$ of real numbers.
\begin{itemize}

\item  Let $s>0$ and $r>0$ be real numbers. We say that $u$ is a Gevrey function with respect to $A$ of order $s$ and radius $r$, and we write $u\in\GG_{s,r}(A)$, if
\begin{equation}
\|u\|_{\GG_{s,r}(A)}^{2}:=\sum_{i=0}^{\infty}u_{i}^{2}\exp\left(2r\lambda_{i}^{1/s}\right)<+\infty.
\nonumber
\end{equation}

\item  Let $S>0$ and $R>0$ be real numbers. We say that $u$ is a Gevrey ultradistribution with respect to $A$ of order $S$ and radius $R$, and we write $u\in\GG_{-S,R}(A)$, if
\begin{equation}
\|u\|_{\GG_{-S,R}(A)}^{2}:=\sum_{i=0}^{\infty}u_{i}^{2}\exp\left(-2R\lambda_{i}^{1/S}\right)<+\infty.
\nonumber
\end{equation}

\end{itemize}

\end{em}
\end{defn}

\paragraph{\textmd{\textit{Previous result}}}

Now that we have a notion of ``regularity'' for sequences of real numbers, and a notion of very weak solutions to problem (\ref{eqn:wave})--(\ref{eqn:wave-data}), it is natural to ask the following question. Assume that initial data, namely the sequences $\{u_{0,i}\}$ and $\{u_{1,i}\}$ that appear in (\ref{eqn:ode-data}), have a certain regularity, can we conclude that the sequence $\{u_{i}(t)\}$ of solutions to (\ref{eqn:wave-ode})--(\ref{eqn:ode-data}) has the same regularity for all positive times?

The answer depends on the time regularity of the propagation speed. If $c(t)$ is bounded between two positive constants, say
\begin{equation}
0<\mu_{1}\leq c(t)\leq \mu_{2}
\qquad
\forall t\geq 0,
\label{hp:c12}
\end{equation}
and it has $W^{1,1}$ regularity in time (or $BV$ regularity if we admit discontinuous propagation speeds), then the solution has roughly speaking the same regularity of initial data. If $c(t)$ is just \holder\ continuous of some order $\alpha\in(0,1)$, then the regularity in $\GG_{s,r}(A)$ is preserved (with a radius that decreases with time) if $s\leq (1-\alpha)^{-1}$. On the contrary, solutions with initial data in $\GG_{r,s}(A)$ with $s>(1-\alpha)^{-1}$ can exhibit an instantaneous severe derivative loss, in such a way that for all positive times the solution is not even a hyperdistribution of order $S>(1-\alpha)^{-1}$.

Both the positive and the negative result are contained in the seminal paper~\cite{dgcs} (see also~\cite{cjs} for the degenerate case). Here below we quote the negative result, rephrased in the modern language.

\begin{thmbibl}[see~{\cite[Theorem~10]{dgcs}}]\label{thmbibl:dgcs}

Let $\alpha$, $\beta$, $B$ be real numbers with
\begin{equation}
\alpha\in(0,1),
\qquad\qquad
\beta>\frac{1}{1-\alpha},
\qquad\qquad
B>\frac{1}{1-\alpha}.
\nonumber
\end{equation}

Then there exist a function $c:[0,+\infty)\to[1,2]$, \holder\ continuous of order $\alpha$, and a very weak solution $u$ to (\ref{eqn:wave}) such that
\begin{equation}
(u(0),u'(0))\in\GG_{\beta,r}(A)\times\GG_{\beta,r}(A)
\qquad
\forall r>0,
\label{th:dgcs-data}
\end{equation}
but
\begin{equation}
(u(t),u'(t))\not\in\GG_{-B,R}(A)\times\GG_{-B,R}(A)
\qquad
\forall t>0,\quad \forall R>0.
\label{th:dgcs}
\end{equation}

\end{thmbibl}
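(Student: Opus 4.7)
I would take $\XX$ to be the space of functions $c:[0,+\infty)\to[1,2]$ that are $\alpha$-\holder\ continuous with seminorm bounded by a fixed $H$, endowed with any metric inducing uniform convergence on compact sets; this is a complete metric space. A standard Grönwall estimate on the fundamental matrix of (\ref{eqn:wave-ode}) shows that, for every fixed $i\in\n$ and $t\geq 0$, the amplification factor
\begin{equation}
E_{i}(c,t):=\sup_{(a,b)\neq(0,0)}\frac{\lambda_{i}^{2}u_{i}(t)^{2}+u_{i}'(t)^{2}}{\lambda_{i}^{2}a^{2}+b^{2}},
\nonumber
\end{equation}
where $u_{i}$ solves (\ref{eqn:wave-ode}) with $(u_{i}(0),u_{i}'(0))=(a,b)$, is a continuous function of $c\in\XX$.

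\textbf{Quantitative non-pathology.} Let $\CC\subseteq\XX$ denote the set of non-counterexamples: those $c\in\XX$ for which no initial datum in $\bigcap_{r>0}\GG_{\beta,r}(A)\times\GG_{\beta,r}(A)$ produces a solution satisfying (\ref{th:dgcs}). Since the modes of (\ref{eqn:wave-ode}) decouple, one obtains the inclusion
\begin{equation}
\CC\subseteq\bigcup_{k=1}^{\infty}\CC_{k},\qquad \CC_{k}:=\left\{c\in\XX\,:\,E_{i}(c,1/k)\leq k\exp\bigl(k\lambda_{i}^{1/B}\bigr)\ \forall\,i\in\n\right\}.
\nonumber
\end{equation}
Indeed, if $c\not\in\bigcup_{k}\CC_{k}$ then for every $k$ there is some $i_{k}$ violating the bound, and since the general Hölder energy estimate gives $E_{i}(c,1/k)\leq C_{k}\exp(C_{k}\lambda_{i}^{1-\alpha})$ one may extract a subsequence $\lambda_{i_{n}}\to+\infty$ with $E_{i_{n}}(c,1/n)\geq n\exp(n\lambda_{i_{n}}^{1/B})$; choosing data aligned with the amplified direction at mode $i_{n}$ of magnitude $\exp(-n\lambda_{i_{n}}^{1/\beta})$, and zero elsewhere, the assumptions $1-\alpha>1/\beta$ and $1-\alpha>1/B$ yield a datum in every $\GG_{\beta,r}(A)$ whose solution leaves every $\GG_{-B,R}(A)$ at every positive time. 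The continuity from the first paragraph makes each $\CC_{k}$ closed.

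\textbf{Empty interior via DGCS resonance.} The heart of the proof is showing that every $\CC_{k_{0}}$ has empty interior. Assume by contradiction that $B_{\XX}(c_{0},\ep_{0})\subseteq\CC_{k_{0}}$. After a preliminary regularisation, I may take $c_{0}$ smooth, with values in $(1+\ep_{1},2-\ep_{1})$ and \holder\ seminorm at most $(1-\ep_{1})H$. The basic ingredient, imported from \cite{dgcs}, is a rescaled perturbation
\begin{equation}
\vartheta_{n}(t):=\delta_{n}\chi(t)\sin(\omega_{n} t),\qquad \omega_{n}\to+\infty,\qquad \delta_{n}\sim\omega_{n}^{-\alpha},
\nonumber
\end{equation}
where $\chi$ is a fixed smooth cutoff supported in $(0,1/k_{0})$, tuned so that $\vartheta_{n}$ has $\alpha$-\holder\ seminorm and $L^{\infty}$-norm bounded by a small multiple of $\ep_{1}$, and $\omega_{n}/2\sim\lambda_{i_{n}}$ triggers parametric resonance at some mode $i_{n}$. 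Saturating the positive-direction Hölder bound, this produces
\begin{equation}
E_{i_{n}}(c_{0}+\vartheta_{n},1/k_{0})\geq\exp\bigl(\gamma\lambda_{i_{n}}^{1-\alpha}\bigr)
\nonumber
\end{equation}
for a constant $\gamma>0$ independent of $n$. Since $c_{0}+\vartheta_{n}\in B_{\XX}(c_{0},\ep_{0})$ for $n$ large, and since the hypothesis $B>1/(1-\alpha)$ gives $1-\alpha>1/B$, the right-hand side eventually exceeds $k_{0}\exp(k_{0}\lambda_{i_{n}}^{1/B})$, contradicting $c_{0}+\vartheta_{n}\in\CC_{k_{0}}$. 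The genuinely hard step, and the main obstacle, is the parametric-resonance construction of $\vartheta_{n}$ with prescribed \holder\ regularity; the Baire framework does not avoid this, but confines the technical work to a single scale rather than an infinite cascade, and delivers at once a residual family of counterexamples from which Theorem~\ref{thmbibl:dgcs} follows immediately.
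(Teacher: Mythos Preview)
The inclusion $\CC\subseteq\bigcup_k\CC_k$ is the gap. Your $\CC_k$ controls the amplification $E_i(c,\cdot)$ only at the single time $1/k$, and these times accumulate at~$0$. If $c\notin\bigcup_k\CC_k$ you obtain modes $i_n$ with $E_{i_n}(c,1/n)$ huge, but~(\ref{th:dgcs}) demands that the solution escape every $\GG_{-B,R}(A)$ at \emph{every} positive time, in particular at a fixed $t_0>0$. Passing from time $1/n$ to $t_0$ requires a lower energy bound, and for a merely $\alpha$-\holder\ coefficient the sharp two-sided estimate only gives $E_i(c,t_0)\geq E_i(c,1/n)\exp\bigl(-C\lambda_i^{1-\alpha}t_0\bigr)$; since $1-\alpha>1/B$, the possible decay $\exp(-C\lambda_{i_n}^{1-\alpha}t_0)$ swamps your lower bound $\exp(n\lambda_{i_n}^{1/B})$, and nothing is proved at $t_0$. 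There is a structural obstacle behind this: working in the space of $c$ alone, membership in $\CC$ reads ``for every $\psi$ there exist $t,R$ with $(u(t),u'(t))\in\GG_{-B,R}(A)$'', a $\forall\exists$ statement whose witnesses $t,R$ depend on $\psi$, and this does not unwind into any uniform bound on the $E_i$.

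The paper sidesteps both issues by taking the complete metric space to be the product $\FF\times\GG_{\beta,\infty}(A)$ of pairs $(c,\psi)$, so that the non-counterexample condition involves a single solution and a single existential; then $\CC_k=\{(c,\psi):\exists\,t\in[1/k,k],\ \|u(t)\|_{\GG_{-B,k}}^2+\|u'(t)\|_{\GG_{-B,k}}^2\leq k\}$ and the union trivially exhausts $\CC$. In the empty-interior step the centre $c_0$ is regularised to be \emph{Lipschitz} and constant on $[0,\delta]$ with $\delta<1/k_0$; the DGCS resonance is inserted on $[0,\delta_n]\subseteq[0,\delta]$, and because $c_n=c_0$ is Lipschitz for $t\geq\delta_n$, the energy estimate of Lemma~\ref{lemma:dgcs-energy} (with a rate independent of $\lambda$) propagates the amplification uniformly across the whole interval $[1/k_0,k_0]$. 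This persistence under a Lipschitz tail is precisely the mechanism your single-time $\CC_k$ cannot accommodate.
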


The derivative loss of the form (\ref{th:dgcs}) is the maximum possible, since we know that with initial data satisfying (\ref{th:dgcs-data}) the solution is at least a Gevrey hyperdistribution of order $S=(1-\alpha)^{-1}$ for all times (see~\cite[Theorem~3]{dgcs}).

\paragraph{\textmd{\textit{The basic ingredient}}}

The basic tool in the construction of the counterexamples in the spirit of Theorem~\ref{thmbibl:dgcs}, as well as in our construction, is considering the three functions
\begin{gather}
\gamma(\ep,t):=1-16\ep^{2}\sin^{4}t-8\ep\sin(2t), \label{defn:dgcs-gamma}  \\
b(\ep,t):=\ep(2t-\sin(2t)), \nonumber \\
w(\ep,t):=\sin t\cdot\exp(b(\ep,t)). 
\label{defn:dgcs-w}
\end{gather}

The fundamental property is that $w(\ep,t)$ satisfies
\begin{equation}
\frac{\partial^{2} w}{\partial t^{2}}(\ep,t)+\gamma(\ep,t)w(\ep,t)=0,
\nonumber
\end{equation}
and grows exponentially with time.

More generally, for every pair of positive real numbers $m$ and $\lambda$, it turns out that the function $v(t):=w(\ep,m\lambda t)$ is a solution to equation
\begin{equation}
v''(t)+\lambda^{2}c(t)v(t)=0,
\label{eqn:ode-cl}
\end{equation}
with time dependent propagation speed $c(t):=m^{2}\gamma(\ep,m\lambda t)$. We observe that for every $\ep\in(0,1)$ the function $c(t)$ satisfies the uniform bounds
\begin{equation}
m^{2}(1-24\ep)\leq c(t)\leq m^{2}(1+24\ep)
\qquad
\forall t\geq 0,
\nonumber
\end{equation}
and the \holder\ condition 
\begin{equation}
|c(t)-c(s)|\leq\ep m^{2}(m\lambda)^{\alpha}H_{\gamma}|t-s|^{\alpha}
\qquad
\forall(s,t)\in[0,+\infty)^{2}
\label{est:gamma-holder}
\end{equation}
for a suitable constant $H_{\gamma}$, and that the solution $v(t)$ grows in time as $\exp(2\ep m\lambda t)$. Now the key point is setting $m\sim 3/2$ and $\ep\sim\lambda^{-\alpha}$, in such a way that when $\lambda$ is large the propagation speed $c(t)$ stays between 1 and 2, its $\alpha$-\holder\ constant remains bounded, but the solution grows in time as $\exp(3\lambda^{1-\alpha}t)$. This is the point where Gevrey spaces and Gevrey ultradistributions of order $(1-\alpha)^{-1}$ come into play.


\subsection{Choice of the functional space and our result}

In this paper we show that the severe derivative loss of Theorem~\ref{thmbibl:dgcs} is the common behavior, namely it happens for the ``generic'' choice of the propagation speed and the initial conditions.

\paragraph{\textmd{\textit{The space of admissible propagation speeds}}}

Let $\alpha$, $\mu_{1}$, $\mu_{2}$, $H$ be real numbers such that
\begin{equation}
\alpha\in(0,1),
\qquad\quad
0<\mu_{1}<\mu_{2},
\qquad\quad
H>0.
\label{hp:accH}
\end{equation}

Let $\FF$ denote the set of functions $c:[0,+\infty)\to[\mu_{1},\mu_{2}]$ such that
\begin{equation}
|c(t)-c(s)|\leq H|t-s|^{\alpha}
\qquad
\forall(t,s)\in[0,+\infty)^{2}.
\label{hp:c-holder}
\end{equation}

The space $\FF$ is a compete metric space with respect to the usual distance
\begin{equation}
\dist_{\FF}(c_{1},c_{2}):=\sup_{t\geq 0}|c_{1}(t)-c_{2}(t)|
\qquad
\forall (c_{1},c_{2})\in\FF^{2}.
\nonumber
\end{equation}

\paragraph{\textmd{\textit{The space of admissible initial velocities}}}

For every $s>0$, let $\GG_{s,\infty}(A)$ denote the set of all sequences $\psi=\{\psi_{i}\}$ of real numbers such that
\begin{equation}
\|\psi\|_{\GG_{s,\infty}(A)}^{2}:=\sum_{i=0}^{\infty}\psi_{i}^{2}\exp\left(\lambda_{i}^{1/s}\log(1+\lambda_{i})\right)<+\infty.
\label{defn:GG-infty}
\end{equation}

It is possible to show that $\GG_{s,\infty}(A)$ is a complete metric space (and actually also a Hilbert space) with respect to the norm $\|\psi\|_{\GG_{s,\infty}(A)}$, and that
\begin{equation}
\GG_{s,\infty}(A)\subseteq\GG_{s,r}(A)
\qquad
\forall r>0.
\nonumber
\end{equation}

\paragraph{\textmd{\textit{Our result}}}

Let us consider the product space $\XX:=\FF\times\GG_{s,\infty}(A)$, which is a complete metric space with respect to the distance
\begin{equation}
\dist_{\XX}((c_{1},\psi_{1}),(c_{2},\psi_{2})):=\dist_{\FF}(c_{1},c_{2})+\|\psi_{1}-\psi_{2}\|_{\GG_{s,\infty}(A)}.
\nonumber
\end{equation}

For every $(c,\psi)\in\XX$, we consider equation (\ref{eqn:wave}) with initial data
\begin{equation}
u(0)=0,\qquad u'(0)=\psi.
\label{eqn:data}
\end{equation}

We are now ready to state our main result of this section.

\begin{thm}\label{thm:dgcs}

Let $\alpha$, $\beta$, $B$ be real numbers as in Theorem~\ref{thmbibl:dgcs}, and let $\mu_{1}$, $\mu_{2}$ and $H$ be three positive real numbers satisfying (\ref{hp:accH}). Let $\XX$ be the space defined above with $s=\beta$.

Then the set of all pairs $(c,\psi)\in\XX$ such that the corresponding solution to problem (\ref{eqn:wave})--(\ref{eqn:data}) satisfies (\ref{th:dgcs}) is residual in $\XX$.

\end{thm}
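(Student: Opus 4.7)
\emph{Strategy.} We follow the scheme of Theorem~\ref{thm:kohn-residual}. For every positive integer $k$, let $\CC_{k}\subseteq\XX$ be the set of pairs $(c,\psi)$ for which the corresponding very weak solution satisfies $\|u(t)\|_{\GG_{-B,k}(A)}^{2}+\|u'(t)\|_{\GG_{-B,k}(A)}^{2}\leq k$ for some $t\in[1/k,k]$. Since $R\mapsto\|\cdot\|_{\GG_{-B,R}(A)}$ is nonincreasing and condition (\ref{th:dgcs}) is precisely the statement that this quantity is $+\infty$ for every $t>0$ and every $R>0$, the complement of $\bigcup_{k}\CC_{k}$ coincides with the set of pairs satisfying (\ref{th:dgcs}). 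It is therefore enough to show that each $\CC_{k}$ is closed in $\XX$ with empty interior. Closedness is routine: standard continuous dependence of the ODEs (\ref{eqn:wave-ode}) on the coefficient (in $L^{\infty}$) and on the initial data gives mode-by-mode convergence of Fourier components, and Fatou's lemma applied to the defining series, after extracting a subsequence with $t_{n}\to t_{\infty}\in[1/k,k]$, transfers the bound to the limit.

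\emph{Empty interior.} Assume by contradiction that $B_{\XX}((c_{0},\psi_{0}),\ep_{0})\subseteq\CC_{k_{0}}$. By standard approximation (mollification of $c_{0}$ after a small inward contraction in $[\mu_{1},\mu_{2}]$, together with truncation of the Fourier expansion of $\psi_{0}$) we may assume $c_{0}$ is smooth, strictly satisfies (\ref{hp:c-holder}) with constant $(1-\ep_{1})H$ for some $\ep_{1}\in(0,1)$, and that $\psi_{0}$ has only finitely many nonzero Fourier components. Set $m^{2}:=c_{0}(0)$ and $L_{0}:=\|c_{0}'\|_{\infty}$, fix $\sigma>0$ small depending only on $\ep_{0},\ep_{1},L_{0},\alpha,k_{0},H$, and let $\eta:[0,+\infty)\to[0,1]$ be a smooth cutoff equal to $1$ on $[0,\sigma]$ and vanishing on $[2\sigma,+\infty)$. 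For every sufficiently large index $i$ (chosen large enough that $\psi_{0,i}=0$), set $\lambda:=\lambda_{i}$, $\ep:=c_{\gamma}\lambda^{-\alpha}$ with $c_{\gamma}>0$ a small absolute constant, and define the perturbation
\[
c_{\lambda}(t):=c_{0}(t)+\eta(t)\left(m^{2}\gamma(\ep,m\lambda t)-c_{0}(t)\right),
\qquad
\psi:=\psi_{0}+\delta_{i}e_{i},
\]
with $\delta_{i}:=(\ep_{0}/2)\exp(-\lambda_{i}^{1/\beta}\log(1+\lambda_{i})/2)$, so that $\|\delta_{i}e_{i}\|_{\GG_{\beta,\infty}(A)}=\ep_{0}/2$. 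Thanks to (\ref{defn:dgcs-gamma}) and (\ref{est:gamma-holder}), for $c_{\gamma}$ small enough the pointwise and $\alpha$-\holder\ contributions of the perturbation to $c_{0}$ are controlled, so that $c_{\lambda}\in\FF$ and $\distF(c_{\lambda},c_{0})<\ep_{0}/2$; hence $(c_{\lambda},\psi)\in B_{\XX}((c_{0},\psi_{0}),\ep_{0})$.

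\emph{Exponential growth and final contradiction.} Since $\psi_{0,i}=0$ and the system (\ref{eqn:wave-ode}) is uncoupled, the $i$-th component of the solution for $(c_{\lambda},\psi)$ solves $u_{i}''+\lambda^{2}c_{\lambda}(t)u_{i}=0$ with initial data $(0,\delta_{i})$. On $[0,\sigma]$ the coefficient is exactly $m^{2}\gamma(\ep,m\lambda t)$, so the explicit formula (\ref{defn:dgcs-w}) yields $u_{i}(t)=\delta_{i}w(\ep,m\lambda t)/(m\lambda)$, and in particular
\[
u_{i}(\sigma)^{2}+\lambda_{i}^{-2}(u_{i}'(\sigma))^{2}\geq C_{1}\delta_{i}^{2}\lambda_{i}^{-2}\exp(4c_{\gamma}m\sigma\lambda_{i}^{1-\alpha}).
\]
For $t\geq 2\sigma$ the coefficient has returned to the smooth $c_{0}(t)$, and a standard estimate on the energy $E_{i}(t):=(u_{i}'(t))^{2}+\lambda_{i}^{2}c_{0}(t)u_{i}(t)^{2}$, based on $|E_{i}'|\leq L_{0}\mu_{1}^{-1}E_{i}$, gives $E_{i}(t)\geq C_{2}E_{i}(\sigma)$ for every $t\in[1/k_{0},k_{0}]$, with $C_{2}>0$ independent of $\lambda_{i}$. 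Using the pointwise bound $u_{i}^{2}+(u_{i}')^{2}\geq E_{i}/(\lambda_{i}^{2}\mu_{2})$, the $i$-th term of $\|u(t)\|_{\GG_{-B,k_{0}}(A)}^{2}+\|u'(t)\|_{\GG_{-B,k_{0}}(A)}^{2}$ is bounded below by
\[
C_{3}\exp\left(4c_{\gamma}m\sigma\lambda_{i}^{1-\alpha}-2k_{0}\lambda_{i}^{1/B}-\lambda_{i}^{1/\beta}\log(1+\lambda_{i})-2\log\lambda_{i}\right),
\]
which, since $\beta,B>1/(1-\alpha)$, is dominated by its first term and tends to $+\infty$ as $i\to+\infty$, contradicting $(c_{\lambda},\psi)\in\CC_{k_{0}}$. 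The main technical delicacy is precisely the choice of the perturbation: by localizing the ``plateau'' of $\eta$ at $t=0$ we feed the data $(0,\delta_{i})$ directly into the resonant basic ODE for $w$, obtaining the exact exponentially growing solution on $[0,\sigma]$; the remaining nontrivial estimates are (i) the verification that $c_{\lambda}\in\FF$ on the transition region $[\sigma,2\sigma]$, and (ii) the free-evolution energy bound after $t=2\sigma$, both of which involve only the smooth coefficient $c_{0}$ and are therefore routine.
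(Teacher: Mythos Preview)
Your overall strategy matches the paper's: same sets $\CC_k$, same closedness argument, same use of the basic ingredient $\gamma$, $w$ localized near $t=0$, same energy propagation via Lemma~\ref{lemma:dgcs-energy} afterwards. There is, however, a genuine gap in the handling of the transition region $[\sigma,2\sigma]$.

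You claim $E_i(t)\geq C_2E_i(\sigma)$ for $t\in[1/k_0,k_0]$ ``based on $|E_i'|\leq L_0\mu_1^{-1}E_i$'', but this differential inequality holds only where the equation has coefficient $c_0$, i.e.\ for $t\geq 2\sigma$. On $[\sigma,2\sigma]$ the coefficient is still $c_\lambda$, and since $t\mapsto m^2\gamma(\ep,m\lambda t)$ has Lipschitz constant of order $\ep m^3\lambda\sim c_\gamma m^{3}\lambda^{1-\alpha}$, the only available bound there (Lemma~\ref{lemma:dgcs-energy} with $L\sim\lambda^{1-\alpha}$, or the analogous computation with your $E_i$) is
\[
E_i(2\sigma)\geq \mu_3\,E_i(\sigma)\exp\!\left(-C\,c_\gamma m^{2}\mu_1^{-1}\sigma\,\lambda^{1-\alpha}\right).
\]
Because $m^{2}=c_0(0)\geq\mu_1$, the constant in this negative exponent always exceeds (by a fixed numerical factor) the growth constant $4c_\gamma m$ you obtained on $[0,\sigma]$. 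With $\sigma>0$ \emph{fixed}, the net exponent is therefore negative and your final lower bound does not tend to $+\infty$: the transition layer eats the gain.

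The paper avoids this entirely. In the regularization step it arranges, in addition to smoothness and non-saturation, that $c_0$ is \emph{constant} on an initial interval $[0,\delta]$ with $\delta<1/k_0$. It then replaces $c_0$ by $m^{2}\gamma(\ep_n,m\lambda_n t)$ on $[0,\delta_n]$, where $\delta_n=\tfrac{2\pi}{m\lambda_n}\lfloor m\lambda_n\delta/(2\pi)\rfloor$ is chosen so that $m\lambda_n\delta_n\in 2\pi\z$. This makes $\gamma(\ep_n,m\lambda_n\delta_n)=1$ and $w(\ep_n,m\lambda_n\delta_n)=0$, so the modified coefficient is continuous at $\delta_n$ \emph{without any transition layer}, and for $t\geq\delta_n$ the coefficient is exactly $c_0$ with Lipschitz constant $L_0$ independent of $n$. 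Your cutoff construction can be repaired (assume $c_0$ constant near $0$ and shrink the transition interval to length $O(\lambda^{-1})$, so the accumulated loss is $\exp(-O(\lambda^{-\alpha}))\to 1$), but as written, with $\sigma$ fixed independently of $\lambda$, the argument does not close.
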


We conclude with some comments and an open problem in the critical case.

\begin{rmk}
\begin{em}

The choice to set one initial condition equal to zero in (\ref{eqn:data}) is just to emphasize that the pathology originates from the lack of regularity of the propagation speed, and not from a special choice of initial data. An analogous result holds true with minimal changes with initial data such as $u(0)=\psi$ and $u'(0)=0$, or $u(0)=\psi$ and $u'(0)=\psi_{0}$ where $\psi_{0}$ is a given fixed value, or with both initial conditions that are allowed to vary.

We also decided to work in the space $\GG_{s,\infty}(A)$ because data in this space are more regular than data in $\GG_{s,r_{0}}(A)$, but the same argument works with any fixed value of $r_{0}$. Finally, we observe that in (\ref{defn:GG-infty}) we can replace $\log(1+\lambda_{i})$ with any function that tends to $+\infty$ slower than any power.

\end{em}
\end{rmk}

\begin{rmk}
\begin{em}

As for the choice of the functional space for the propagation speed, we decided to put both conditions (\ref{hp:c-holder}) and  (\ref{hp:c12}) in order to emphasize that the derivative loss is possible also if $H$ is very small, and $\mu_{1}$ and $\mu_{2}$ are close to each other. 

On the other hand, this technique is quite flexible, and with minimal changes we can work in a space without any bound on the \holder\ constant, and even any bound from above on $c(t)$. In this case, however, we need to consider the full norm in the space of \holder\ functions, and not just the uniform norm, in order to have completeness.

\end{em}
\end{rmk}

\begin{rmk}
\begin{em}

Several extensions to Theorem~\ref{thmbibl:dgcs} and Theorem~\ref{thm:dgcs} are possible with minimal technical adjustments. For example, in Theorem~\ref{thmbibl:dgcs} we can ask that initial conditions satisfy (\ref{th:dgcs-data}) for every $\beta>(1-\alpha)^{-1}$, and the corresponding solution satisfies (\ref{th:dgcs}) for every $B>(1-\alpha)^{-1}$. One can consider also more general continuity moduli, and not just \holder\ continuity. For these questions, we refer to~\cite{gg:sns,gg:dgcs-strong}.

\end{em}
\end{rmk}

Let us consider the critical case where the propagation speed $c(t)$ is \holder\ continuous of order $\alpha$, and initial conditions are in $\GG_{s,r_{0}}(A)$ with order $s$ exactly equal to $(1-\alpha)^{-1}$ and some finite radius $r_{0}>0$. In this case the regularity result of~\cite[Theorem~2]{dgcs} guarantees that the solution remains in Gevrey spaces with the same order $s$ (and radius decreasing with time) only in a finite time interval $[0,t_{0}]$, with $t_{0}>0$ depending on $r_{0}$. The known counterexamples do not address the critical case, and this motivates the following question.

\begin{open}
\begin{em}

Is it possible to find a propagation speed $c(t)$ that is \holder\ continuous of some order $\alpha\in(0,1)$, an initial velocity $\psi\in\GG_{s,r_{0}}(A)$ with $s=(1-\alpha)^{-1}$ and some $r_{0}>0$, and a positive real number $t_{0}$, such that the corresponding solution to problem (\ref{eqn:wave})--(\ref{eqn:data}) satisfies (\ref{th:dgcs}), or any equivalent form of derivative loss, for every $t>t_{0}$?

\end{em}
\end{open}


\subsection{Technical preliminaries}

For the convenience of the reader, in this subsection we state and prove the basic energy estimate for solutions to an ordinary differential equation of the form (\ref{eqn:ode-cl}) with Lipschitz continuous coefficient $c(t)$. The same argument provides both an estimate from below and an estimate from above. In this paper, however, we need only the first one.

\begin{lemma}\label{lemma:dgcs-energy}

Let $v:[0,+\infty)\to\re$ be a solution to an equation of the form (\ref{eqn:ode-cl}), where $\lambda$ is a positive real number, and the propagation speed $c(t)$ satisfies the uniform bound (\ref{hp:c12}) and it is Lipschitz continuous with constant $L$. Let us set
\begin{equation}
\mu_{3}:=\min\{1,\mu_{1}\}\cdot\min\left\{1,\frac{1}{\mu_{2}}\right\},
\qquad
\mu_{4}:=\frac{L}{\mu_{1}},
\qquad
\mu_{5}:=\max\left\{1,\frac{1}{\mu_{1}}\right\}\cdot\max\{1,\mu_{2}\}.
\nonumber
\end{equation}

Then for every $0\leq t_{0}\leq t$ it turns out that
\begin{equation}
|v'(t)|^{2}+\lambda^{2}|v(t)|^{2}\geq \mu_{3}\left(|v'(t_{0})|^{2}+\lambda^{2}|v(t_{0})|^{2}\right)\exp(-\mu_{4}(t-t_{0})),
\nonumber
\end{equation}
and
\begin{equation}
|v'(t)|^{2}+\lambda^{2}|v(t)|^{2}\leq \mu_{5}\left(|v'(t_{0})|^{2}+\lambda^{2}|v(t_{0})|^{2}\right)\exp(\mu_{4}(t-t_{0})).
\nonumber
\end{equation}

\end{lemma}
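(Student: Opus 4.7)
The plan is to introduce the standard hyperbolic energy adapted to the coefficient $c(t)$, namely
\begin{equation}
E(t) := |v'(t)|^{2}+\lambda^{2}c(t)|v(t)|^{2},
\nonumber
\end{equation}
and control its evolution. Because $c$ is Lipschitz, it is absolutely continuous and differentiable almost everywhere, so $E(t)$ is absolutely continuous, and a direct computation using the ODE $v''(t)=-\lambda^{2}c(t)v(t)$ gives (for a.e.\ $t$)
\begin{equation}
E'(t)=2v'(t)v''(t)+\lambda^{2}c'(t)|v(t)|^{2}+2\lambda^{2}c(t)v(t)v'(t)=\lambda^{2}c'(t)|v(t)|^{2},
\nonumber
\end{equation}
so that the "dangerous" cross term cancels and only the derivative of $c$ survives.

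Next I would estimate the surviving term by the energy itself. Since $c(t)\geq\mu_{1}$, one has $\lambda^{2}|v(t)|^{2}\leq E(t)/\mu_{1}$, and combined with $|c'(t)|\leq L$ almost everywhere this gives $|E'(t)|\leq (L/\mu_{1})E(t)=\mu_{4}E(t)$. A standard Gronwall argument (applicable in the absolutely continuous setting) then yields both bounds
\begin{equation}
E(t_{0})\exp(-\mu_{4}(t-t_{0}))\leq E(t)\leq E(t_{0})\exp(\mu_{4}(t-t_{0}))
\qquad
\forall 0\leq t_{0}\leq t.
\nonumber
\end{equation}

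The final step is to translate these bounds on $E(t)$ into the stated bounds on the "plain" energy $|v'(t)|^{2}+\lambda^{2}|v(t)|^{2}$. Using (\ref{hp:c12}) one gets the two-sided comparison
\begin{equation}
\min\{1,\mu_{1}\}\cdot\left(|v'(t)|^{2}+\lambda^{2}|v(t)|^{2}\right)\leq E(t)\leq\max\{1,\mu_{2}\}\cdot\left(|v'(t)|^{2}+\lambda^{2}|v(t)|^{2}\right),
\nonumber
\end{equation}
applied once at time $t$ and once at time $t_{0}$. Multiplying out and recognizing that $\mu_{3}=\min\{1,\mu_{1}\}/\max\{1,\mu_{2}\}$ and $\mu_{5}=\max\{1,\mu_{2}\}/\min\{1,\mu_{1}\}$ produces exactly the two inequalities in the statement.

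There is no real obstacle here: the computation is textbook, and the main care is just to keep track of the conversion factors between $E(t)$ and the "plain" energy, and to handle the fact that $c$ is only Lipschitz (rather than $C^{1}$) by using absolute continuity instead of classical differentiability when applying Gronwall. Note also that the same derivation simultaneously produces the estimate from above, which is not used in the sequel but comes essentially for free, explaining the symmetric formulation in the statement.
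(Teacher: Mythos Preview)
Your proof is correct and follows essentially the same approach as the paper: introduce the hyperbolic energy (the paper calls it $F(t)$, you call it $E(t)$), compute its derivative, bound it by $\mu_{4}$ times itself, apply Gronwall, and then convert back to the plain energy via the two-sided comparison coming from (\ref{hp:c12}). The paper also handles the merely-Lipschitz case in the same way, by appealing to almost-everywhere differentiability (or an approximation argument).
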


\begin{proof}

Let us consider the classical energies
\begin{equation}
E(t):=|v'(t)|^{2}+\lambda^{2}|v(t)|^{2}
\qquad\mbox{and}\qquad
F(t):=|v'(t)|^{2}+\lambda^{2}c(t)|v(t)|^{2},
\nonumber
\end{equation}
sometimes called the ``Kovaleskyan'' and the ``hyperbolic'' energy. From the uniform bounds (\ref{hp:c12}) it follows that
\begin{equation}
\min\left\{1,\mu_{2}^{-1}\right\}F(t)\leq E(t)\leq\max\left\{1,\mu_{1}^{-1}\right\}F(t)
\qquad
\forall t\geq 0.
\label{EF-equiv}
\end{equation}

Assuming for a while that $c(t)$ is of class $C^{1}$, the hyperbolic energy is of class $C^{1}$ as well, and its time-derivative is
\begin{equation}
F'(t)=c'(t)\lambda^{2}|v(t)|^{2}
\qquad
\forall t\geq 0,
\nonumber
\end{equation}
and hence
\begin{equation}
|F'(t)|\leq\frac{|c'(t)|}{c(t)}\cdot c(t)\lambda^{2}|v(t)|^{2}\leq\frac{L}{\mu_{1}}F(t)
\qquad
\forall t\geq 0.
\label{est:F'}
\end{equation}

Integrating this differential inequality we deduce that
\begin{equation}
F(t_{0})\exp\left(-\frac{L}{\mu_{1}}(t-t_{0})\right)\leq F(t)\leq F(t_{0})\exp\left(\frac{L}{\mu_{1}}(t-t_{0})\right)
\qquad
\forall t\geq t_{0}.
\nonumber
\end{equation}

Combining with (\ref{EF-equiv}), we obtain the required estimates.

If $c(t)$ is just Lipschitz continuous, we obtain the same result through an approximation procedure, or by observing that $F(t)$ is Lipschitz continuous and its derivative satisfies (\ref{est:F'}) for almost every $t\geq 0$.
\end{proof}


\subsection{Proof of Theorem~\ref{thm:dgcs}}

\paragraph{\textmd{\textit{Quantitative non-pathological behavior}}}

Let $\CC$ denote the set of ``non-counterexamples'', namely the set of all pairs $(c,\psi)\in\XX$ such that the corresponding solution to problem (\ref{eqn:wave})--(\ref{eqn:data}) satisfies
\begin{equation}
\exists t>0\quad\exists R>0
\qquad
(u(t),u'(t))\in\GG_{-B,R}(A)\times\GG_{-B,R}(A).
\label{defn:dgcs-CC}
\end{equation}

We have to show that $\CC$ is a countable union of closed sets with empty interior. To this end, we state in a quantitative way the definition of $\CC$. For every positive integer $k$, we consider the set $\CC_{k}$ of all pairs $(c,\psi)\in\XX$ such that the corresponding solution to (\ref{eqn:wave})--(\ref{eqn:data}) satisfies
\begin{equation}
\exists t\in\left[1/k,k\right]
\qquad\quad
\|u(t)\|_{\GG_{-B,k}(A)}^{2}+\|u'(t)\|_{\GG_{-B,k}(A)}^{2}\leq k.
\nonumber
\end{equation}

In other words, now $t$ is confined in a compact set away from~0, we have fixed $R=k$, and we have prescribed a bound on the norm of $(u(t),u'(t))$ in the space of Gevrey ultradistributions $\GG_{-B,k}(A)$. 

The proof is complete if we show that $\CC$ is the union of all $\CC_{k}$'s, and that $\CC_{k}$ is a closed set with empty interior for every positive integer $k$.

\paragraph{\textmd{\textit{The set $\CC$ is the union of all $\CC_{k}$'s}}}

Let $(c,\psi)\in\CC$, so that the corresponding solution to problem (\ref{eqn:wave})--(\ref{eqn:data}) satisfies (\ref{defn:dgcs-CC}). Then it turns out that $(c,\psi)\in\CC_{k}$ provided that we choose $k$ such that
\begin{equation}
\frac{1}{k}\leq t\leq k,
\qquad
R\leq k,
\qquad
\|u(t)\|_{\GG_{-B,k}(A)}^{2}+\|u'(t)\|_{\GG_{-B,k}(A)}^{2}\leq k.
\nonumber
\end{equation}

To this end, we just need to observe that every $u\in\GG_{-B,R}(A)$ belongs also to the space $\GG_{-B,R'}(A)$ for every $R'\geq R$, and $\|u\|_{\GG_{-B,R'}(A)}\leq\|u\|_{\GG_{-B,R}(A)}$.

\paragraph{\textmd{\textit{The set $\CC_{k}$ is closed}}}

Let $k$ be a \emph{fixed} positive integer. Let $\{(c_{n},\psi_{n})\}\subseteq\CC_{k}$ be any sequence, and let us assume that $c_{n}(t)\to c_{\infty}(t)$ uniformly in $[0,+\infty)$, and that $\psi_{n}\to\psi_{\infty}$ in $\GG_{\beta,\infty}(A)$ (and in particular $\psi_{n}\to\psi_{\infty}$ in the component-wise sense). We claim that $(c_{\infty},\psi_{\infty})\in\CC_{k}$.

Let $u_{n}(t)$ denote the very weak solution to (\ref{eqn:wave})--(\ref{eqn:data}) with $c:=c_{n}$ and $\psi:=\psi_{n}$. From the definition of $\CC_{k}$ we know that for every $n$ there exists $t_{n}\in[1/k,k]$ such that
\begin{equation}
\|u_{n}(t_{n})\|_{\GG_{-B,k}(A)}^{2}+\|u_{n}'(t_{n})\|_{\GG_{-B,k}(A)}^{2}\leq k.
\label{est:dgcs-un}
\end{equation}

Up to subsequences (not relabeled) we can always assume that $t_{n}\to t_{\infty}\in[1/k,k]$. Let $u_{\infty}$ be the solution to problem  (\ref{eqn:wave})--(\ref{eqn:data}) with $c:=c_{\infty}$ and $\psi:=\psi_{\infty}$. Since solutions to linear ordinary differential equation of the form (\ref{eqn:wave-ode}) depend in a continuous way on the coefficient $c(t)$ and on initial data, we deduce that $u_{n}\to u_{\infty}$ in the component-wise sense, namely that
\begin{equation}
\langle u_{n}(t),e_{i}\rangle\to\langle u_{\infty}(t),e_{i}\rangle
\qquad
\forall i\in\n
\nonumber
\end{equation}
uniformly on compact sets (here, with a little abuse of notation, we used the scalar product with $e_{i}$ in order to denote the $i$-th element of a sequence that does not necessarily belong to $\ell^{2}$). Since the norm in $\GG_{-B,k}(A)$ is lower semicontinuous with respect to component-wise convergence, we can pass (\ref{est:dgcs-un}) to the limit and deduce that
\begin{equation}
\|u_{\infty}(t_{\infty})\|_{\GG_{-B,k}(A)}^{2}+\|u_{\infty}'(t_{\infty})\|_{\GG_{-B,k}(A)}^{2}\leq k,
\nonumber
\end{equation}
which completes the proof that $(c_{\infty},\psi_{\infty})\in\CC_{k}$.

\paragraph{\textmd{\textit{The set $\CC_{k}$ has empty interior}}}

Let us assume that there exist an integer $k_{0}\geq 1$, a pair $(c_{0},\psi_{0})\in\XX$, and a real number $\ep_{0}>0$ such that $B_{\XX}((c_{0},\psi_{0}),\ep_{0})\subseteq\CC_{k_{0}}$. 

\subparagraph{\textmd{\textit{Regularization of the center}}}

Up to a small modification of $c_{0}$, and a small reduction of the radius $\ep_{0}$, we can assume that $c_{0}$ has the following further properties.
\begin{itemize}

\item  It is Lipschitz continuous with some constant $L_{0}$.

\item  It is constant in the interval $[0,\delta]$ for some $\delta\in(0,1/k_{0})$.

\item  It does not saturate the inequalities in (\ref{hp:c12}) and (\ref{hp:c-holder}), namely there exists $\ep_{1}>0$ such that
\begin{equation}
\mu_{1}+\ep_{1}\leq c_{0}(t)\leq \mu_{2}-\ep_{1}
\qquad
\forall t\geq 0,
\label{sat:cn}
\end{equation}
and
\begin{equation}
|c_{0}(t)-c_{0}(s)|\leq(1-\ep_{1})H|t-s|^{\alpha}
\qquad
\forall(t,s)\in[0,+\infty)^{2}.
\label{sat:cn-holder}
\end{equation}

\end{itemize}

\subparagraph{\textmd{\textit{Use of rescaled basic ingredient}}}

Let us modify $c_{0}$ in the initial interval $[0,\delta]$, and let us modify $\psi_{0}$ in just one Fourier component, in such a way that the modified pair still belongs to $B_{\XX}((c_{0},\psi_{0}),\ep_{0})$, but it does not belong to $\CC_{k_{0}}$. This would give a contradiction.

In order to modify $c_{0}$, we choose $m:=[c_{0}(0)]^{1/2}$, and for every positive integer $n$ we set
\begin{equation}
\ep_{n}:=\frac{\ep_{1}H}{m^{\alpha+2}H_{\gamma}}\cdot\frac{1}{\lambda_{n}^{\alpha}},
\qquad\qquad
\delta_{n}:=\frac{2\pi}{m\lambda_{n}}\left\lfloor\frac{m\lambda_{n}\delta}{2\pi}\right\rfloor,
\label{defn:ep-delta}
\end{equation}
where $H_{\gamma}$ is the constant that appears in (\ref{est:gamma-holder}). Then we consider the function
\begin{equation}
c_{n}(t):=\left\{
\begin{array}{l@{\qquad}l}
m^{2}\gamma(\ep_{n},m\lambda_{n} t) & \mbox{if }0\leq t\leq\delta_{n}, \\[0.5ex]
c_{0}(t) & \mbox{if }t\geq\delta_{n}.
\end{array}\right.
\nonumber
\end{equation}

Since $\delta_{n}\leq\delta$, and $m\lambda_{n}\delta_{n}$ is an integer multiple of $2\pi$, from (\ref{defn:dgcs-gamma}) we obtain that
\begin{equation}
m^{2}\gamma(\ep_{n},m\lambda_{n}\delta_{n})=m^{2}=c_{0}(0)=c_{0}(\delta_{n}),
\nonumber
\end{equation}
and therefore the function $c_{n}(t)$ is well-defined and continuous in the half-line $t\geq 0$.

In order to modify $\psi_{0}$, we consider the vector $\psi_{n}\in\GG_{\beta,\infty}(A)$ obtained from $\psi_{0}$ by replacing the $n$-th Fourier component with 
$$\frac{1}{1+\lambda_{n}}\cdot\exp\left(-\lambda_{n}^{1/\beta}\log(1+\lambda_{n})\right),$$
and leaving the other components unchanged. In this way it turns out that $\psi_{n}\to\psi_{0}$ in $\GG_{\beta,\infty}(A)$.

\subparagraph{\textmd{\textit{Final contradiction: $(c_{n},\psi_{n})\in B_{\XX}((c_{0},\psi_{0}),\ep_{0})$ for $n$ large enough}}}

Let us check that $c_{n}\in\FF$ for $n$ sufficiently large, and $c_{n}\to c_{0}$ in $\FF$. First of all, we observe that $\ep_{n}\to 0$ as $n\to +\infty$, and therefore $c_{n}\to c_{0}$ uniformly in $[0,+\infty)$. Due to (\ref{sat:cn}), it follows that $c_{n}(t)$ satisfies the uniform bounds (\ref{hp:c12}) when $n$ is large enough. It remains to show that $c_{n}$ is \holder\ continuous of order $\alpha$ with constant $H$. Due to (\ref{sat:cn-holder}), it is enough to show that $c_{n}-c_{0}$ is \holder\ continuous of order $\alpha$ with constant $\ep_{1}H$. To this end, we recall that $m^{2}=c_{0}(0)=c_{0}(t)$ for every $t\in[0,\delta_{n}]$, and therefore  
\begin{equation}
c_{n}(t)-c_{0}(t)=\left\{
\begin{array}{l@{\qquad}l}
m^{2}\gamma(\ep_{n},m\lambda_{n} t)-m^{2} & \mbox{if }0\leq t\leq\delta_{n}, \\[0.5ex]
0 & \mbox{if }t\geq\delta_{n}.
\end{array}\right.
\nonumber
\end{equation}

At this point, the conclusion follows from (\ref{est:gamma-holder}) and our definition (\ref{defn:ep-delta}) of $\ep_{n}$.

Since we already observed that $\psi_{n}\to\psi_{0}$ in $\GG_{\beta,\infty}(A)$, this shows that $(c_{n},\psi_{n})\in B_{\XX}((c_{0},\psi_{0}),\ep_{0})$ when $n$ is large enough.

\subparagraph{\textmd{\textit{Final contradiction: $(c_{n},\psi_{n})\not\in C_{k_{0}}$ for $n$ large enough}}}

Let us consider the solution $u_{n}$ to problem (\ref{eqn:wave})--(\ref{eqn:data}) with $c:=c_{n}$ and $\psi:=\psi_{n}$. In order to estimate from below the norm of $u_{n}(t)$, we just consider its $n$-th Fourier component $v_{n}(t):=\langle u_{n}(t),e_{n}\rangle$, which is a solution to the scalar ordinary differential equation
\begin{equation}
v_{n}''(t)+c_{n}(t)\lambda_{n}^{2}v_{n}(t)=0,
\nonumber
\end{equation}
with initial data
\begin{equation}
v_{n}(0)=0,
\qquad
v_{n}'(0)=\frac{1}{1+\lambda_{n}}\exp\left(-\lambda_{n}^{1/\beta}\log(1+\lambda_{n})\right).
\nonumber
\end{equation}

In the interval $[0,\delta_{n}]$ we know that 
\begin{equation}
v_{n}(t)=\frac{1}{m\lambda_{n}}\cdot\frac{1}{1+\lambda_{n}}\exp\left(-\lambda_{n}^{1/\beta}\log(1+\lambda_{n})\right)\cdot w(\ep_{n},m\lambda_{n}t),
\nonumber
\end{equation}
where $w$ is defined by (\ref{defn:dgcs-w}).  Recalling that $m\lambda_{n}\delta_{n}$ is an integer multiple of $2\pi$, from this explicit expression we deduce that $v_{n}(\delta_{n})=0$ and
\begin{equation}
v_{n}'(\delta_{n})=\frac{1}{1+\lambda_{n}}\exp\left(-\lambda_{n}^{1/\beta}\log(1+\lambda_{n})\right)\cdot\exp\left(2\ep_{n}m\lambda_{n}\delta_{n}\right).
\nonumber
\end{equation}

From (\ref{defn:ep-delta}) it turns out that, when $n$ is large enough, $2\delta_{n}\geq\delta$ and $\ep_{n}m\delta\geq 2r_{0}\lambda_{n}^{-\alpha}$, where $r_{0}$ is a positive constant independent of $n$. It follows that
\begin{eqnarray}
v_{n}'(\delta_{n}) & \geq & \exp\left(2r_{0}\lambda_{n}^{1-\alpha}-\lambda_{n}^{1/\beta}\log(1+\lambda_{n})-\log(1+\lambda_{n})\right)  \nonumber
\\[0.5ex]
& \geq & \exp\left(r_{0}\lambda_{n}^{1-\alpha}-\lambda_{n}^{1/\beta}\log(1+\lambda_{n})\right),
\nonumber
\end{eqnarray}
where again in the last inequality we exploited that $n$ is large enough.

For $t\geq\delta_{n}$ we know that $c_{n}(t)$ coincides with $c_{0}(t)$, and therefore it is Lipschitz continuous with constant $L_{0}$. This allows to apply Lemma~\ref{lemma:dgcs-energy}, from which we deduce that
\begin{eqnarray}
|v_{n}'(t)|^{2}+\lambda_{n}^{2}|v_{n}(t)|^{2} & \geq & \mu_{3}\left(|v_{n}'(\delta_{n})|^{2}+\lambda_{n}^{2}|v_{n}(\delta_{n})|^{2}\right)\exp\left(-\mu_{4}(t-\delta_{n})\strut\right) \nonumber
\\[0.5ex]
& \geq & \mu_{3}\exp\left(2r_{0}\lambda_{n}^{1-\alpha}-2\lambda_{n}^{1/\beta}\log(1+\lambda_{n})-\mu_{4}t\right),
\nonumber
\end{eqnarray}
where $\mu_{3}$ and $\mu_{4}$ are two constants independent of $n$.

Since $1/k_{0}\geq\delta\geq\delta_{n}$, this estimate holds true for every $t\in[1/k_{0},k_{0}]$, and in particular
\begin{eqnarray}
|v_{n}'(t)|^{2}+|v_{n}(t)|^{2} & \geq & \frac{1}{\lambda_{n}^{2}}\left(|v_{n}'(t)|^{2}+\lambda_{n}^{2}|v_{n}(t)|^{2}\right) \nonumber  \\
& \geq & \mu_{3}\exp\left(2r_{0}\lambda_{n}^{1-\alpha}-2\lambda_{n}^{1/\beta}\log(1+\lambda_{n})-\mu_{4}k_{0}-2\log\lambda_{n}\right) \nonumber  \\[0.5ex]
& \geq & \mu_{3}\exp\left(r_{0}\lambda_{n}^{1-\alpha}-2\lambda_{n}^{1/\beta}\log(1+\lambda_{n})\right)  \nonumber
\end{eqnarray}
for every $t\in[1/k_{0},k_{0}]$, where again the last inequality holds true when $n$ is sufficiently large. From this last estimate we conclude that
\begin{equation}
\|u_{n}'(t)\|_{\GG_{-B,k_{0}}(A)}^{2}+\|u_{n}(t)\|_{\GG_{-B,k_{0}}(A)}^{2}\geq \mu_{3}\exp\left(r_{0}\lambda_{n}^{1-\alpha}-2\lambda_{n}^{1/\beta}\log(1+\lambda_{n})-2k_{0}\lambda_{n}^{1/B}\right)
\nonumber
\end{equation}
for every $t\in[1/k_{0},k_{0}]$. Since $1/\beta<1-\alpha$ and $1/B<1-\alpha$, the right-hand side tends to $+\infty$ as $n\to +\infty$, and therefore the left-hand side is larger than $k_{0}$ for $n$ large enough. This shows that $(c_{n},\psi_{n})\not\in C_{k_{0}}$ when $n$ is sufficiently large, which completes the proof.


\setcounter{equation}{0}
\section{A continuity/transport equation}\label{sec:alberti}

\subsection{Statement of the problem and previous result}

\paragraph{\textmd{\textit{General setting}}}

Let us consider the transport equation
\begin{equation}
\partial_{t}\rho+u\cdot\nabla\rho=0
\qquad
t\geq 0,\quad x\in\re^{d},
\label{eqn:transport}
\end{equation}
where $\partial_{t}$ denotes the partial derivative with respect to the time variable $t$, the dot denotes the scalar product in $\re^{d}$, and $\nabla\rho$ denotes the gradient of the scalar function $\rho$ with respect to the space variables $x\in\re^{d}$. The so-called velocity field $u$ is a given function $u:[0,+\infty)\times\re^{d}\to\re^{d}$ satisfying the divergence-free condition
\begin{equation}
\div u=0
\qquad
\forall t\geq 0,\quad\forall x\in\re^{d},
\label{eqn:div-free}
\end{equation}
in some sense (again $\div$ denotes the divergence with respect to the space variables), and one looks for a solution $\rho:[0,+\infty)\times\re^{d}\to\re$ to (\ref{eqn:transport}) that satisfies the initial condition
\begin{equation}
\rho(0,x)=\theta(x)
\qquad
\forall x\in\re^{d},
\label{eqn:transport-data}
\end{equation}
where $\theta:\re^{d}\to\re$ is a given function. Due to (\ref{eqn:div-free}), the transport equation can be rewritten as $\partial_{t}u+\div(\rho u)=0$, and in this form it is usually called a continuity equation.

\paragraph{\textmd{\textit{Previous result}}}

If the velocity field is Lipschitz continuous with respect to the space variables, uniformly with respect to time, then problem (\ref{eqn:transport})--(\ref{eqn:transport-data}) has a unique solution, and roughly speaking this solution has the same space regularity of the initial datum $\theta$, with an estimate of the norms. This follows from the classical method of characteristics, even without the divergence-free condition.

When the velocity field is not Lipschitz continuous, a well-posedness theory for weak solutions has been developed, provided that $u$ has some Sobolev or $BV$ regularity in the space variables, and satisfies (\ref{eqn:div-free}) or at least some bound from below on the divergence in order to prevent concentration phenomena. For more details, we refer to the seminal papers~\cite{diperna-lions,ambrosio}, and to the recent survey~\cite{AC:survey}.

Nevertheless, these weak solutions can exhibit a severe derivative loss, as shown in the recent result quoted below, where the velocity field $u$ belongs to $W^{1,p}(\re^{d})$ for every $p\geq 1$ (but not for $p=+\infty$, of course) uniformly in time, the initial condition $\theta$ is of class $C^{\infty}$ with compact support, and for all positive times the solution $\rho$ ``has lost all its space derivatives'', in the sense that it does not belong to the space $H^{s}(\re^{d})$ for every positive (real) number $s$. We refer to~\cite{alberti:controesempio} and to the references quoted therein for the theory of the fractional Sobolev spaces $H^{s}(\re^{d})$ and their homogeneous counterparts $\dot{H}^{s}(\re^{d})$, as well as for the definition of the norm in these spaces. In this section $B_{d}(x,r)$ denotes the ball with center in a point $x\in\re^{d}$ and radius $r$.

\begin{thmbibl}[see~{\cite[Theorem~1]{alberti:controesempio}}]\label{thmbibl:alberti}

For every integer $d\geq 2$ there exist a measurable vector field $u:[0,+\infty)\times\re^{d}\to\re^{d}$, and a measurable solution $\rho:[0,+\infty)\times\re^{d}\to\re$ to the transport equation (\ref{eqn:transport}), satisfying the following conditions.
\begin{itemize}

\item  \emph{(Compact support in space).} There exists $R>0$ such that
\begin{equation}
\|u(t,x)\|+|\rho(t,x)|=0
\qquad
\forall t\geq 0,\quad\forall x\in\re^{d}\setminus B_{d}(0,R).
\nonumber
\end{equation}

\item  \emph{(Global boundedness).} There exists a real number $M>0$ such that
\begin{equation}
\|u(t,x)\|+|\rho(t,x)|\leq M
\qquad
\forall t\geq 0,\quad\forall x\in\re^{d}.
\nonumber
\end{equation}

\item  \emph{(Sobolev regularity in space of the velocity field).} For every $p\in[1,+\infty)$ there exists a real constant $M_{p}$ such that
\begin{equation}
\|u(t,x)\|_{W^{1,p}(\re^{d})}\leq M_{p}
\qquad
\forall t\geq 0.
\nonumber
\end{equation}

\item  \emph{(Divergence-free condition).} The vector field $u$ satisfies (\ref{eqn:div-free}) as an equality in $L^{p}(\re^{d})$ for every $t\geq 0$.

\item  \emph{(Smoothness of initial data).} The initial condition $\theta(x):=\rho(0,x)$ is of class $C^{\infty}$ in $\re^{d}$.

\item  \emph{(Severe derivative loss for positive times).} For every $t>0$ and every $s>0$, the function $x\to\rho(t,x)$ does not belong to $H^{s}(\re^{d})$.

\end{itemize} 

\end{thmbibl}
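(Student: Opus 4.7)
Following the Baire category template developed in Sections~\ref{sec:kohn} and~\ref{sec:dgcs}, I would prove a residual version of Theorem~\ref{thmbibl:alberti} that implies the stated existence. The ambient space $\XX$ should consist of pairs $(u,\theta)$ with $u$ a divergence-free measurable vector field on $[0,+\infty)\times\re^{d}$ supported in $[0,+\infty)\times B_{d}(0,R)$, uniformly bounded in $L^{\infty}$, and with $\sup_{t\ge 0}\|u(t,\cdot)\|_{W^{1,p_{n}}(\re^{d})}\le M_{n}$ for a fixed sequence $p_{n}\uparrow+\infty$ and suitably growing weights $M_{n}$, paired with an initial datum $\theta$ in a fixed Banach space of smooth compactly supported functions (for concreteness, one may take a Gevrey space on $B_{d}(0,R)$). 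Combining these conditions in a Fr\'echet-type metric on $\XX$ yields a complete metric space. The set $\CC$ of non-counterexamples admits the decomposition $\CC=\bigcup_{k}\CC_{k}$, where $\CC_{k}$ is the set of $(u,\theta)\in\XX$ for which the DiPerna--Lions solution $\rho$ satisfies $\|\rho(t,\cdot)\|_{H^{1/k}(\re^{d})}\le k$ for some $t\in[1/k,k]$. Closedness of each $\CC_{k}$ follows from the stability of renormalized solutions under strong convergence of $u$, together with the lower semicontinuity of fractional Sobolev norms with respect to the relevant convergence.

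The core of the proof is showing that each $\CC_{k}$ has empty interior. The basic ingredient is a single-scale localized shear flow $V_{\ell}(x):=\chi(x)(f(\ell x_{2}),0,\dots,0)$, where $\chi$ is a fixed smooth cutoff and $f$ is a fixed smooth periodic profile: $V_{\ell}$ is divergence-free, its $W^{1,p}$ norm is controlled by a fixed constant times $\ell^{1-1/p}$, and when activated for a time $\delta\sim 1/\ell$ it produces order-one lateral displacements at spatial scale $1/\ell$. Given a putative interior pair $(u_{0},\theta_{0})$ with $B_{\XX}((u_{0},\theta_{0}),\ep_{0})\subseteq\CC_{k_{0}}$, I would first invoke a density lemma, in the spirit of Lemma~\ref{lemma:approx}, to assume that $u_{0}$ is Lipschitz continuous in space, that the associated classical flow is well defined, and that $\theta_{0}$ has nonvanishing gradient at some regular point $x_{0}$ of the flow. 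Then I would perturb $u_{0}$ by adding a rescaled and spatially localized copy of $V_{\ell}$ in a short time window inside $[0,1/k_{0}]$, centered on the trajectory through $x_{0}$, with amplitude tuned so that the perturbation has size at most $\ep_{0}$ in every weighted $W^{1,p_{n}}$ norm, and with $\ell$ so large that the resulting density at every $t\in[1/k_{0},k_{0}]$ carries oscillations at scale $1/\ell$ whose $H^{1/k_{0}}$ norm exceeds $k_{0}$, contradicting membership in $\CC_{k_{0}}$.

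The main obstacle is reconciling the competing scalings: the $W^{1,p}$ norm of the single-scale perturbation grows like $\ell^{1-1/p}$ as $p\to+\infty$, so the weights $M_{n}$ in the definition of $\XX$ must grow fast enough that the perturbation is admissible in every $W^{1,p_{n}}$ simultaneously, while still generating arbitrarily high-frequency oscillations in $\rho$. A secondary technical point is the propagation of the high-frequency content of $\rho$ past the insertion window: after the perturbation ends, the full velocity field coincides again with a Lipschitz one, so the subsequent flow is a bi-Lipschitz change of coordinates, which cannot restore $H^{s}$ regularity but does distort the spectrum in a way that must be quantitatively controlled in order to obtain the required lower bound throughout $[1/k_{0},k_{0}]$. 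Closedness of $\CC_{k}$, while standard in spirit, also deserves care, since DiPerna--Lions stability is most naturally formulated in a definite strong topology whereas fractional Sobolev norms are lower semicontinuous with respect to weaker ones, and the two must be matched through the choice of metric on $\XX$.
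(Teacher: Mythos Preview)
Your Baire-category framework and the decomposition $\CC=\bigcup_{k}\CC_{k}$ match the paper's proof of its own Theorem~\ref{thm:transport} (which is the residual strengthening of the cited Theorem~\ref{thmbibl:alberti}). The difference lies entirely in the ``empty interior'' step, and here the paper takes a much simpler route than you propose.

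Your plan is to build the perturbation from scratch as a single-scale localized shear $V_{\ell}$, to place it \emph{on top of} the regularized center $(u_{0},\theta_{0})$ along a trajectory through a point where $\nabla\theta_{0}\neq 0$, and then to track the high-frequency content of $\rho$ past the insertion window through the subsequent bi-Lipschitz flow. You correctly identify the two resulting headaches: reconciling the $W^{1,p}$ growth of the shear with the weights defining $\XX$, and controlling spectral distortion after the shear is switched off. The paper sidesteps both. It takes as a black box the smooth pair $(u_{*},\theta_{*})$ of Theorem~\ref{thmbibl:alberti-basic}, whose solution $\rho_{*}$ already has \emph{exponential} growth of $\dot H^{s}$ norms, rescales it via $u_{n}(t,x)=n\lambda_{n}u_{*}(nt,(x-x_{0})/\lambda_{n})$ and $\theta_{n}(x)=\gamma_{n}\theta_{*}((x-x_{0})/\lambda_{n})$, and---this is the key trick you miss---places it in a small ball around a point $x_{0}$ lying \emph{outside} the support of $(u_{0},\theta_{0})$ (after first shrinking that support slightly in the regularization step). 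The supports being disjoint, the perturbed solution is exactly $\rho_{0}+\rho_{n}$ with no interaction whatsoever, so there is no propagation issue to control; the exponential growth of $\|\rho_{n}(t,\cdot)\|_{\dot H^{1/k_{0}}}$ then overwhelms both the bounded contribution of $\rho_{0}$ and the decaying prefactor $\gamma_{n}\lambda_{n}^{d/2}$.

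Two further points. First, the paper perturbs \emph{both} $u_{0}$ and $\theta_{0}$, whereas you perturb only the velocity field and rely on $\nabla\theta_{0}\neq 0$; the paper's choice is what makes the disjoint-support trick possible. Second, the paper's choice of the quantitative $W^{1,p}$ constraint is the explicit bound $\|D_{x}u\|_{L^{p}}\le p^{4}$, and the scaling parameters $\lambda_{n},\gamma_{n}$ are chosen so that $n^{2}\lambda_{n}^{d/p}\le \ell_{d}\,p^{4}$ uniformly in $p\ge 1$ (e.g.\ $\lambda_{n}=\exp(-n^{1/2})$, $\gamma_{n}=\exp(-n^{2/3})$); this replaces your more abstract ``weights $M_{n}$ growing fast enough'' with a concrete computation. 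Your approach is plausible in outline, but by building the basic ingredient yourself and coupling it to the base flow you are essentially redoing part of the work already packaged in Theorem~\ref{thmbibl:alberti-basic}, and the paper's disjoint-support device shows that this coupling is entirely avoidable.
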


We observe that in the previous result the loss of regularity is localized in a neighborhood of a point, in the sense that both $u$ and $\rho$ are actually of class $C^{\infty}$ for $t\geq 0$ and $x\neq 0$. We observe also that the result is optimal in the sense that it is known that some derivative ``of logarithmic order'' survives during the evolution (for more details, we refer to~\cite{brue-nguyen:log-der} and to the references quoted therein).

\paragraph{\textmd{\textit{The basic ingredient}}}

The proof of Theorem~\ref{thmbibl:alberti} is based on the following result, which we also need in our construction.

\begin{thmbibl}[see~{\cite{alberti:exp}} and~{\cite[Theorem~8 and Remark~9]{alberti:controesempio}}]\label{thmbibl:alberti-basic}

For every integer $d\geq 2$ there exist a divergence-free vector field $u_{*}:[0,+\infty)\times\re^{d}\to\re^{d}$ of class $C^{\infty}$, and an initial condition $\theta_{*}:\re^{d}\to\re$ of class $C^{\infty}$ with compact support, such that the corresponding solution $\rho_{*}:[0,+\infty)\times\re^{d}\to\re$ to problem (\ref{eqn:transport})--(\ref{eqn:transport-data}) is of class $C^{\infty}$ and satisfies the following conditions.
\begin{itemize}

\item  \emph{(Compact support in space).} There exists $R>0$ such that
\begin{equation}
\|u_{*}(t,x)\|+|\rho_{*}(t,x)|=0
\qquad
\forall t\geq 0,\quad\forall x\in\re^{d}\setminus B_{d}(0,R).
\nonumber
\end{equation}

\item  \emph{(Global boundedness).} There exists a real number $M>0$ such that
\begin{equation}
\|u_{*}(t,x)\|+\|D_{x} u_{*}(t,x)\|+|\rho_{*}(t,x)|\leq M
\qquad
\forall t\geq 0,\quad\forall x\in\re^{d}.
\label{th:bounds-ur}
\end{equation}

\item  \emph{(Exponential growth of homogeneous Sobolev norms).} There exists a real number $c>0$ with the following property: for every real number $s\in(0,2)$ there exists a real number $C_{s}>0$ such that
\begin{equation}
\|\rho_{*}(t,x)\|_{\dot{H}^{s}(\re^{d})}\geq C_{s}\exp(cst)
\qquad
\forall t\geq 0.
\label{th:rho-growth}
\end{equation}

\end{itemize}

\end{thmbibl}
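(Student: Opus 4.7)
The plan is to construct, for $d=2$ (higher dimensions follow by a trivial product construction in the extra variables), a smooth, compactly supported, time-periodic, divergence-free velocity field $u_{*}$ whose time-$T$ flow $\Phi_{T}$ acts on an invariant square $Q\subseteq\re^{2}$ as a smooth realization of a hyperbolic toral automorphism. Iterating this flow produces exponentially mixing dynamics on $Q$ at some rate $c>0$, and transporting a fixed smooth bump $\theta_{*}$ supported in $Q$ yields the desired lower bound on the Sobolev norms of the solution.

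First I would construct $u_{*}$ by alternating two compactly supported smooth shears: on $[0,T/2]$ take $u_{*}(t,x,y)=(\varphi(y),0)$, and on $[T/2,T]$ take $u_{*}(t,x,y)=(0,\psi(x))$, where $\varphi,\psi\in C^{\infty}_{c}(\re)$ vanish outside $Q$ and coincide with $z\mapsto\kappa z$ on a large subinterval. Each shear is the symplectic gradient of a Hamiltonian, hence divergence-free, and preserves $Q$. On the central subcell where both shears are linear, the time-$T$ flow is the linear map
\[
M=\begin{pmatrix} 1 & a \\ a & 1+a^{2}\end{pmatrix},\qquad a:=\kappa T/2,
\]
which has $\det M=1$ and eigenvalues $\lambda^{\pm 1}$ with $\lambda>1$ for $a\neq 0$. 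Setting $c:=(\log\lambda)/T$ and periodising in time with smooth gluing at the endpoints produces $u_{*}\in C^{\infty}([0,+\infty)\times\re^{2})$ with the required compact spatial support and uniform bounds on $\|u_{*}\|$ and $\|D_{x}u_{*}\|$.

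Second, since $u_{*}$ is smooth the solution is the classical pushforward $\rho_{*}(t,x)=\theta_{*}(\Phi_{t}^{-1}(x))$, whose support remains inside $Q$ uniformly in $t$. Choosing $\theta_{*}$ as a nontrivial smooth bump supported in the linear central cell so that, for a finite window of iterations, $\Phi_{nT}$ acts on $\operatorname{supp}\theta_{*}$ exactly as $M^{n}$, Plancherel and the change of variable $\eta=M^{n}\xi$ (using the symmetry of $M$) give
\[
\|\rho_{*}(nT,\cdot)\|_{\dot{H}^{s}(\re^{2})}^{2}=\int_{\re^{2}}|M^{-n}\eta|^{2s}\,|\hat{\theta}_{*}(\eta)|^{2}\,d\eta\geq C_{s}\,\lambda^{2sn}=C_{s}\,e^{2cs\cdot nT},
\]
where the lower bound is obtained by restricting the integral to a narrow cone around the stable eigendirection of $M$, on which $|M^{-n}\eta|\geq\lambda^{n}|\eta|$, and by exploiting that $\hat{\theta}_{*}$ has nontrivial content in every open cone. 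Interpolation with the $L^{\infty}$ bound in (\ref{th:bounds-ur}) then upgrades the estimate from integer $nT$ to every real $t\geq 0$.

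The main obstacle is reconciling \emph{compact spatial support with exponential growth valid for all $t\geq 0$}. A smooth flow with uniformly bounded derivative and support in a fixed ball cannot produce pure linear hyperbolic stretching indefinitely, since the forward orbit of $\operatorname{supp}\theta_{*}$ would eventually exit any bounded region. The resolution is the folding behaviour of the alternating shears near $\partial Q$, analogous to a smooth realization of the Arnold cat map, which stretches \emph{and wraps} the level sets of $\rho_{*}$ inside $Q$, producing filaments of exponentially decreasing transverse thickness. Turning this geometric picture into the uniform bound (\ref{th:rho-growth}) is the technical heart of the argument: the uniformity of $c$ in $s\in(0,2)$ is a direct consequence of the single stretching factor $\lambda$ of $M$, while sustaining the exponential growth past the onset of folding requires exploiting the equidistribution of unstable manifolds under the hyperbolic dynamics on $Q$.
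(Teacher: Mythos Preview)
The paper does not prove this statement: it is a \texttt{thmbibl}, quoted verbatim as a black box from~\cite{alberti:exp} and~\cite[Theorem~8 and Remark~9]{alberti:controesempio}, and used only as the ``basic ingredient'' in the proof of Theorem~\ref{thm:transport}. So there is no proof in the paper to compare yours against; you are in effect trying to reconstruct the argument of Alberti--Crippa--Mazzucato.

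Your sketch has the right geometric intuition (alternating compactly supported shears with a linear hyperbolic core), and your Fourier computation for $\|\rho_{*}(nT,\cdot)\|_{\dot H^{s}}$ is correct \emph{provided} $\Phi_{nT}$ acts on $\operatorname{supp}\theta_{*}$ as the linear map $M^{n}$. But this can hold only for finitely many $n$: under iteration of $M$, any compact set is stretched along the unstable direction and eventually leaves the central cell where the shears are linear, after which the identity $\rho_{*}(nT,x)=\theta_{*}(M^{-n}x)$ is simply false. You acknowledge this yourself as the ``main obstacle'' and then appeal to ``folding behaviour'' and ``equidistribution of unstable manifolds'' without any quantitative estimate. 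That is exactly the missing piece: once folding starts, the filaments of $\rho_{*}$ overlap and interfere, and extracting a uniform lower bound $C_{s}e^{cst}$ for \emph{all} $t\ge 0$ requires a genuine mixing analysis (in~\cite{alberti:exp} this is done through a carefully designed self-similar construction, not a bare cat-map argument). As written, your argument yields~(\ref{th:rho-growth}) only on a bounded time interval, which is worthless for the application in Section~\ref{sec:alberti}, where one rescales time by $t\mapsto nt$ and sends $n\to+\infty$.
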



\subsection{Choice of the functional space and our result}

In this paper we show that the derivative loss of Theorem~\ref{thmbibl:alberti} is the common behavior, namely it happens for the ``generic'' choice of the velocity field $u$ and of the smooth initial condition $\theta$.

\paragraph{\textmd{\textit{The space of admissible initial data}}}

Let $\FF$ denote the set of all functions $\theta:\re^{d}\to\re$ of class $C^{\infty}$ such that $\theta(x)=0$ for every $x\in\re^{d}$ with $\|x\|\geq 1$. For every integer $k\geq 0$ we set
\begin{equation}
\|\theta\|_{k,\infty}:=\sum_{|\alpha|=k}\sup_{\|x\|\leq 1}\left|\partial^{\alpha}\theta(x)\right|,
\nonumber
\end{equation}
where the sum ranges over all multi-indices $\alpha=(\alpha_{1},\ldots,\alpha_{d})\in\n^{d}$ with $|\alpha|:=\alpha_{1}+\cdots+\alpha_{d}=k$. Finally, we set
\begin{equation}
\distF(\theta_{1},\theta_{2}):=\sum_{k=0}^{\infty}\frac{1}{2^{k}}\arctan\left(\|\theta_{1}-\theta_{2}\|_{k,\infty}\right).
\label{defn:dist-C-infty}
\end{equation}

It is possible to show that this formula defines a distance on $\FF$, and that $\FF$ is a complete metric space with respect to this distance. Moreover, since the series in (\ref{defn:dist-C-infty}) is dominated by a converging series, one can show that $\theta_{n}\to\theta_{\infty}$ in $\FF$ if and only if $\partial^{\alpha}\theta_{n}\to\partial^{\alpha}\theta_{\infty}$ uniformly in $\re^{d}$ for every multi-index $\alpha$.

\paragraph{\textmd{\textit{The space of admissible velocity fields}}}

Let $\VV$ denote the set of all continuous vector fields $u:[0,+\infty)\times\re^{d}\to\re^{d}$ satisfying the following conditions.
\begin{itemize}

\item[(V1)] (Compact support in space). It turns out that $u(t,x)=0$ for every $t\geq 0$ and every $x\in\re^{d}$ with $\|x\|\geq 1$.

\item[(V2)] (Global boundedness). It turns out that $\|u(t,x)\|\leq 1$ for every $t\geq 0$ and every $x\in\re^{d}$.

\item[(V3)] (Sobolev regularity in space). For every $t\geq 0$ the function $x\to u(t,x)$ belongs to $W^{1,p}(\re^{d})$ for every $p\in[1,+\infty)$ and
\begin{equation}
\|D_{x} u(t,x)\|_{L^{p}(\re^{d})}\leq p^{4}
\qquad
\forall t\geq 0,\quad\forall p\geq 1.
\label{hp:p4}
\end{equation}

\item[(V4)] (Divergence-free condition). The vector field $u$ satisfies (\ref{eqn:div-free}) as an equality in $L^{p}(\re^{d})$ for every $t\geq 0$.

\end{itemize}

We observe that, due to Sobolev imbeddings, for every $t\geq 0$ the function $x\to u(t,x)$ is actually \holder\ continuous in $\re^{d}$ of every order $\alpha\in(0,1)$ (but of course it is not necessarily Lipschitz continuous). We consider in $\VV$ the usual distance
\begin{equation}
\dist_{\VV}(u_{1},u_{2}):=\sup\{\|u_{1}(t,x)-u_{2}(t,x)\|:t\geq 0,\ x\in\re^{d}\}
\qquad
\forall (u_{1},u_{2})\in\VV^{2},
\label{defn:dist-VV}
\end{equation}
 
Since the norms in the left-hand side of (\ref{hp:p4}) are lower semicontinuous with respect to uniform convergence, and also the divergence-free condition passes to the limit, it turns out that
$\VV$ is a complete metric space with respect to the distance (\ref{defn:dist-VV}).

\paragraph{\textmd{\textit{Our result}}}

Let us consider the product space $\XX:=\VV\times\FF$, which is a complete metric space with respect to the distance
\begin{equation}
\dist_{\XX}((u_{1},\theta_{1}),(u_{2},\theta_{2})):=\dist_{\VV}(u_{1},u_{2})+\dist_{\FF}(\theta_{1},\theta_{2}).
\nonumber
\end{equation}

We are now ready to state our main result of this section.

\begin{thm}\label{thm:transport}

Let $\XX$ be the space defined above.

Then the set of pairs $(u,\theta)\in\XX$ such that the corresponding solution $\rho$ to problem (\ref{eqn:transport})--(\ref{eqn:transport-data}) satisfies the severe derivative loss for positive times as in the last statement of Theorem~\ref{thmbibl:alberti} is residual in $\XX$.

\end{thm}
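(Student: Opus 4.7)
The strategy follows the template established in the previous sections. I would define the set of non-counterexamples $\CC$ as those pairs $(u,\theta)\in\XX$ whose solution satisfies $\rho(t_0,\cdot)\in H^{s_0}(\re^d)$ for some $t_0>0$ and $s_0>0$, and write
\begin{equation}
\CC=\bigcup_{k\geq 1}\CC_k,
\qquad
\CC_k:=\bigl\{(u,\theta)\in\XX:\ \exists\,t\in[1/k,k],\ \|\rho(t,\cdot)\|_{H^{1/k}(\re^d)}\leq k\bigr\},
\nonumber
\end{equation}
where the equality uses the continuous embedding $H^{s_1}\hookrightarrow H^{s_2}$ for $s_1\geq s_2\geq 0$. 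Proving the theorem reduces to showing that each $\CC_k$ is closed with empty interior. For closedness, I would rely on the DiPerna--Lions stability theorem: uniform convergence $u_n\to u_\infty$ in $\VV$, combined with the uniform bound (\ref{hp:p4}) and the divergence-free condition, yields $L^p_{loc}$-convergence of the velocity fields and hence convergence of the renormalized solutions $\rho_n\to\rho_\infty$ in $C([0,k];L^p_{loc})$; after extracting $t_n\to t_\infty\in[1/k,k]$, lower semicontinuity of the $H^{1/k}$ norm with respect to the resulting (weak) convergence transfers the quantitative bound to the limit.

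The heart of the argument is the proof that $\CC_{k_0}$ has empty interior. Assuming by contradiction that $B_\XX((u_0,\theta_0),\ep_0)\subseteq\CC_{k_0}$, I would first regularize the center: by mollifying $u_0$ in space and performing a localized modification of a vector potential / stream function for $u_0$, one arranges that $u_0\in C^\infty$, that $u_0$ vanishes identically on a fixed space--time cylinder $B(x_0,\rho_0)\times[0,\tau_0]$ for some $x_0\in B(0,1)$ and $\rho_0,\tau_0>0$, and that none of the inequalities in (V2) and (\ref{hp:p4}) are saturated. Let $(u_*,\theta_*,\rho_*)$ be the basic ingredient of Theorem~\ref{thmbibl:alberti-basic}, and for each large $n$ define
\begin{equation}
u_n(t,x):=u_0(t,x)+\frac{\alpha_n}{\beta_n}u_*\bigl(\alpha_n t,\beta_n(x-x_0)\bigr),
\qquad
\theta_n(x):=\theta_0(x)+\eta_n\,\theta_*\bigl(\beta_n(x-x_0)\bigr).
\nonumber
\end{equation}
The three scaling parameters are chosen so that $R/\beta_n<\rho_0$ and $\alpha_n/\beta_n\to 0$ (keeping $(u_n,\theta_n)$ in $B_\XX((u_0,\theta_0),\ep_0)$ and confining the perturbation to the cylinder where $u_0$ vanishes), so that $\alpha_n\beta_n^{-d/p}\|\nabla u_*\|_{L^p}\leq p^4$ for every $p\geq 1$ (a constraint which, when optimized at $p\sim\log\beta_n$, tolerates $\alpha_n$ of order $(\log\beta_n)^2$), and so that $\eta_n\beta_n^k\to 0$ for every integer $k\geq 0$ (ensuring $\dist_\FF(\theta_n,\theta_0)\to 0$ in the Fr\'echet topology).

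Inside the cylinder the velocity $u_n$ equals the perturbation piece alone, and the change of variable $y=\beta_n(x-x_0)$, $s=\alpha_n t$ transforms (\ref{eqn:transport}) exactly into the equation solved by $\rho_*$. By linearity of the transport equation in the initial datum at fixed velocity, the component $\rho_n^{(*)}$ with datum $\eta_n\theta_*(\beta_n(\cdot-x_0))$ coincides with $\eta_n\rho_*(\alpha_n t,\beta_n(x-x_0))$ on $[0,\tau_0]$, and the Sobolev scaling identity together with (\ref{th:rho-growth}) yields
\begin{equation}
\|\rho_n^{(*)}(\tau_0,\cdot)\|_{\dot H^{1/k_0}(\re^d)}\geq C\,\eta_n\,\beta_n^{1/k_0-d/2}\exp\bigl(c\,\alpha_n\tau_0/k_0\bigr),
\nonumber
\end{equation}
which diverges with the chosen parameters. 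After time $\tau_0$ the velocity $u_n$ reverts to the smooth $u_0$ whose flow is a Lipschitz diffeomorphism, so the $\dot H^{1/k_0}$ norm of the full solution can decrease at most by a bounded multiplicative factor on $[\tau_0,k_0]$. The complementary component $\rho_n^{(0)}$ (with datum $\theta_0$) has $H^{1/k_0}$ norm bounded uniformly in $n$ on $[0,k_0]$ by a standard energy estimate, since $u_n$ has Lipschitz constant of order $\alpha_n$ only for intrinsic time $O(1/\alpha_n)$. Combining these ingredients gives $\|\rho_n(t,\cdot)\|_{H^{1/k_0}}\to+\infty$ uniformly for $t\in[1/k_0,k_0]$, contradicting $(u_n,\theta_n)\in\CC_{k_0}$. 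The main obstacle is the initial regularization that makes $u_0$ vanish identically on a fixed space--time cylinder while preserving divergence-freeness and staying close in $\VV$; this is handled by modifying a vector potential rather than $u_0$ directly, and is the only step where the nonlocal nature of the divergence-free constraint plays a substantive role.
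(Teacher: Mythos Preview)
Your overall architecture matches the paper exactly: the same sets $\CC_k$, the same closedness argument via DiPerna--Lions stability and lower semicontinuity of the $H^{1/k}$ norm, and the same idea of perturbing by a rescaled copy of the basic ingredient $(u_*,\theta_*,\rho_*)$ from Theorem~\ref{thmbibl:alberti-basic}. Where you diverge from the paper is in the regularization of the center, and this is where your argument becomes both more complicated and genuinely gappy.

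The paper does not try to make $u_0$ vanish on a space--time cylinder. Instead it simply shrinks the supports of \emph{both} $u_0$ and $\theta_0$ by the dilation $x\mapsto x/(1-\ep)$, which is trivially compatible with the divergence-free constraint and with (\ref{hp:p4}), and then places the rescaled perturbation at a point $x_0$ in the spherical shell $\{1-\ep_1<\|x\|<1\}$. This gives supports that are disjoint in space \emph{for all time}, so the solution to the perturbed problem is exactly $\rho_0+\rho_n$ with no interaction whatsoever; the vector-potential surgery you describe as ``the main obstacle'' is simply not needed.

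Your route, by contrast, has two concrete problems. First, the sentence ``after time $\tau_0$ the velocity $u_n$ reverts to the smooth $u_0$'' is false as written: the perturbation $\frac{\alpha_n}{\beta_n}u_*(\alpha_n t,\beta_n(x-x_0))$ is present for all $t$, so unless you also cut it off in time (which you do not say) the velocity does not revert to $u_0$. Second, your bound on the component $\rho_n^{(0)}$ is not justified: you did not arrange $\theta_0$ to vanish near $x_0$, so $\rho_n^{(0)}$ is transported by a field whose Lipschitz constant is of order $\alpha_n$, and the resulting growth $\exp(C\alpha_n k_0)$ of its $H^{1/k_0}$ norm competes directly with the lower bound $\exp(c\alpha_n t/k_0)$ on $\rho_n^{(*)}$; the heuristic ``intrinsic time $O(1/\alpha_n)$'' does not resolve this. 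Both issues disappear if, like the paper, you separate the supports of $(u_0,\theta_0)$ and of the perturbation completely in space.
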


We conclude with some comments on the result, and one open problem concerning the localization in space of the derivative loss (see the comment after Theorem~\ref{thmbibl:alberti}).

\begin{rmk}
\begin{em}

For the sake of shortness, we decided to limit ourselves to velocity fields and initial data with support in the ball $B_{d}(0,1)$, and we also assumed that the norm of the velocity field is less than or equal to~1. The same proof works if we consider supports contained in any ball, and any positive bound on the norm of the velocity field, or even no bound at all.

\end{em}
\end{rmk}

\begin{rmk}
\begin{em}

Probably the strange condition (\ref{hp:p4}) deserves some comment. From a technical point of view, it is just a way to rephrase the condition ``$u\in W^{1,p}(\re^{d})$ for every $p\geq 1$'' in a quantitative way that is stable by uniform convergence on $u$. There is nothing special in the choice of $p^{4}$, and several different functions of $p$ could be considered (of course they have to diverge as $p\to +\infty$, because otherwise the velocity field would be Lipschitz continuous).

In some sense, the growth of that function measures ``the lack of Lipschitz continuity'' of $u$, with slower growth corresponding to higher regularity. It could be interesting to investigate the minimal possible growth under which Theorem~\ref{thm:transport} holds true. What is sure is that not every diverging function is allowed, because we know that the Lipschitz continuity of the velocity field is not the optimal assumption that guarantees propagation of regularity for the transport equation (see for example~\cite{2011-book-BahCheDan}).

\end{em}
\end{rmk}

\begin{open}
\begin{em}

Is it possible to find a counterexample as in Theorem~\ref{thmbibl:alberti}, with the further requirement that in the last statement the function $x\to\rho(t,x)$ does not belong to $H^{s}(\Omega)$ for every $t>0$ and $s>0$, and every open set $\Omega$ contained in its support?

\end{em}
\end{open}


\subsection{Technical preliminaries}

In this subsection we limit ourselves to restating in our setting a special instance of the classical stability result for solutions to transport equations.

\begin{lemma}[see~{\cite[Theorem~II.4]{diperna-lions}}]\label{lemma:transport-wp}

Let $\left\{(u_{n},\theta_{n})\right\}\subseteq\XX$ be a sequence such that
\begin{equation}
(u_{n},\theta_{n})\to(u_{\infty},\theta_{\infty})
\quad
\mbox{in }\XX.
\nonumber
\end{equation}

Let $\left\{\rho_{n}\right\}$ and $\rho_{\infty}$ denote the corresponding solutions to problem (\ref{eqn:transport})--(\ref{eqn:transport-data}).

Then for every $T>0$ and every $p\in[1,+\infty)$ it turns pout that
\begin{equation}
\rho_{n}\to\rho_{\infty}
\qquad
\mbox{in }C^{0}([0,T],L^{p}(\re^{d})).
\nonumber
\end{equation}

\end{lemma}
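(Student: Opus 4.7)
The plan is to invoke the stability theorem of \cite[Theorem~II.4]{diperna-lions} for transport equations with Sobolev velocity fields, verifying that our hypotheses in $\XX$ meet its standing assumptions. Specifically, I would organize the argument into three steps: uniform a priori bounds, weak compactness with identification of the limit, and an upgrade from weak to strong convergence via renormalization. The main uniform bounds come from the renormalization identity: taking $\beta(r)=|r|^{p}$ in the renormalized formulation yields the exact conservation $\|\rho_{n}(t,\cdot)\|_{L^{p}(\re^{d})}=\|\theta_{n}\|_{L^{p}(\re^{d})}$ for all $t\geq 0$ and $p\in[1,+\infty]$, and hence a uniform $L^{\infty}$ bound. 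Since $u_{n}$ vanishes outside $B_{d}(0,1)$ and $\theta_{n}$ is supported there, a comparison argument (testing against a cutoff supported outside $B_{d}(0,1)$) confines the support of $\rho_{n}(t,\cdot)$ to $B_{d}(0,1)$ for every $t\geq 0$, so $\{\rho_{n}\}$ is bounded in $L^{\infty}([0,T]\times\re^{d})$ with uniformly compact spatial support.

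The second step is to extract a weak-$\ast$ subsequential limit $\widetilde\rho$ and pass to the limit in the weak formulation of (\ref{eqn:transport})--(\ref{eqn:transport-data}), using the strong convergence $u_{n}\to u_{\infty}$ in $L^{\infty}$ and $\theta_{n}\to\theta_{\infty}$ in $\FF$ (which implies convergence in every $L^{p}$) together with the identity $u_{n}\cdot\nabla\rho_{n}=\operatorname{div}(u_{n}\rho_{n})$ to transfer derivatives onto test functions. This identifies $\widetilde\rho$ as a bounded distributional solution with velocity $u_{\infty}$ and datum $\theta_{\infty}$, and the DiPerna-Lions uniqueness theorem for bounded solutions (applicable because $u_{\infty}(t,\cdot)\in W^{1,p}(\re^{d})$ for every finite $p$ and $\operatorname{div}u_{\infty}=0$) forces $\widetilde\rho=\rho_{\infty}$, yielding convergence of the full original sequence by the usual subsequence trick.

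The third step upgrades weak-$\ast$ convergence to strong $L^{p}$ convergence. The norm conservation above gives $\|\rho_{n}(t,\cdot)\|_{L^{p}}\to\|\rho_{\infty}(t,\cdot)\|_{L^{p}}$ uniformly in $t$; combined with weak convergence in $L^{p}(B_{d}(0,1))$ (inherited from the weak-$\ast$ $L^{\infty}$ convergence on a set of finite measure) and the fact that in uniformly convex $L^{p}$ with $p\in(1,+\infty)$ weak convergence plus norm convergence imply strong convergence, this yields pointwise-in-$t$ strong $L^{p}$ convergence for every such $p$; the endpoint $p=1$ follows by interpolation with some $q\in(1,+\infty)$. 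To promote pointwise-in-$t$ convergence to convergence in $C^{0}([0,T],L^{p}(\re^{d}))$, I would exploit the equicontinuity of the family $\rho_{n}:[0,T]\to W^{-1,p}(\re^{d})$, which is uniform because $\|\partial_{t}\rho_{n}\|_{W^{-1,p}}\leq\|u_{n}\rho_{n}\|_{L^{p}}\leq C$, and combine it with the uniform $L^{\infty}$ bound through an Arzelà-Ascoli type argument.

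The principal technical obstacle is the rigorous justification of renormalization and uniqueness when $u_{n}$ is only Sobolev and not Lipschitz: one must regularize $\rho_{n}$ by spatial convolution with a mollifier $\eta_{\varepsilon}$, apply the chain rule to the mollified equation, and show that the commutator $[u_{n}\cdot\nabla,\eta_{\varepsilon}\ast]\rho_{n}$ tends to zero in $L^{1}_{\mathrm{loc}}$ as $\varepsilon\to 0$. This commutator estimate, which is the core contribution of \cite{diperna-lions}, uses the $W^{1,p}$ regularity of $u_{n}$ and is available here for every $p\in(1,+\infty)$ thanks to condition (\ref{hp:p4}). Once it is in place, all the steps above follow in a now-standard manner, and the stability conclusion reduces to a careful bookkeeping of the uniform bounds that our functional space $\XX$ is explicitly designed to provide.
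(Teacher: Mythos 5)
Your proposal takes essentially the same approach as the paper, which proves nothing here: Lemma~\ref{lemma:transport-wp} is stated as a direct citation of \cite[Theorem~II.4]{diperna-lions}, with the subsection explicitly saying that the authors ``limit ourselves to restating in our setting a special instance of the classical stability result.'' You also open by invoking that theorem and then go further by sketching its internal machinery (renormalization, commutator estimate, weak compactness, identification of the limit via uniqueness, upgrade to strong convergence via norm conservation plus the Radon--Riesz property, and equicontinuity in $W^{-1,p}$), which is a correct account of the DiPerna--Lions argument and consistent with the hypotheses encoded in $\XX$.

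One small remark on the last step: going from pointwise-in-$t$ strong $L^{p}$ convergence to convergence in $C^{0}([0,T],L^{p}(\re^{d}))$ is not literally an Arzel\`a--Ascoli compactness argument, since equicontinuity holds only with values in $W^{-1,p}$ and one has no compact embedding into $L^{p}$ from the uniform $L^{\infty}$ bound alone. The standard route (and the one in \cite{diperna-lions}) is to observe that the $L^{p}$ norm of $\rho_{n}(t,\cdot)$ is constant in $t$ and converges to that of $\rho_{\infty}$, while the $W^{-1,p}$-equicontinuity gives $\rho_{n}\to\rho_{\infty}$ in $C^{0}([0,T],L^{p}\mbox{-weak})$; a contradiction argument on a sequence of bad times $t_{n}\to t_{*}$ then combines these two facts to force uniform strong convergence. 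Your sketch contains all the needed ingredients; only the label ``Arzel\`a--Ascoli'' is slightly off.
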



\subsection{Proof of Theorem~\ref{thm:transport}}

\paragraph{\textmd{\textit{Quantitative non-pathological behavior}}}

Let $\CC$ denote the set of ``non-counterexamples'', namely the set of all pairs $(u,\theta)\in\XX$ such that the corresponding solution $\rho$ to problem (\ref{eqn:transport})--(\ref{eqn:transport-data}) satisfies
\begin{equation}
\exists t>0\quad\exists s>0
\qquad
\|\rho(t,x)\|_{H^{s}(\re^{d})}<+\infty.
\label{defn:transport-CC}
\end{equation}

We have to show that $\CC$ is a countable union of closed sets with empty interior. To this end, we state in a quantitative way the definition of $\CC$. For every positive integer $k$, we consider the set $\CC_{k}$ of all pairs $(u,\theta)\in\XX$ such that the corresponding solution $\rho$ to (\ref{eqn:wave})--(\ref{eqn:data}) satisfies
\begin{equation}
\exists t\in\left[1/k,k\right]
\qquad\quad
\|\rho(t,x)\|_{H^{1/k}(\re^{d})}\leq k.
\nonumber
\end{equation}

In words, now $t$ is confined in a compact set away from~0, we have fixed $s=1/k$, and we have prescribed a bound on the norm in $H^{s}(\re^{d})$ of the function $x\to\rho(t,x)$. 

The proof is complete if we show that $\CC$ is the union of all $\CC_{k}$'s, and that $\CC_{k}$ is a closed set with empty interior for every positive integer $k$.

\paragraph{\textmd{\textit{The set $\CC$ is the union of all $\CC_{k}$'s}}}

Let $(u,\theta)\in\CC$, so that the corresponding solution to problem (\ref{eqn:transport})--(\ref{eqn:transport-data}) satisfies (\ref{defn:transport-CC}). Then it turns out that $(u,\theta)\in\CC_{k}$ provided that we choose $k$ such that
\begin{equation}
\frac{1}{k}\leq t\leq k,
\qquad
\frac{1}{k}\leq s,
\qquad
\|\rho(t,x)\|_{H^{1/k}(\re^{d})}\leq k.
\nonumber
\end{equation}

We just need to observe that, if the function $x\to\rho(t,x)$ belongs to the space $H^{s}(\re^{d})$, then it belongs as well to the space $H^{\sigma}(\re^{d})$ for every $\sigma\in(0,s]$, and
\begin{equation}
\|\rho(t,x)\|_{H^{\sigma}(\re^{d})}\leq\|\rho(t,x)\|_{H^{s}(\re^{d})}
\qquad
\forall\sigma\in(0,s]. 
\nonumber
\end{equation}

\paragraph{\textmd{\textit{The set $\CC_{k}$ is closed}}}

Let $k$ be a \emph{fixed} positive integer. Let $\{(u_{n},\theta_{n})\}\subseteq\CC_{k}$ be any sequence, and let us assume that $(u_{n},\theta_{n})\to(u_{\infty},\theta_{\infty})$ in $\XX$, namely that $u_{n}\to u_{\infty}$ uniformly in $[0,+\infty)\times\re^{d}$, and $\theta_{n}\to\theta_{\infty}$ uniformly in $\re^{d}$ with all its derivatives. We claim that $(u_{\infty},\theta_{\infty})\in\CC_{k}$.

From the definition of $\CC_{k}$ we know that for every $n$ there exists $t_{n}\in[1/k,k]$ such that the solution $\rho_{n}$ to the transport equation (\ref{eqn:transport})--(\ref{eqn:transport-data}) with velocity field $u:=u_{n}$ and initial datum $\theta:=\theta_{n}$ satisfies
\begin{equation}
\|\rho_{n}(t_{n},x)\|_{H^{1/k}(\re^{d})}\leq k.
\nonumber
\end{equation}

Up to subsequences (not relabeled) we can always assume that $t_{n}\to t_{\infty}\in[1/k,k]$. Let $\rho_{\infty}$ be the solution to problem (\ref{eqn:transport})--(\ref{eqn:transport-data})  with velocity field $u:=u_{\infty}$ and initial datum $\theta:=\theta_{\infty}$. From Lemma~\ref{lemma:transport-wp} we deduce that 
\begin{equation}
\rho_{n}(t_{n},x)\to\rho_{\infty}(t_{\infty},x)
\quad
\mbox{in }L^{p}(\re^{d})
\qquad
\forall p\in[1,+\infty).
\nonumber
\end{equation}

Since the norm in $H^{s}$ is lower semicontinuous with respect to $L^{p}$ convergence, this proves that $\|\rho_{\infty}(t_{\infty},x)\|_{H^{1/k}(\re^{d})}\leq k$, as required.

\paragraph{\textmd{\textit{The set $\CC_{k}$ has empty interior}}}

Let us assume that there exist an integer $k_{0}\geq 1$, a pair $(u_{0},\theta_{0})\in\XX$, and a real number $\ep_{0}>0$ such that $B_{\XX}((u_{0},\theta_{0}),\ep_{0})\subseteq\CC_{k_{0}}$. 

\subparagraph{\textmd{\textit{Regularization of the center}}}

Up to a small modification of $u_{0}$ and $\theta_{0}$, and a small reduction of the radius $\ep_{0}$, we can assume that $u_{0}$ and $\theta_{0}$ have the following further properties.
\begin{itemize}

\item  They are smooth enough in the space variables (in this case it is enough to assume $C^{1}$ regularity in space, uniform with respect to time).

\item  Their support is contained in the ball $B_{d}(0,1-\ep_{1})$ for some $\ep_{1}\in(0,1)$, namely
\begin{equation}
\|u_{0}(t,x)\|+|\theta_{0}(t,x)|=0
\qquad
\forall t\geq 0,\quad\forall x\in\re^{d}\setminus B_{d}(0,1-\ep_{1}).
\label{sat:support}
\end{equation}

\item  The velocity field does not saturate the inequalities in (\ref{hp:p4}), namely
\begin{equation}
\|D_{x} u_{0}(t,x)\|_{L^{p}(\re^{d})}\leq(1-\ep_{1})p^{4}
\qquad
\forall t\geq 0,\quad\forall p\geq 1.
\label{sat:up}
\end{equation}

\end{itemize}

As a consequence, the corresponding solution $\rho_{0}$ is of class $C^{1}$ in space, uniformly with respect to time, and its support is contained in the same ball $B_{d}(0,1-\ep_{1})$.

Such an approximation can be achieved in three steps. In a first step we replace $u_{0}$ and $\theta_{0}$ with
\begin{equation}
(1-\ep)u_{0}\left(t,\frac{x}{1-\ep}\right)
\qquad\mbox{and}\qquad
\theta_{0}\left(\frac{x}{1-\ep}\right).
\nonumber
\end{equation}

If $\ep>0$ is small enough, this operation produces elements of $\XX$ as close as we want to $(u_{0},\theta_{0})$, and with a smaller support. In a second step we perform a convolution in the space variables. This procedure regularizes the functions, without enlarging too much the support, and without altering the conditions in (V2), (V3), (V4). Finally, in the third step we multiply again the velocity field by $1-\ep$ for some small enough $\ep>0$ (possibly different from the first step) in order to fulfill condition (\ref{sat:up}).

\subparagraph{\textmd{\textit{Use of rescaled basic ingredient}}}

Let us choose a point $x_{0}\in\re^{d}$ such that 
\begin{equation}
1-\ep_{1}<\|x_{0}\|<1.
\label{defn:x0}
\end{equation}

The idea is to modify $u_{0}$ and $\theta_{0}$ by attaching, in a small neighborhood of $x_{0}$, a suitable rescaling of the vector field $u_{*}$ and the function $\theta_{*}$ provided by Theorem~\ref{thmbibl:alberti-basic}. The goal is that the modified pair still belongs to $B_{\XX}((u_{0},\theta_{0}),\ep_{0})$, but it does not belong to $\CC_{k_{0}}$, thus providing a contradiction. To this end, we set
\begin{equation}
u_{n}(t,x):=n\lambda_{n}u_{*}\left(nt,\frac{x-x_{0}}{\lambda_{n}}\right)
\qquad
\forall t\geq 0,\quad\forall x\in\re^{d}
\label{defn:un-alberti}
\end{equation}
and
\begin{equation}
\theta_{n}(t,x):=\gamma_{n}\theta_{*}\left(\frac{x-x_{0}}{\lambda_{n}}\right)
\qquad
\forall x\in\re^{d},
\nonumber
\end{equation}
where $\lambda_{n}$ and $\gamma_{n}$ are two sequences of real numbers that satisfy
\begin{equation}
n\lambda_{n}\to 0,
\qquad\qquad
\frac{\gamma_{n}}{\lambda_{n}^{\alpha}}\to 0,
\qquad\qquad
\gamma_{n}\lambda_{n}^{\alpha}\exp(\beta n)\to+\infty
\label{defn:lgn}
\end{equation}
for every positive value of the parameters $\alpha$ and $\beta$, and for which there exists a constant $\ell_{d}$, depending only on the dimension, such that
\begin{equation}
n^{2}\lambda_{n}^{d/p}\leq\ell_{d}\,p^{4}
\qquad
\forall p\geq 1.
\label{defn:lgn-p}
\end{equation}

As an example, we can take
\begin{equation}
\lambda_{n}:=\exp\left(-n^{1/2}\right)
\qquad\mbox{and}\qquad
\gamma_{n}:=\exp\left(-n^{2/3}\right).
\nonumber
\end{equation}

We observe that the solution to problem (\ref{eqn:transport})--(\ref{eqn:transport-data}) with $u:=u_{n}$ and $\theta:=\theta_{n}$ is the function defined by
\begin{equation}
\rho_{n}(t,x):=\gamma_{n}\rho_{*}\left(nt,\frac{x-x_{0}}{\lambda_{n}}\right)\qquad
\forall t\geq 0,\quad\forall x\in\re^{d},
\nonumber
\end{equation}
where $\rho_{*}$ is the same as in Theorem~\ref{thmbibl:alberti-basic}.

\subparagraph{\textmd{\textit{Final contradiction: $(u_{0}+u_{n},\theta_{0}+\theta_{n})\in B_{\XX}((u_{0},\theta_{0}),\ep_{0})$ for $n$ large enough}}}

This requires some steps.

\begin{itemize}

\item  To begin with, we observe that $\lambda_{n}\to 0$. Since the support of $u_{n}$ and $\theta_{n}$ is contained in $B_{d}(x_{0},R\lambda_{n})$, from (\ref{sat:support}) and (\ref{defn:x0}) we deduce that, when $n$ is large enough, the support of $u_{0}$ and $\theta_{0}$ is disjoint from the support of $u_{n}$ and $\theta_{n}$, and therefore $\rho_{0}+\rho_{n}$ is the solution to the transport equation (\ref{eqn:transport}) with velocity field $u_{0}+u_{n}$ and initial condition $\theta_{0}+\theta_{n}$.

\item  We show that $\theta_{n}\to 0$ in $\FF$. Indeed, for every multi-index $\alpha$ it turns out that
\begin{equation}
\partial^{\alpha}\theta_{n}(x)=\frac{\gamma_{n}}{\lambda_{n}^{|\alpha|}}\cdot\partial^{\alpha}\theta_{*}\left(\frac{x-x_{0}}{\lambda_{n}}\right)
\qquad
\forall x\in\re^{d},
\nonumber
\end{equation}
and hence from the second condition in (\ref{defn:lgn}) it follows that $\partial^{\alpha}\theta_{n}\to 0$ uniformly in $\re^{d}$. This is equivalent to saying that $\theta_{n}\to 0$ in $\FF$. 

\item  We show that $u_{n}\to 0$ uniformly in $[0,+\infty)\times\re^{d}$. This will imply that $u_{0}+u_{n}\to u_{0}$ in $\VV$ as soon as we show that $u_{0}+u_{n}\in\VV$.

Indeed, from (\ref{defn:un-alberti}) and (\ref{th:bounds-ur}) it follows that
\begin{equation}
\|u_{n}(t,x)\|\leq Mn\lambda_{n}
\qquad
\forall t\geq 0,\quad\forall x\in\re^{d},
\label{est:un-unif}
\end{equation}
which implies the uniform convergence because of the first condition in (\ref{defn:lgn}).

\item  We show that $u_{0}+u_{n}\in\VV$ when $n$ is large enough.
\begin{itemize}

\item To begin with, we observe that conditions (V1) and (V4) follow from the corresponding properties of $u_{0}$ and $u_{n}$.

\item  Let us check condition (V2). Since $u_{0}$ already satisfies (V2), when the support of $u_{0}$ and $u_{n}$ is disjoint it is enough to verify that $\|u_{n}(t,x)\|\leq 1$ for all admissible values of $t$ and $x$. For $n$ large enough this follows from (\ref{est:un-unif}) and from the first condition in (\ref{defn:lgn}).

\item  Let us check condition (V3). From (\ref{defn:un-alberti}) it follows that
\begin{equation}
D_{x} u_{n}(t,x)=nD_{x} u_{*}\left(nt,\frac{x-x_{0}}{\lambda_{n}}\right)
\qquad
\forall t\geq 0,\quad\forall x\in\re^{d},
\nonumber
\end{equation}
and therefore from (\ref{th:bounds-ur}) we obtain that
\begin{equation}
\|D_{x} u_{n}(t,x)\|_{L^{p}(\re^{d})}\leq Mn\left(\omega_{d}R^{d}\lambda_{n}^{d}\right)^{1/p},
\nonumber
\end{equation}
where $\omega_{d}$ denotes the Lebesgue measure of the unit ball in $\re^{d}$. Keeping (\ref{defn:lgn-p}) into account, we deduce that
\begin{equation}
\|D_{x} u_{n}(t,x)\|_{L^{p}(\re^{d})}\leq\frac{M\ell_{d}(\omega_{d}R^{d})^{1/p}}{n}p^{4}
\qquad
\forall p\geq 1,
\nonumber
\end{equation}
and therefore from (\ref{sat:up}) we conclude that
\begin{eqnarray}
\|D_{x}(u_{0}+u_{n})(t,x)\|_{L^{p}(\re^{d})} & \leq & \|D_{x} u_{0}(t,x)\|_{L^{p}(\re^{d})}+\|D_{x} u_{n}(t,x)\|_{L^{p}(\re^{d})}  \nonumber  \\[1ex]
& \leq & \left(1-\ep_{1}+\frac{M\ell_{d}(\omega_{d}R^{d})^{1/p}}{n}\right)p^{4}.
\nonumber
\end{eqnarray}

When $n$ is large enough, the last term is less than or equal to $p^{4}$, uniformly with respect to $p\geq 1$, as required.

\end{itemize}

\end{itemize}

\subparagraph{\textmd{\textit{Final contradiction: $(u_{0}+u_{n},\theta_{0}+\theta_{n})\not\in C_{k_{0}}$ for $n$ large enough}}}

From the scaling properties of the norm in homogeneous space $\dot{H}^{s}$ we know that
\begin{equation}
\|\rho_{n}(t,x)\|_{\dot{H}^{s}(\re^{d})}=\gamma_{n}\lambda_{n}^{d/2}\lambda_{n}^{-s}\|\rho_{*}(nt,x)\|_{\dot{H}^{s}(\re^{d})}
\qquad
\forall t\geq 0.
\nonumber
\end{equation}

On the other hand, from (\ref{th:rho-growth}) we know that
\begin{equation}
\|\rho_{*}(nt,x)\|_{\dot{H}^{s}(\re^{d})}\geq C_{s}\exp(csnt)
\qquad
\forall t\geq 0,\quad\forall s\in(0,2).
\nonumber
\end{equation}

Now we set $s=1/k_{0}$. Combining these two inequalities, and keeping into account that $t\geq 1/k_{0}$ and $\lambda_{n}\leq 1$, we deduce that
\begin{equation}
\|\rho_{n}(t,x)\|_{\dot{H}^{1/k_{0}}(\re^{d})}\geq\gamma_{n}\lambda_{n}^{d/2}C_{1/k_{0}}\exp\left(\frac{c}{k_{0}^{2}}n\right)
\qquad
\forall t\geq\frac{1}{k_{0}}.
\nonumber
\end{equation}

Now we observe that
\begin{equation}
\|(\rho_{0}+\rho_{n})(t,x)\|_{\dot{H}^{1/k_{0}}(\re^{d})}\geq\|\rho_{n}(t,x)\||_{\dot{H}^{1/k_{0}}(\re^{d})}-\|\rho_{0}(t,x)\||_{\dot{H}^{1/k_{0}}(\re^{d})},
\nonumber
\end{equation}
and we recall that $\rho_{0}$ is smooth in the space variables, uniformly in time, and therefore its norm in the space $\dot{H}^{1/k_{0}}$ is uniformly bounded from above by a constant $\Gamma_{0}$ when $t\in[1/k_{0},k_{0}]$. It follows that
\begin{equation}
\|(\rho_{0}+\rho_{n})(t,x)\|_{\dot{H}^{1/k_{0}}(\re^{d})}\geq C_{1/k_{0}}\gamma_{n}\lambda_{n}^{d/2}\exp\left(\frac{c}{k_{0}^{2}}n\right)-\Gamma_{0}
\qquad
\forall t\in\left[\frac{1}{k_{0}},k_{0}\right].
\nonumber
\end{equation}

Due to the last condition in (\ref{defn:lgn}), the right-hand side tends to $+\infty$ as $n\to +\infty$, and therefore the left-hand side is larger than $k_{0}$, uniformly in $t\in[1/k_{0},k_{0}]$, when $n$ is large enough. This completes the proof.

\subsubsection*{\centering Acknowledgments}

The emphasis on qualitative vs quantitative statements (in particular within an approach based on Baire cathegory theorem) was inspired by reading the scientific work and the blog by Terence Tao. For this reason, we are grateful to him.

Both authors are members of the \selectlanguage{italian} ``Gruppo Nazionale per l'Analisi Matematica, la Probabilit\`{a} e le loro Applicazioni'' (GNAMPA) of the ``Istituto Nazionale di Alta Matematica'' (INdAM). 

\selectlanguage{english}



\label{NumeroPagine}

\end{document}